\newtheorem*{thm}{Theorem}
\newtheorem{prop}{Proposition}
\newtheorem{lem}{Lemma}
\newtheorem{rem}{Remark}
\newtheorem{defin}{Definition}
\newtheorem{question}{Question}
\newtheorem{example}{Example}
\title{On the minimality of semigroup actions 
on the interval which are $C^1$-close to the identity}
\author{Katsutoshi Shinohara \thanks{Universit\'{e} de Bourgogne.}}
\date{}
\begin{document}

\maketitle

\begin{abstract}
We consider semigroup actions on 
the interval generated by two attracting maps. 
It is known that if the generators are sufficiently $C^2$-close
to the identity, then the minimal set coincides with the whole interval.
In this article, we give a counterexample to this result 
under $C^1$-topology.

{ \smallskip
\noindent \textbf{Keywords:} 
Skew product, 
Duminy's theorem, 
minimality. 

\noindent \textbf{2010 Mathematics Subject Classification:} 
 37C70,   37D30,  37E05,  37H20,  57R30  }
\end{abstract}

\section{Introduction}

In dynamical systems, the study of semigroup actions,
in other words, iterated function systems (IFSs), 
are interesting subjects not only in themselves, but also
from the viewpoint of study of systems generated by a single map.
For example, in the study of  properties of 
dynamically defined invariant sets, 
such as the unstable manifold of a hyperbolic set,
a difficulty comes from the point that they often exhibit 
complicated fractal structures.
A natural and powerful way to analyze  them is to reduce the original
system to IFSs, which enables us to command the 
deep theories developed in fractal geometry (see \cite{PT}).

There is another motivation of this kind. It comes from
their relationship with partially hyperbolic systems, 
which also attracts the attention of recent research. 
One typical example of such systems is given 
by skew product systems of IFSs (see \cite{DG, IlN}).  
Thus study of IFSs are expected to contribute to the 
study of partially hyperbolic systems. 
It is true that skew product systems are ``too clean" compared to 
general partially hyperbolic systems: The regularity 
of the holonomies among fibers in general systems is very low, 
while in skew product systems it is given by the identity map. 
Nonetheless, the study of IFSs from this viewpoint would contribute
to the study of partially hyperbolic systems to offer 
a ``first step" of investigations of such kind.

There are many researches to this direction, 
in particular under the setting where the ambient space is a one dimensional manifold
(see for example \cite{ABY, BR1, BR2, DGR, KV, IlN, BR}). 
Very recently, under some generic assumptions, 
``spectral decomposition theorems for IFSs" 
were established by several researchers
(see for example \cite{BR, KV}).
Thus the next natural question would be to investigate the 
properties of each ``basic pieces'' that appear in such 
decompositions. 
One interesting thing in relation to this decomposition 
is that it is reported that if the systems has 
good regularity, then there are strange restrictions on them. 
The aim of this article is to obtain a better understanding about such restrictions. 

To explain the strange restrictions, we restrict our attention to a special setting.
Let $I := [0,1]$ and $f, g : I \to I$ be continuous maps satisfying the following conditions:
\begin{itemize}
\item $f(0) = 0$, $g(1)= 1$.
\item For $x \neq 0$,  $ 0<f(x)<x$ and for $x \neq 1$,  $x < g(x) < 1$.
\item $f$, $g$ are orientation preserving homeomorphisms on their images.
\end{itemize}
We denote the set of pairs of $(f, g)$ satisfying conditions above 
by $\mathcal{C}$. Furthermore, we denote the set of $(f, g)$ such that
$f, g$ are $C^1$-diffeomorphisms on their images satisfying
$ f'(0), g'(1) \in (0, 1)$ by $\mathcal{D}$.
Under this setting, we are interested in the property of {\it forward minimal sets}
of semigroup actions on $I$ generated by $f$ and $g$. 
A non-empty set $M \subset I$ is called a \emph{(forward) minimal set} if 
for every point in $M$, the forward orbit is dense in $M$.
We say that a semigroup action is {\it minimal} if the ambient space itself is a 
minimal set, in other words, every point has a dense orbit.
The reason why we assigned $f, g$ the above conditions is that 
this is the ``simplest'' case for the study of minimal set. 
The importance of the minimal set comes from the 
fact that they corresponds to homoclinic classes in the 
ambient space (see Section \ref{s.basic} for detail).

In \cite{BR}, 
the following property proved by Duminy is discussed (see also section 3.3 of \cite{N}). 
\begin{prop}[Duminy]\label{p.dum}
If $(f, g)$ are sufficiently $C^2$-close to the identity, 
the forward minimal set of $(f, g)$-action is equal to $I$.
In other words, every point has a dense orbit.
\end{prop}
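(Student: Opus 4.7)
The plan is to show directly that every forward orbit is dense in $I$, using the contracting character of $f, g$ toward the endpoints together with distortion control inherited from the $C^2$-closeness to the identity. Fixing $x \in I$, a target $y \in (0,1)$, and $\varepsilon > 0$, I would aim to produce a word $w \in \langle f, g\rangle$ with $|w(x) - y| < \varepsilon$. A preliminary application of $f^N$ for large $N$ places $f^N(x)$ arbitrarily close to $0$; iterating $g$ from there produces a sequence converging to $1$ whose consecutive gaps are bounded by $\delta_g := \sup_x(g(x) - x)$. This immediately gives a $\delta_g$-dense orbit; the real work is refining this to $\varepsilon$-density for arbitrary $\varepsilon > 0$.

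The $C^2$ hypothesis enters via the standard distortion estimate: for a composition $w = h_m \circ \cdots \circ h_1$ with $h_i \in \{f, g\}$ and any interval $J \subset I$,
\[
\sup_{u, v \in J}\Bigl|\log \frac{w'(u)}{w'(v)}\Bigr| \;\le\; K \sum_{i=0}^{m-1}\bigl|(h_i \circ \cdots \circ h_1)(J)\bigr|,
\]
where $K$ depends on $\max(\|f''/f'\|_\infty, \|g''/g'\|_\infty)$ and is therefore small when $(f, g)$ is $C^2$-close to identity. Choosing $w$ so that the partial iterates $(h_i \circ \cdots \circ h_1)(J)$ lie in essentially disjoint subintervals of $I$ gives a sum bounded by $|I| = 1$, yielding a uniform distortion bound: $w|_J$ behaves like an affine contraction of ratio $|w(J)|/|J|$ up to a multiplicative constant.

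With this distortion control in hand, one can exploit the ``renormalized'' dynamics near $0$: composing $f^N$ (a near-linear contraction with ratio approximately $f'(0)^N$) with $g^k$ for varying $k$, and then further applications of $g$, one realizes in the semigroup orbit of $x$ a set of points whose spacing scales linearly with $|f^N(I)|$. Choosing $N$ large enough that $|f^N(I)| < \varepsilon$ yields $\varepsilon$-density in a small neighborhood of $0$, and subsequent applications of $g$ then propagate this density throughout $I$ by the $\delta_g$-dense sweep argument applied at the refined scale.

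The principal obstacle is the combinatorial selection of the words $w$ whose partial iterates remain disjoint in $I$, which is needed to invoke the distortion bound with right-hand side bounded by $|I|$. In the classical Denjoy argument for a single irrational rotation, this is arranged via continued-fraction approximants; in our two-generator setting the free semigroup structure provides flexibility, but one must still verify the disjointness carefully at every intermediate stage. It is precisely this disjointness mechanism, supported by the nonlinearity bound coming from the $C^2$-norm, that fails at mere $C^1$ regularity, which is what opens the door to the counterexample constructed in the rest of the paper.
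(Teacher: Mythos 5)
The paper does not prove Proposition~\ref{p.dum}; it states it as a known result of Duminy, pointing to \cite{BR} and to Section~3.3 of \cite{N}. So there is no internal proof to compare against, and your proposal has to be judged on its own.

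As written, it has a real gap at its core, precisely where you flag the ``principal obstacle,'' but the situation is worse than a missing verification. The Denjoy-type bound
\[
\sup_{u,v\in J}\Bigl|\log\frac{w'(u)}{w'(v)}\Bigr|\le K\sum_{i}\bigl|(h_i\circ\cdots\circ h_1)(J)\bigr|
\]
is only useful when the right-hand side is controlled, and the standard way to control it is that the partial images $(h_i\circ\cdots\circ h_1)(J)$ are pairwise disjoint, so the sum is bounded by $|I|$. In the classical Schwartz--Sacksteder--Duminy setting this disjointness is automatic because one argues by contradiction, iterating a \emph{gap} of a putative exceptional minimal set; distinct images of a gap stay disjoint. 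Your proposal is a direct density argument with no contradiction hypothesis, and for the words you actually describe---$f^N$ followed by blocks of $g$'s---the partial images are nested (under repeated $f$) or overlapping (under repeated $g$ when the translation step is smaller than $|f^N(J)|$). There is no mechanism in your proposal that produces the disjointness, so the distortion bound is simply not available along those words.

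There is a second, related gap. The sentence asserting that composing $f^N$ with $g^k$ ``realizes in the semigroup orbit of $x$ a set of points whose spacing scales linearly with $|f^N(I)|$'' is not justified. The points $g^k(f^N(x))$, $k\ge 0$, have spacing governed by $\sup_x(g(x)-x)$, not by $|f^N(I)|$; enlarging $N$ does not by itself refine the net, it only moves the starting point closer to $0$. To refine the net one has to push an already $\delta_g$-dense orbit segment \emph{through} $f^N$ and then spread the resulting fine configuration back across $I$ by further words, and the distortion incurred by those further words is exactly what is unestimated. Finally, the hypothesis that $(f,g)$ is $C^2$-\emph{close to the identity}, rather than merely $C^2$, enters your argument only through the smallness of the distortion constant $K$; but the Duminy-type mechanism also needs this smallness to beat the geometry of the gaps of a hypothetical exceptional minimal set, and a direct density argument never produces such a gap to estimate against. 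In short, the sketch identifies the right tool (bounded distortion from $C^2$-control) but does not set up the situation in which that tool applies, and the intended conclusion does not follow from the steps given.
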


The aim of this paper is 
to consider the similar problem under the $C^1$-topology.
The conclusion is that in $C^1$ case, we can construct a counterexample. 
More precisely, we prove the following.
\begin{thm}
There exists a sequence $(f_n, g_n) \subset \mathcal{D}$ 
converging to $(\mathrm{id}, \mathrm{id})$ in the $C^1$-topology 
such that $(f_n, g_n)$ has minimal set that does not equal to $I$.
\end{thm}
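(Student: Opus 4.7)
The plan is to adapt a Denjoy--Pixton blow-up construction to the semigroup setting. For each $n$, I start with a pair $(\tilde f_n, \tilde g_n) \in \mathcal{D}$ sufficiently $C^2$-close to the identity that Proposition \ref{p.dum} applies: the $(\tilde f_n, \tilde g_n)$-action is minimal on $I$. I then blow up a single dense forward semigroup orbit into a countable family of small open intervals, producing $(f_n, g_n) \in \mathcal{D}$ whose dynamics admits a proper forward-invariant Cantor set.

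Concretely, take $(\tilde f_n, \tilde g_n)$ at $C^2$-distance $\delta_n \to 0$ from the identity and pick $p_n \in (0, 1)$; by Proposition \ref{p.dum}, its forward orbit $\{q_{n,k}\}_{k\in\mathbb{N}}$ is dense in $I$. Assign each $q_{n,k}$ a length $\ell_{n,k} > 0$ with $\sum_k \ell_{n,k} \to 0$ as $n \to \infty$, blow up $I$ by inserting an open interval $J_{n,k}$ of length $\ell_{n,k}$ at each $q_{n,k}$, and rescale to total length $1$. Off the blow-ups, define $f_n$ and $g_n$ by conjugating $\tilde f_n$ and $\tilde g_n$ through the collapse map $\pi$. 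On each $J_{n,k}$, set $f_n$ (resp.\ $g_n$) to be the orientation-preserving affine map onto $J_{n,k^{f}}$ (resp.\ $J_{n,k^{g}}$), where $q_{n,k^{f}} = \tilde f_n(q_{n,k})$ and $q_{n,k^{g}} = \tilde g_n(q_{n,k})$; smooth corners on thin collars so that $(f_n, g_n) \in \mathcal{D}$.

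The Cantor complement $K_n := I \setminus \bigsqcup_k J_{n,k}$ is then forward invariant (gaps map to gaps by construction) and a proper subset of $I$. By compactness and Zorn's lemma, $K_n$ contains a minimal subset for $(f_n, g_n)$, which is thus strictly smaller than $I$. For $C^1$-convergence of $(f_n, g_n)$ to $(\mathrm{id}, \mathrm{id})$: the $C^0$-displacement is bounded by $\|\tilde f_n - \mathrm{id}\|_{C^0} + O(\sum_k \ell_{n,k})$, which tends to zero; the derivative equals $\tilde f_n'$ off the gaps and the affine slope $\ell_{n,k^{f}}/\ell_{n,k}$ on each $J_{n,k}$. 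A Pixton-style choice of the $\ell_{n,k}$ makes both uniformly close to $1$, giving $\|f_n - \mathrm{id}\|_{C^1} \to 0$ and similarly for $g_n$.

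The main obstacle is the joint $C^1$-estimate: both $f_n$ and $g_n$ act on the same family $\{J_{n,k}\}$, and each prescribes a separate slope ratio $\ell_{n,k^{f}}/\ell_{n,k}$ or $\ell_{n,k^{g}}/\ell_{n,k}$ on every gap. The lengths $\ell_{n,k}$ must therefore satisfy a Pixton-type compatibility criterion for two maps simultaneously along a binary-tree--indexed orbit, rather than the linear $\mathbb{Z}$-indexed orbit that appears in the classical single-diffeomorphism setting. Solving this compatibility system on the $\ell_{n,k}$ is the technical heart of the proof, and it is exactly where working in $C^1$ (without any H\"older or $C^{1+\alpha}$ control on $\log f_n'$) becomes essential in order to bypass the cross-ratio obstruction underlying Duminy's $C^2$-theorem.
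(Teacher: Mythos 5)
Your proposal contains a genuine obstruction that is not merely technical. For a generic pair $(\tilde f_n, \tilde g_n)$ $C^2$-close to the identity and a generic $p_n$, the word-evaluation map $w\mapsto w(p_n)$ from the free semigroup on two generators is injective, so the forward orbit carries the structure of a rooted binary tree with $2^m$ points at depth $m$. If the slope constraints $\ell_{n,k^f}/\ell_{n,k}\geq 1-\varepsilon$ and $\ell_{n,k^g}/\ell_{n,k}\geq 1-\varepsilon$ held for every gap, then summing over a depth level gives
\[
\sum_{|k|=m+1}\ell_{n,k}=\sum_{|k|=m}\bigl(\ell_{n,k^f}+\ell_{n,k^g}\bigr)\geq 2(1-\varepsilon)\sum_{|k|=m}\ell_{n,k},
\]
so $\sum_{|k|=m}\ell_{n,k}\geq (2(1-\varepsilon))^m\ell_{n,\mathrm{root}}$, which diverges for any $\varepsilon<1/2$ and contradicts the finiteness of the total inserted length. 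Thus no Pixton-style choice of lengths gives $\|f_n'-1\|_\infty\to 0$; you cannot even reach $\varepsilon=0.49$. The classical Denjoy--Pixton trick relies on a $\mathbb{Z}$-indexed orbit with linear growth (permitting $\ell_m\sim 1/m^2$); a free two-generator orbit has exponential growth and is qualitatively incompatible with slopes uniformly near $1$. Escaping this would require the orbit of $p_n$ to carry enough word relations to force polynomial growth, which is neither generic nor secured by invoking Proposition~\ref{p.dum}.

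There is also a logical slip in the invariance argument: ``gaps map to gaps'' gives $f_n(G_n)\subset G_n$ for $G_n := \bigsqcup_k J_{n,k}$, whereas forward invariance of $K_n = I\setminus G_n$ is the statement $f_n^{-1}(G_n)\subset G_n$, i.e.\ preimages of gaps are gaps --- precisely the hiding-region condition of the paper. Blowing up only the \emph{forward} orbit violates this: the gap inserted at $p_n$ itself, and at every orbit point $w(p_n)$ whose word $w$ does not begin with $\tilde f_n$ (resp.\ $\tilde g_n$), has an $f_n$- (resp.\ $g_n$-) preimage that is generically not a gap, so the collapse conjugacy does not even produce a well-defined continuous map on $K_n$. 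Repairing this requires blowing up the full grand orbit, which only worsens the counting above. The paper avoids both problems by a wholly different route: it builds piecewise-linear $(f,g)$ from scratch so that away from explicit contraction zones they are literal translations $x\mapsto x-1$ and $x\mapsto x+p/q$; there orbits live on a one-dimensional lattice with linear growth, and the Farey combinatorics (Propositions~\ref{p.goodrat} and~\ref{p.exquant}) together with the runway/connector/quantum-leap gluing yield an explicit hiding region made of finitely many intervals. In other words, the paper \emph{designs} the dynamics to enforce a low-growth, near-abelian orbit structure, rather than trying to blow one up inside a pre-existing minimal system.
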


Let us discuss the importance of the closeness 
to the identity map in the assumption of the Theorem. 
The first importance is that it examines a shared intuition that 
``a groups generated by elements close to the identity have nice structures"
(for example see section 3.3 of \cite{N}). The result of Duminy says that 
such an intuition is valid under our context (for there is a uniformity on
the shape of minimal set.)
Meanwhile, our Theorem says that the $C^1$-closeness 
is not enough to guarantee the intuition above with respect to the 
shape of the minimal set. 

The second importance is from their 
relationship with partially hyperbolic systems. 
Consider an IFS whose skew product is not
uniformly hyperbolic to the center direction. 
Then we expect that there would be
some regions where the central direction has 
Lyapnov exponents with small absolute values. 
The condition that the dynamics 
are close to the identity are naturally satisfied in such regions. 
Thus, we can interpret Proposition \ref{p.dum} to mean that 
the homoclinic classes in such bifurcation regions cannot be small 
(the projection always become intervals). Indeed, there are 
several results which seems to reflect this principle. For 
example, in the proof of \cite{Tsuji}, one important step 
to prove the finitude of SRB measures is showing
that ``the support of ergodic measures which exhibits weak hyperbolicity to
central direction is always large," see the introduction of \cite{Tsuji}.
From this viewpoint, our Theorem can be regarded
as a negative circumstantial evidence for the validity of corresponding 
result under the $C^1$-topology. See also \cite{VY}. 

Let us give some comments on the proof of Theorem. 
The above Theorem gives a sequence of examples of non-minimal 
semigroup actions. Thus it is natural to ask the finer information of the
minimal sets, such as Hausdorff dimension or topology of them.
The only thing our proof tells is that they do not contain several intervals.
To calculate Hausdorff dimension or decide a topology, it seems 
that our argumen is not enough. 
So we do not touch this problem in this article.

The proof is done by mixture of elementary combinatorial 
and analytic arguments. In the earlier stage of this research
the author found some inspiration from papers such as \cite{M1, M2, U},
but most of the arguments in this paper are independent from them.
Since he cannot find adequate reference for the basic facts used in this article, 
in section 2 we prepare basic definitions and furnish the proof of basic facts.

\quad

{\bf Contents} \quad In section 2, 
we give the definitions of basic notions and
give proofs of basic facts. 
In section 3, we explain the strategy 
of our construction. The proof consists of two parts: Construction of 
``three kind of parts" of dynamical systems and assembly of them. 
Assuming the existence of such parts, we also 
give the argument of assembly in section 3. 
In section 4, we prepare some combinatorial arguments. 
In section 5 and 6, we prove the existence of two parts
called runway and connector. 
In section 7, 8, and 9, we prove the existence 
of the quantum leap, which completes the whole argument.

\quad

{\bf Acknowledgements} \quad
The author is thankful to Artem Raibekas, who spared 
his time for discussions with the author at the earlier stage of this research. 
He is also thankful to the people in PUC-Rio (Brazil). 
He gave sequential seminars on this topic there, and
that experience definitely contributed to improve the manuscript.
In particular, he thanks Jairo Bochi, who piloted me 
to this subject and listened to earlier, immature version of 
the argument with great patience and insightful comments
(his comment on Proposition \ref{p.goodrat} 
enabled the author to improve the proof considerably).   
Most part of this paper was prepared during his stay at PUC-Rio 
as a post doctoral researcher of CNPq (PDJ). 

\section{Preliminaries}\label{s.basic}
\subsection{Minimal set for $\mathcal{C}$}
In this section, we give basic definitions and collect some results. 

Let $X$ be a topological space and $f, g:X \to X$ be maps. 
By $\langle f, g \rangle_{+}$, we denote the semigroup generated by $f, g$,
in other words, $\langle f, g \rangle_{+}$ is the set of all possible 
finite concatenations of $f$ and $g$. 
This semigroup acts to $X$ in the natural way.  
For $x \in X$, we put 
$\mathcal{O}_{+}(x) := \{ \phi(x) \mid \phi \in \langle f, g \rangle_{+}\}$ 
and call it \emph{orbit} of $x$.
A non-empty set $M \subset X$ is called a \emph{minimal set}
if for every $x \in M$,  $\overline{\mathcal{O}_{+}(x)} =M$. 
The semigroup action $\langle f, g \rangle_{+}$ is called \emph{minimal}
if $X$ is the minimal set. 
In other words, if for every $x \in X$, $\overline{\mathcal{O}_{+}(x)} =X$. 

Let us consider the case where $X =I$ and
 $(f, g) \in \mathcal{C}$ (see Introduction for definition).
In this setting, we have the following.
\begin{lem}\label{l.unimi}
There exists a unique minimal set $M$ in $I$. Furthermore, 
we have $M = \overline{\mathcal{O}_{+}(0)} = \overline{\mathcal{O}_{+}(1)}$
\end{lem}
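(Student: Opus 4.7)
The plan is to exploit the asymmetric attracting/repelling structure of the two endpoints: every point is pulled to $0$ by iterating $f$, and pulled to $1$ by iterating $g$. This should make $\{0,1\}$ universally reachable and force a single minimal set.

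First I would show that for every $x \in I$ the points $0$ and $1$ both lie in $\overline{\mathcal{O}_+(x)}$. Indeed, by the hypothesis $0<f(x)<x$ for $x\neq 0$, the sequence $f^n(x)$ is monotone decreasing and bounded below by $0$; any limit $\ell$ would satisfy $f(\ell)=\ell$, forcing $\ell = 0$. An identical argument with $g$ gives $g^n(x) \to 1$. So $0,1 \in \overline{\mathcal{O}_+(x)}$.

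Next I would note the standard fact that, since $f$ and $g$ are continuous, $\overline{\mathcal{O}_+(x)}$ is forward invariant under the semigroup $\langle f,g\rangle_+$ for any $x$. Setting $M := \overline{\mathcal{O}_+(0)}$, this gives $\overline{\mathcal{O}_+(1)} \subset M$ (because $1 \in M$) and symmetrically $M \subset \overline{\mathcal{O}_+(1)}$, so the two closures coincide. The same invariance argument shows $M$ is minimal: for any $y \in M$ one has $\overline{\mathcal{O}_+(y)} \subset M$ by invariance, while the first step gives $0 \in \overline{\mathcal{O}_+(y)}$, hence $M = \overline{\mathcal{O}_+(0)} \subset \overline{\mathcal{O}_+(y)}$.

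For uniqueness, suppose $M'$ is any minimal set and pick $y \in M'$. By minimality $M' = \overline{\mathcal{O}_+(y)}$, and by the first step $0 \in \overline{\mathcal{O}_+(y)} = M'$. Forward invariance of $M'$ then gives $M \subset M'$; but two minimal sets that are comparable (take any point in the smaller one and look at the closure of its orbit) must coincide.

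There is no real obstacle here: the whole argument rests on the monotone-convergence observation that $f^n(x)\to 0$ and $g^n(x)\to 1$, together with the general principle that the closure of a forward orbit is forward invariant and that any closed invariant set containing a minimal set's point contains that minimal set. The only mild care needed is to check that the limits of $f^n(x)$ and $g^n(x)$ really are the endpoints, which follows from the strict inequalities $f(x)<x$ and $g(x)>x$ together with continuity.
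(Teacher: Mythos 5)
Your proof is correct and follows essentially the same route as the paper: both rest on $f^n(x)\to 0$ (and symmetrically $g^n(x)\to 1$) to show that $0$ (resp.\ $1$) lies in every orbit closure, and then conclude minimality and uniqueness of $\overline{\mathcal{O}_+(0)}$. The only cosmetic difference is that you package the containment arguments via the general fact that orbit closures are closed and forward invariant, whereas the paper chases approximating sequences directly; these are the same mechanism in different clothing.
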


\begin{proof}
First, by the definition of $f$, 
it is easy to observe that for every $x \in I$, $f^n(x) \to 0$ as $n \to \infty$.
This implies that for every $x \in I$, 
we have $\overline{\mathcal{O}_{+}(0)} \subset \overline{\mathcal{O}_{+}(x)}$
(since for every $y \in \mathcal{O}_{+}(0)$ 
we can find a sequence in $\mathcal{O}_{+}(x)$ that converges to $y$,
by taking the image of the sequence converges to $0$).

We show that $\overline{\mathcal{O}_{+}(0)}$ is a minimal set. 
To see this, we fix $y \in \overline{\mathcal{O}_{+}(0)}$. 
By the above argument, we know 
$\overline{\mathcal{O}_{+}(0)} \subset 
\overline{\mathcal{O}_{+}(y)}$.
Let us see the inclusion of the opposite direction. 
Since $y \in \overline{\mathcal{O}_{+}(0)}$, the orbit of $0$ 
approaches arbitrarily close to $y$. 
By taking the image of this sequence, 
we can see that every orbit of $y$ is approached by the orbit of $0$
arbitrarily near. Thus we have 
$\mathcal{O}_{+}(y) \subset \overline{\mathcal{O}_{+}(0)}$.
By taking closure, we get the conclusion. 

Let us see the uniqueness. Suppose $M'$ is a minimal set. 
Fix $x \in M'$. We have 
$M' =\overline{\mathcal{O}_{+}(x)}$ by definition. 
Since $0 \in \overline{\mathcal{O}_{+}(x)}$, we have $0 \in M'$.
Now the minimality of $M'$ 
implies $M' =\overline{\mathcal{O}_{+}(0)}$.

Finally, with the symmetry of the conditions on $0$ and $1$, 
we conclude
$M' = \overline{\mathcal{O}_{+}(0)} = \overline{\mathcal{O}_{+}(1)}$.
\end{proof}

\begin{rem}
\begin{enumerate}
\item Existence of the (forward) minimal set is a quite general result.
One can always obtain it, for example, if $X$ is a compact Hausdorff space 
and $f, g$ are continuous.

\item Lemma \ref{l.unimi} explains why $\mathcal{C}$ is a appropriate space 
to investigate the property of minimal set.
Because of the uniqueness, there is no need to worry about 
the notion of ``continuation," in other words, there is no 
collision, division of minimal set in our context. 

\item Lemma \ref{l.unimi} also explains why minimal sets are important. 
If we take a skew product, then $\overline{\mathcal{O}_{+}(0)} = \overline{\mathcal{O}_{+}(1)}$ corresponds to the homoclinic class 
of fixed point corresponding to $0$ (which coincides with that of  $1$). 
\end{enumerate}
\end{rem}

\subsection{A sufficient condition for non-minimality}
We prepare some notation. 
Let $X$ be a connected one-dimensional manifold
(that is, $\mathbb{R}$, $\mathbb{R}_{\geq 0}$
$S^1$ or $I$). 
A non-empty subset $Y \subset X$ is called \emph{region}
if it satisfies the following:
\begin{itemize}
\item $Y$ is disjoint union of closed intervals with non-empty interior: $ Y := \coprod I_i$.
\item $Y$ is locally finite: For every compact interval $Z \subset X$, 
the number of connected component of $Y \cap Z$ is finite.
\end{itemize}

Note that if $X$ is compact, 
then the number of connected components 
of a region in $X$ is finite.

We want to give a sufficient condition for non-minimality. 
To describe it, we need a definition.

\begin{defin}
Let $X$ be a one dimensional manifold and $f, g : X \to X$.
A non-empty region $\mathcal{K}  \subset X $ is said to be 
a \emph{hiding region} for $(f, g)$ if the following holds:
\begin{itemize}
\item $\mathcal{K} \neq X$.
\item $\mathcal{K} \cap f(X) \subset f(\mathcal{K})$.
\item $\mathcal{K} \cap g(X) \subset g(\mathcal{K})$.
\end{itemize}

We call the second and the third conditions \emph{hiding property}.
Furthermore, we say that a hiding region is a \emph{strong hiding region}
if we change the second and the third conditions to 
 $\mathcal{K} \cap f(X) \subset \mathrm{int}(f(\mathcal{K}))$ and
 $\mathcal{K} \cap g(X) \subset \mathrm{int}(g(\mathcal{K}))$.
 \end{defin}

Roughly speaking, the existence of hiding region
implies the non-minimality of $\langle f, g \rangle_{+}$.
To see this, we collect some basic results in the case
$X = I$ and $(f, g) \in \mathcal{C}$.
\begin{lem}\label{l.non0}
Let us consider the case $X = I$.
Suppose $(f, g) \in \mathcal{C}$ and 
$\mathcal{K} \subset I$ is a hiding region for $(f, g)$, 
then $0, 1  \not\in  \mathcal{K}$.
 \end{lem}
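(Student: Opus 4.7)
The plan is to prove $0 \notin \mathcal{K}$ by contradiction (the statement $1 \notin \mathcal{K}$ will then follow by the symmetric argument using $g$). So suppose $0 \in \mathcal{K}$. Since $\mathcal{K}$ is a disjoint union of closed intervals with non-empty interior, the connected component of $\mathcal{K}$ containing $0$ is a closed interval of the form $[0,a]$ with $a > 0$.

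The main tool will be the fact that $f$, being an orientation-preserving homeomorphism onto its image with $f(0)=0$ and $f(x)<x$ for $x \neq 0$, maps $I$ bijectively onto the interval $[0,f(1)] \subsetneq I$, and its inverse on this image satisfies $f^{-1}(y)>y$ for every $y \in (0,f(1)]$. The hiding property $\mathcal{K}\cap f(I) \subset f(\mathcal{K})$ combined with the injectivity of $f$ then reads: whenever $y \in \mathcal{K} \cap f(I)$, the unique preimage of $y$ also lies in $\mathcal{K}$.

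I would split into two cases according to the size of $a$. If $a \geq f(1)$, then $f(I)=[0,f(1)] \subset [0,a] \subset \mathcal{K}$, so $\mathcal{K}\cap f(I)=f(I)$. The hiding property gives $f(I)\subset f(\mathcal{K})$, which by injectivity of $f$ forces $I\subset \mathcal{K}$, contradicting $\mathcal{K}\ne I$. If instead $a<f(1)$, then $[0,a]\subset f(I)$, so $[0,a] \subset \mathcal{K}\cap f(I) \subset f(\mathcal{K})$. Taking the preimage under $f$ yields $[0,f^{-1}(a)]\subset \mathcal{K}$. Since $f^{-1}(a)>a$, this connected subset of $\mathcal{K}$ strictly contains $[0,a]$, contradicting the assumption that $[0,a]$ is the component of $\mathcal{K}$ at $0$. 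Either case is impossible, hence $0 \notin \mathcal{K}$. The argument for $1$ is entirely analogous, using $g(1)=1$, $g(x)>x$ for $x\ne 1$, and $g(I)=[g(0),1]$.

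There is no significant obstacle here; the only mild subtlety is correctly translating the hiding property into a statement about preimages (which works because each of $f$ and $g$ is a homeomorphism onto its image, so the preimage of any point in $f(I)$ or $g(I)$ is unique) and being careful about the edge case $a \geq f(1)$ where $f^{-1}(a)$ does not make sense.
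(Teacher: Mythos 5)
Your proof is correct and uses the same core idea as the paper — pull the component $[0,a]$ of $\mathcal{K}$ at $0$ backward under $f$ via the hiding property to produce a contradiction. The paper phrases the contradiction as "iterating backward images forces $\mathcal{K} = I$," whereas you get a one-step contradiction with the maximality of $a$ (plus a clean handling of the edge case $a \geq f(1)$), which is arguably a tidier write-up of the same argument.
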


\begin{proof}
Suppose $0 \in \mathcal{K}$. 
Then, there exists a connected component $K_i$ 
of $\mathcal{K}$
with non-empty interior containing $0$. Especially, $K_i$ contains 
one of the fundamental domain of $f$. This means, by taking 
the backward image of $K_i$ under $f$, we see whole $I$ 
is contained in $\mathcal{K}$, but this contradicts to the assumption 
that $\mathcal{K}$ is a proper subset of $I$.
The proof $1 \not\in K$ can be done similarly. 
\end{proof}

\begin{lem}\label{l.nonf0}
If $\mathcal{K} \subset I$ is a hiding region for 
$(f, g) \in \mathcal{C}$. Then 
$f(1), g(0) \not\in \mathcal{K} $.
 \end{lem}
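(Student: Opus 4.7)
My plan is a short contradiction argument that piggybacks directly on Lemma \ref{l.non0}, using only the injectivity of $f$ and $g$ together with the hiding property.

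First I would suppose, for contradiction, that $f(1) \in \mathcal{K}$. Since the image of $f$ is $f(I) = [0, f(1)]$, the endpoint $f(1)$ certainly lies in $f(I)$, so $f(1) \in \mathcal{K} \cap f(I)$. The hiding property then gives $f(1) \in f(\mathcal{K})$, i.e.\ there is some $x \in \mathcal{K}$ with $f(x) = f(1)$. Because $f$ is an orientation preserving homeomorphism on its image, it is injective on $I$, forcing $x = 1$ and hence $1 \in \mathcal{K}$. This contradicts Lemma \ref{l.non0}, which asserts $1 \notin \mathcal{K}$.

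The argument for $g(0)$ is perfectly symmetric: assume $g(0) \in \mathcal{K}$, observe that $g(0)$ is the left endpoint of $g(I) = [g(0),1]$, so $g(0) \in \mathcal{K} \cap g(I) \subset g(\mathcal{K})$, and then injectivity of $g$ gives $0 \in \mathcal{K}$, contradicting Lemma \ref{l.non0}.

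There is really no obstacle here; the only subtlety to watch is that the hiding property is phrased in terms of $f(X)$ and $g(X)$ (rather than all of $X$), so one must remember that $f(1)$ and $g(0)$ genuinely belong to these images before applying it. Once that is noted, the proof is essentially two lines per case.
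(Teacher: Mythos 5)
Your proof is correct and matches the paper's argument exactly: use the hiding property to pull $f(1) \in \mathcal{K} \cap f(I)$ back into $f(\mathcal{K})$, apply injectivity of $f$ to conclude $1 \in \mathcal{K}$, and invoke Lemma \ref{l.non0} for the contradiction (symmetrically for $g(0)$). You have simply spelled out the steps that the paper states tersely.
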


\begin{proof}
Suppose $f(1) \in \mathcal{K}$. 
Since $\{f(1)\} \cap f(I) \neq \emptyset$ 
and $f$ is a homeomorphism on 
its image,  we have $1 \in \mathcal{K} $, 
but this contradicts to Lemma \ref{l.non0}.
The proof $g(0) \in \mathcal{K}$ can be done similarly. 
So we omit it. 
\end{proof}

\begin{rem}\label{r.dicoh}
Lemma \ref{l.nonf0} above imply the following: If $\mathcal{K}$ is 
a hiding region for $(f, g)$, each connected component $K_i$ 
of $\mathcal{K}$ has the following dichotomy;
$K_i \subset f(I)$, otherwise $K_i \subset I \setminus f(I)$.
\end{rem}

The following gives us one sufficient condition:
\begin{prop}\label{p.nonmin}
Suppose $(f, g) \in \mathcal{C}$ has a hiding region $K \subset I$.
Then, the (unique) minimal set $M$ for $\langle f, g \rangle_{+}$ 
is not equal to the whole interval $I$.
\end{prop}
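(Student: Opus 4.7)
The plan is to show that the complement $I \setminus \mathcal{K}$ is forward invariant under both $f$ and $g$, so that the orbit of $0$ (which lies outside $\mathcal{K}$ by Lemma \ref{l.non0}) can never enter $\mathcal{K}$. Combined with Lemma \ref{l.unimi} and the fact that $\mathcal{K}$ has non-empty interior, this immediately yields $M \neq I$.

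More concretely, the first step is to translate the hiding property into a statement about preimages. Suppose $x \in I \setminus \mathcal{K}$ and assume for contradiction that $f(x) \in \mathcal{K}$. Then $f(x) \in \mathcal{K} \cap f(I) \subset f(\mathcal{K})$, so there exists $z \in \mathcal{K}$ with $f(x) = f(z)$. Since $f$ is an orientation preserving homeomorphism on its image, it is injective, so $x = z \in \mathcal{K}$, contradicting the choice of $x$. Hence $f(I \setminus \mathcal{K}) \subset I \setminus \mathcal{K}$, and the identical argument with $g$ in place of $f$ gives $g(I \setminus \mathcal{K}) \subset I \setminus \mathcal{K}$.

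The second step uses Lemma \ref{l.non0} to conclude $0 \notin \mathcal{K}$, whence by the forward invariance proved above $\mathcal{O}_{+}(0) \subset I \setminus \mathcal{K}$. Taking closures gives
\[
M \;=\; \overline{\mathcal{O}_{+}(0)} \;\subset\; \overline{I \setminus \mathcal{K}} \;=\; I \setminus \mathrm{int}(\mathcal{K}),
\]
where the first equality is Lemma \ref{l.unimi}. Since $\mathcal{K}$ is a non-empty disjoint union of closed intervals with non-empty interior, $\mathrm{int}(\mathcal{K}) \neq \emptyset$, so $M$ is a proper subset of $I$.

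There is essentially no serious obstacle here; the only subtle point is the implication ``hiding property $\Rightarrow$ forward invariance of the complement,'' which relies on the injectivity of $f$ and $g$ on their images. Once that observation is made, the rest is immediate from the two preceding lemmas together with the fact that a hiding region has non-empty interior by definition.
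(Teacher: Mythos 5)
Your proof is correct and follows essentially the same idea as the paper: both arguments hinge on the observation that the hiding property $\mathcal{K}\cap f(I)\subset f(\mathcal{K})$ combined with injectivity of $f$ on its image forces the preimage of a point of $\mathcal{K}$ to lie in $\mathcal{K}$, so that $\mathcal{O}_{+}(0)$ stays in $I\setminus\mathcal{K}$. You package this more cleanly as forward invariance of the complement, whereas the paper runs an induction on word length using the connected-component dichotomy of Remark~\ref{r.dicoh}, but the mathematical content is the same.
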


\begin{proof}
Indeed, we can show $M \cap \mathrm{int}(K) =\emptyset$.
To see this, we investigate the orbit of $0$ under $\langle f, g \rangle_{+}$.
We show that $\mathcal{O}_{+}(0) \subset I \setminus K$ by the induction.
By Lemma \ref{l.non0} we know that $0 \not\in \mathcal{K} $.
Suppose for every $w_i \in \langle f, g \rangle_{+} $ of length 
less than or equal to$k$,
we proved that $w_i(0) \not\in \mathcal{K}$.
Suppose that there exists $W \in \langle f, g \rangle_{+} $ of length
$k+1$ such that $W(0) \in \mathcal{K}$. Especially, 
there exists $w \in \langle f, g \rangle_{+}$ of length $k$
such that $W= f \circ w$ or $W= g \circ w$ holds. 

Let us consider the case where $W=f \circ w$ holds.
Then, we take a connected component $K_i$ such that $W(0) \in K_i$. There are two 
possibilities (see Remark \ref{r.dicoh}): $K_i \subset I \setminus f(I)$ or $K_i \subset f(I)$.
The first case cannot happen, since $K_i$ contains a point that is the image of $f$.
In the latter case, there exists a connected component 
$K_j$ such that $K_i \subset f(K_j)$.
This means $w(0) \in K_j$, but this is a contradiction.
The other case can be treated similarly. 
\end{proof}

\subsection{Some examples}

In this section, we see some examples of minimal sets for
IFSs in $\mathcal{D}$. 

\begin{example}\normalfont
Let us take $(f, g) \in \mathcal{D}$ such that there exists $C$
with $0<C<1$ such that 
 $f'$, $g'$ are uniformly less 
than $C$. In this case, the semigroup action 
$\langle f, g \rangle_{+} $ on $I$ is minimal. 
Indeed, take $x \in I \setminus \{0, 1\}$ and take an arbitrarily small
interval $J$ that contains $x$. We show that 
$\mathcal{O}_{+}(0) \cap J  \neq \emptyset$.
To see this, let us consider the backward image of $J$. 
First, if $J$ contains $0$ or $1$, then it is OK. 
Similarly, if $J$ contains $f(1)$ or $g(0)$ then we can have above. 

So, let us assume $J \cap \{0, 1, f(1), g(0) \} = \emptyset$.
In this case, we can take the image of whole $J$ 
under $f^{-1}$ or $g^{-1}$.
Indeed, as long as the backward image is  disjoint from 
$\{0, 1, f(1), g(0) \}$,
we can take the backward image. 
Since $f, g$ are uniformly contracting, 
$f^{-1}$, $g^{-1}$ are uniformly expanding. 
Thus under the backward iteration,
the length of $J$ grows exponentially. This means that 
under the finite backward iteration, $J$ must have non-empty 
intersection with  $\{f(1), g(0) \}$, which implies 
$\mathcal{O}_{+}(0) \cap J  \neq \emptyset$.
\end{example}

Let us see the example where the action is \emph{not} minimal.

\begin{example}\normalfont
We consider the IFSs on $I$ generated  by 
$f_{\varepsilon}(x) := (1/2 + \varepsilon)x$ and 
$g_{\varepsilon}(x) :=(1/2 + \varepsilon)(x-1) +1 $.
When $\varepsilon =0$, $g_0 \circ f_0$ has a unique contracting fixed point 
$p_0 := 2/3$. For small $\varepsilon$, we can define the continuation 
and  denote it by $p_\varepsilon$, 
and put  $q_{\varepsilon} :=f_{\varepsilon}(p_\varepsilon)$ 
(it is not difficult to write down explicit formulas of $p_{\varepsilon}$ 
and $q_{\varepsilon}$).

Then, we modify $f_{\varepsilon}$, $g_{\varepsilon}$ 
into $F_{\varepsilon}$, $G_{\varepsilon}$ as follows:

\begin{itemize}
\item For  $f_{\varepsilon}$.
First we fix a small neighborhood $I_{p_{\varepsilon}}$ of $p_{\varepsilon}$
(For the smallness, $I_{p_{\varepsilon}} \subset (0, 1/2 -\varepsilon) $ is enough). 
Then we modify $f_{\varepsilon}$ to $F_{\varepsilon}$ 
such that  
$F_{\varepsilon}$ satisfies $F'_{\varepsilon} >1$ around $p_{\varepsilon}$, 
keeping $F_{\varepsilon} =f_{\varepsilon}$ outside $I_{\varepsilon}$.

\item For  $g_{\varepsilon}$. We perform similar modifications to $g_{\varepsilon}$.
First we fix a small neighborhood $I_{q_{\varepsilon}}$ of $q_{\varepsilon}$
(For the smallness, $I_{q_{\varepsilon}} \subset ( 1/2 +\varepsilon, 1) $ is enough). 
Then we modify $g_{\varepsilon}$ to $G_{\varepsilon}$ 
such that  
$G_{\varepsilon}$ satisfies $G'_{\varepsilon} >1$ around $q_{\varepsilon}$, 
keeping $G_{\varepsilon} =g_{\varepsilon}$ outside $I_{q_{\varepsilon}}$.
\end{itemize}

Then, we fix an interval $J_{p_\varepsilon}$, $J_{q_\varepsilon}$ 
of the same length contained in $I$ centered 
at $p_{\varepsilon}$, and  $q_{\varepsilon}$. 
Then we can check $(F_{\varepsilon}, G_{\varepsilon})$ satisfies 
the condition in Proposition \ref{p.nonmin} with hiding region 
$\mathcal{J} := J_{p_{\varepsilon}} \coprod J_{q_{\varepsilon}}$. 
Thus this action is not minimal.
\end{example}

\begin{rem}
In this construction, the essential point is to obtain a point whose 
orbit under $f^{-1}$ and $g^{-1}$ is finite. So, to answer Question 1, 
it is enough to find pair $(f, g)$ such that $(f^{-1}, g^{-1})$ has 
finite orbit arbitrarily $C^1$-close to the identity.
However, it seems that this problem is very hard to solve,
so we do not pursue it in this article.
\end{rem}

\subsection{Robustness of non-minimality}

Suppose $\mathcal{K}$ is a hiding region for $(f, g) \in \mathcal{C}$. 
Then we know the action is not minimal. In this section, we consider 
if this properties are \emph{robust} or not.
The content of this section has no direct logical link for the proof of 
the Theorem.

\begin{rem}
For a region $\mathcal{K}$, being a strong hiding region 
is a $C^0$-robust property in the following sense:
Suppose $\mathcal{K}$ is a strong hiding region for $(f, g) \in \mathcal{C}$,
then there exists $\varepsilon >0$ such that
for every $(\tilde{f}, \tilde{g}) \in \mathcal{C}$ satisfying 
$d_{C^0} (f, \tilde{f}), d_{C^0} (g, \tilde{g}) < \varepsilon$, then 
$\mathcal{K}$ is also a hiding region for $(\tilde{f}, \tilde{g})$.
This comes from the fact that being a hiding region can be decided 
only from the information of the 
boundary points (which is finite) of $\mathcal{K}$.
\end{rem}

The following lemma says that the difference between 
hiding regions and strong hiding regions are ignorable.

\begin{lem}
Suppose $(f, g)\in \mathcal{D}$ has a hiding region $\mathcal{K}$.
Then, there exists $(F, G)\in \mathcal{D}$ $C^1$-arbitrarily close 
to $(f, g)$ such that $\mathcal{K}$ is a strictly hiding region. 
\end{lem}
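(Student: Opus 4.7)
The plan is to construct $F = f + \eta_f$ and $G = g + \eta_g$, where $\eta_f, \eta_g$ are $C^1$-small sums of disjointly supported smooth bump functions, placed at the finitely many boundary points of $\mathcal{K}$ where the hiding inclusion is tight, and oriented so as to turn each tight inclusion into a strict one.

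First I would fix the combinatorics. Write $\mathcal{K} = \coprod_{i=1}^{n} K_i$ with $K_i = [a_i, b_i]$. By Lemmas \ref{l.non0} and \ref{l.nonf0} we have $\{0, 1, f(1), g(0)\} \cap \mathcal{K} = \emptyset$, so every $a_i, b_i$ lies strictly interior to $I$. By Remark \ref{r.dicoh}, each $K_i$ is either contained in $f(I)$ or disjoint from $f(I)$. For every $K_i \subset f(I)$, injectivity of $f$ combined with the hiding property supplies a unique $j = j(i)$ with $K_i \subset f(K_j)$; writing $K_j = [c_j, d_j]$, this amounts to $f(c_j) \le a_i$ and $b_i \le f(d_j)$. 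Let $A_{-}^{f}$ be the set of $c_j$ at which the left inequality is an equality, and $A_{+}^{f}$ the set of $d_j$ at which the right one is. Define $A_{-}^{g}, A_{+}^{g}$ analogously. All four sets are finite and contained in $(0,1)$.

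Next, I would pick $\delta > 0$ smaller than half the minimum gap within $A_{-}^{f} \cup A_{+}^{f} \cup \{0,1\}$, and for each $p \in A_{-}^{f} \cup A_{+}^{f}$ take a smooth bump $\beta_p \colon I \to [0,1]$ with $\beta_p(p) = 1$ and $\mathrm{supp}\,\beta_p \subset [p - \delta, p + \delta] \subset (0,1)$. Set
\[
\eta_f = \varepsilon\left(-\sum_{c \in A_{-}^{f}} \beta_c + \sum_{d \in A_{+}^{f}} \beta_d\right), \qquad F = f + \eta_f,
\]
and construct $\eta_g, G$ by the same recipe. For $\varepsilon > 0$ small one checks that $F$ remains a strictly increasing $C^1$-diffeomorphism onto its image: $\eta_f$ vanishes on a neighborhood of $0$, so $F(0) = 0$ and $F'(0) = f'(0) \in (0,1)$; $f' \ge m > 0$ on the compact interval $I$ absorbs the bounded derivative of $\eta_f$; and the strict inequality $f(x) < x$ on $(0,1]$ persists under small perturbations because $x - f(x) \ge (1 - f'(0))x/2$ near $0$ and is bounded below by a positive constant on each interval $[\delta_0, 1]$. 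Hence $F, G \in \mathcal{D}$ and are $C^1$-close to $f, g$.

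At a tight left endpoint $c \in A_{-}^{f}$ paired with $K_i$, one computes $F(c) = f(c) - \varepsilon = a_i - \varepsilon < a_i$; similarly $F(d) > b_i$ at tight right endpoints, while at non-tight endpoints the strict inequalities already present are preserved under small perturbations. This yields $K_i \subset \mathrm{int}(F(K_{j(i)})) \subset \mathrm{int}(F(\mathcal{K}))$ for every $K_i \subset F(I)$, and the analogous statement for $G$, so $\mathcal{K}$ is a strong hiding region for $(F, G)$. The only delicacy is the joint calibration of $\varepsilon$: small enough to preserve membership in $\mathcal{D}$ yet sufficient to break each of the finitely many tight equalities. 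This succeeds because the tight endpoints are finite in number and strictly interior to $I$, so the bumps can be made disjoint and localized away from the sensitive endpoints $\{0, 1\}$.
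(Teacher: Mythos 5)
Your proposal is correct and takes essentially the same approach as the paper: the paper's proof likewise observes that the boundary points of $\mathcal{K}$ are finite and then perturbs $f$ and $g$ by bump functions near the tight endpoints to turn each equality into a strict inclusion. Your write-up simply fills in the bookkeeping (identification of the tight endpoint sets $A_{\pm}^f$, $A_{\pm}^g$, choice of $\delta$ to disjoin the supports, sign conventions on the bumps, and the verification that $(F,G)$ stays in $\mathcal{D}$) that the paper leaves implicit.
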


\begin{proof}
Suppose $(f, g)$ has a hiding region $\mathcal{K}$ which 
is not strictly hiding. some of the boundary of the interval 
goes to the boundary of the others. Note that the number of 
the boundaries are finite: $\mathcal{K}$ consists of finite 
number of connected components. 
Thus, by modifying $f$ and $g$ around such points 
to cover $\mathcal{K}$ in its interior by a bump function
and keep intact 
around other points, we can obtain desired maps. 
\end{proof}

\begin{rem}
In fact, if $(f, g)$ are $C^r$ maps $(0 \leq r \leq +\infty, \omega)$, 
then $(F, G)$ can be taken $C^r$-arbitrarily close to $(f, g)$.
\end{rem}

\subsection{Another $C^1$-distance on $\mathcal{D}$}
We clarify the distance which 
we use to discuss the convergence of dynamical systems.
\begin{defin}
Let $f: I \to I$ be a map. We define 
$d_{C^0}(f) := \sup_{x \in I} |f(x) -x|$
(this gives a finite non-negative value if $f$ is bonded, 
especially, if $f$ is continuous).  
Furthermore, if $f$ is $C^1$, then we define
$d'_{C^1}(f) := \sup_{x \in I} |f'(x) -1|$ 
and $d_{C^1}(f) := d_{C^0}(f) + d'_{C^1}(f)$.

We say that $(f_n, g_n) \subset \mathcal{D}$ 
converges to $(\mathrm{id}, \mathrm{id})$ if 
$d_{C^1}(f_n), d_{C^1}(g_n) \to 0$ as $n \to +\infty$.
\end{defin}

This is the usual definition of $C^1$-convergence. 
In our context, we have a simpler (hence easy to handle) condition which
guarantees the convergence. 

We start from a simple observation:
\begin{lem}
Let $f: I \to I$ be a $C^1$-map with $f(0) = 0$.
Then we have $d_{C^0}(f) \leq d'_{C^1}(f)$.
\end{lem}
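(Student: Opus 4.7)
The plan is to consider the auxiliary function $h(x) := f(x) - x$ and bound it pointwise using the fact that its derivative is controlled by $d'_{C^1}(f)$. Since $f(0) = 0$, we have $h(0) = 0$, so for any $x \in I$ the difference $h(x) - h(0)$ equals $h(x)$ itself.

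First, I would note that $h$ is $C^1$ with $h'(t) = f'(t) - 1$, so $\sup_{t \in I} |h'(t)| = d'_{C^1}(f)$. Then, by the fundamental theorem of calculus (or equivalently the mean value theorem applied on $[0,x]$),
\[
|h(x)| = \left| \int_0^x h'(t)\, dt \right| \leq \int_0^x |h'(t)|\, dt \leq x \cdot d'_{C^1}(f).
\]
Since $x \in I = [0,1]$, we have $x \leq 1$, so $|f(x) - x| = |h(x)| \leq d'_{C^1}(f)$ for every $x \in I$.

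Taking the supremum over $x \in I$ then gives $d_{C^0}(f) \leq d'_{C^1}(f)$, as desired.

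There is essentially no obstacle here: the statement is a direct consequence of the fact that $I$ has length $1$ together with the boundary condition $f(0) = 0$, which pins down the constant of integration. The only thing worth pointing out is that the inequality becomes false (off by a factor of the length of the interval) if one replaces $I = [0,1]$ by a longer interval, which is why the unit length of $I$ is implicitly being used.
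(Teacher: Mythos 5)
Your proof is correct and is essentially the same as the paper's: the paper states the pointwise bound $(1 - d'_{C^1}(f))x \leq f(x) \leq (1 + d'_{C^1}(f))x$, which is precisely what you obtain by integrating the derivative bound from $0$ to $x$ using $f(0)=0$, and then both arguments conclude by $|f(x)-x| \leq d'_{C^1}(f)\, x \leq d'_{C^1}(f)$. You have merely made the fundamental theorem of calculus step explicit where the paper leaves it implicit.
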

\begin{proof}
By definition of $f$, we have
$(1- d'_{C^1}(f) )x \leq f(x)  \leq (d'_{C^1}(f)+1 ) x$ for $x \in I$. 
Thus we have $|f(x) - x| \leq d'_{C^1}(f)x$. Take the supremum of each side. 
\end{proof}

As a conclusion, we have the following:
\begin{prop}\label{p.boot}
If a sequence of $C^1$ maps  
$(f_n)$ on $I$ satisfies $f_n(0) =0$ and $d'_{C^1}(f_n) \to 0$, then 
$d_{C^1}(f_n) \to 0$. 
\end{prop}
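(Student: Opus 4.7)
The plan is to invoke the preceding lemma directly: since each $f_n$ is a $C^1$ map with $f_n(0) = 0$, the lemma gives $d_{C^0}(f_n) \leq d'_{C^1}(f_n)$. By hypothesis the right-hand side tends to $0$, so sandwiching with the obvious lower bound $d_{C^0}(f_n) \geq 0$ yields $d_{C^0}(f_n) \to 0$.

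Then I would simply unfold the definition $d_{C^1}(f_n) = d_{C^0}(f_n) + d'_{C^1}(f_n)$. Both summands are non-negative sequences tending to zero, so their sum tends to zero as well. This gives $d_{C^1}(f_n) \to 0$, which is the desired conclusion.

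There is essentially no obstacle here; the proposition is a one-line bootstrapping statement whose only purpose is to record that, under the normalization $f_n(0) = 0$, controlling the derivative near $1$ automatically controls the $C^0$ distance from the identity. The preceding lemma did all the work (via the mean value estimate $|f(x) - x| \leq d'_{C^1}(f) \cdot x$ integrated from $0$), and the proposition is merely packaging that estimate so that in later sections one only needs to verify the derivative condition when constructing the counterexamples $(f_n, g_n) \in \mathcal{D}$.
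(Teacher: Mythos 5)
Your proof is correct and is exactly the paper's intended argument: the paper states Proposition \ref{p.boot} immediately after the lemma with the phrase ``As a conclusion, we have the following,'' relying on precisely the combination of the lemma's bound $d_{C^0}(f_n) \leq d'_{C^1}(f_n)$ and the definition $d_{C^1} = d_{C^0} + d'_{C^1}$ that you spell out.
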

In other words, in our case we can consider that 
$d'_{C^1}$ defines a distance from the identity map.

Note that similar results hold for $g$ satisfying $g(1)=1$.

\subsection{Reduction to PL-case}

For the actual construction of non-minimal semigroup actions, 
we work on the space of PL-maps.
This subsection gives a translation of $C^1$-problem into PL-one.
In this article, a PL-map means piecewise linear map with  finite
non-differentiable points.

\begin{defin}
We say $(f, g) \in \mathcal{P}$ if $(f, g) \in \mathcal{C}$ and 
$f, g$ are PL-maps.
\end{defin}

We define a metric on this space.

\begin{defin}
Let $f$ be a PL-map, we denote the set of non-differentiable points 
by $c(f)$.
For a PL-map $f$ on $I$, we define 
$\mu(f) := \max_{x \in I \setminus c(f)} |f'(x)-1| $.
\end{defin}

The function $\mu(f)$ plays a similar role on $\mathcal{P}$
as $d'_{C^1}$ played on $\mathcal{D}$.
In this setting, we ask the following question.

\begin{question}\label{q.PL}
Is there $(f_n, g_n) \subset \mathcal{P}$ such that 
$\mu (f_n)$, $\mu (g_n) \to 0$ as $n \to +\infty$ and 
each $(f_n, g_n)$ has hiding region?
\end{question}

The point is that this question is equivalent to 
the previous one:
\begin{prop}\label{p.reduPL}
If we can solve Question \ref{q.PL}, then 
we can construct the sequence in Theorem from that solution. 
\end{prop}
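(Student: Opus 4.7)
The plan is to start with a solution $(f_n, g_n) \in \mathcal{P}$ of Question \ref{q.PL}, with $\mu(f_n), \mu(g_n) \to 0$ and hiding region $\mathcal{K}_n$, and manufacture a sequence $(F_n, G_n) \in \mathcal{D}$ with the required properties by smoothing out the corners of each PL-map. The Remark following the robustness lemma (valid in any regularity class, including PL) lets us first assume, after an arbitrarily small PL-perturbation, that each $\mathcal{K}_n$ is in fact a \emph{strong} hiding region for $(f_n, g_n)$. This is the key preparation: strong hiding is a $C^0$-open condition, so we gain the slack needed to smooth without losing non-minimality.

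Next, I would perform a standard $C^1$-smoothing of each PL-map in arbitrarily small neighborhoods of the finitely many points of $c(f_n)$ and $c(g_n)$, avoiding the endpoints $0, 1$ and also avoiding the (finitely many) boundary points of $\mathcal{K}_n$. A convolution-type interpolation can be arranged so that the derivative $F_n'$ on each smoothed interval lies in the convex hull of the one-sided derivatives of $f_n$ at the corresponding corner; in particular
\[
\sup_{x \in I}|F_n'(x) - 1| \;\leq\; \mu(f_n),
\]
and similarly $|G_n' - 1| \leq \mu(g_n)$, so $d'_{C^1}(F_n), d'_{C^1}(G_n) \to 0$. Since the smoothing can be taken $C^0$-arbitrarily close to the original PL-map, and since strong hiding is $C^0$-robust, $\mathcal{K}_n$ remains a hiding region for $(F_n, G_n)$. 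Proposition \ref{p.nonmin} then gives non-minimality of $\langle F_n, G_n \rangle_{+}$.

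Finally, one has to check that $(F_n, G_n) \in \mathcal{D}$ and that convergence is in the full $C^1$-metric. By construction $F_n(0) = 0$ and $G_n(1) = 1$, and $F_n, G_n$ are $C^1$-diffeomorphisms on their images. The condition $F_n'(0) \in (0,1)$ is automatic because, for a PL-map with $f_n(0) = 0$ and $f_n(x) < x$ for $x \in (0, x_1]$ on the first linear piece, the initial slope equals $f_n(x_1)/x_1 \in (0,1)$, and the smoothing is performed strictly inside $(0,1)$; symmetrically for $G_n'(1)$. The $C^1$-convergence $(F_n, G_n) \to (\mathrm{id}, \mathrm{id})$ now follows from $d'_{C^1}(F_n), d'_{C^1}(G_n) \to 0$ together with the bootstrap Proposition \ref{p.boot}.

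The only delicate point is coordinating the three parameters of the smoothing at scale $n$, namely the width of the smoothing intervals, the amount of $C^0$-perturbation needed to upgrade $\mathcal{K}_n$ to strong hiding, and the $C^0$-slack in the strong hiding condition itself; but since all three can be chosen independently and arbitrarily small for each fixed $n$, this is a bookkeeping issue rather than a real obstacle. The main content of the reduction is thus the observation that PL-smoothing is compatible with both the hiding property (via strong hiding and $C^0$-robustness) and the $C^1$-size of the perturbation (via the convex-hull bound on derivatives combined with Proposition \ref{p.boot}).
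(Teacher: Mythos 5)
Your route is genuinely different from the paper's, and it would work, but there are two subtleties worth naming. The paper does not upgrade to strong hiding at all; instead it first invokes Lemma \ref{l.plgene} to slide the corners $c(f_n),c(g_n)$ off $\partial K_n$ (a PL modification that \emph{preserves} $\mu$ exactly) and then applies Lemma \ref{l.smoothing} with the smoothing neighborhoods $U_n$ chosen disjoint from $\partial K_n$. Since the smoothing changes nothing near $\partial K_n$, the hiding property for $K_n$ is preserved \emph{identically} (not just robustly), and the derivative bound $d'_{C^1}\le\mu$ comes directly from Lemma \ref{l.smoothing}; Proposition \ref{p.boot} then gives $C^1$-convergence. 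Your detour through strong hiding plus $C^0$-robustness can be made to work, but it relies on two facts the paper does not supply in the form you need. First, the lemma that upgrades a hiding region to a strong one is stated for $(f,g)\in\mathcal{D}$ (and extended in the Remark only to $C^r$ classes); its stated proof uses bump functions, which leave the PL class, so to run your argument you need a separate (easy but unstated) PL version of that upgrade, with explicit control on the resulting increment to $\mu$. Second, your stipulation that the smoothing avoid the boundary points of $\mathcal{K}_n$ is not achievable when a corner of $f_n$ or $g_n$ happens to lie on $\partial\mathcal{K}_n$; with strong hiding in hand you do not actually need to avoid them (the $C^0$-robustness absorbs the change), so that clause should simply be dropped, but as written it is a small inconsistency that the paper's corner-relocation step is designed precisely to sidestep. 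In short: both routes close the argument, but the paper's is tighter because Lemma \ref{l.plgene} replaces the strong-hiding/robustness machinery with a zero-cost relocation of corners, making the hiding property survive the smoothing for trivial reasons.
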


Roughly speaking, what we need to prove 
is that we can remove  "corners" of PL-maps by arbitrarily small 
modifications. For the proof of the previous proposition, 
we prepare two lemmas.

\begin{lem}\label{l.plgene}
Let $f$ be a PL-map on $I$ and $X := \{ x_i \}$ be a 
set of finite points in $I$.
Then given neighborhood $U$ of $X$ there exists a PL-map $g$ satisfying all 
the following conditions:
(i) $\mu(f) = \mu(g)$,
(ii) $f(x_i) = g(x_i)$,
(iii) $f = g$ outside $U$, and
(iv) $c(g) \cap X = \emptyset$.
\end{lem}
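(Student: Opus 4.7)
The plan is to modify $f$ locally near each point of $X$ via independent surgeries on disjoint neighborhoods. Shrinking $U$ if necessary, I would write $U = \bigsqcup_i U_i$ with $U_i = [x_i - \delta_i, x_i + \delta_i]$ chosen so small that each $U_i$ lies in the interior of $I$, avoids every point of $X$ other than $x_i$, and contains no corner of $f$ other than possibly $x_i$ itself. Setting $g = f$ off $\bigsqcup_i U_i$, the problem reduces to defining $g$ on each $U_i$ independently, since the four conditions decouple across distinct indices $i$.

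On a given $U_i$, if $x_i \notin c(f)$ then $f$ is already linear on $U_i$ and I would simply take $g|_{U_i} = f|_{U_i}$, securing (ii), (iii), (iv) trivially at $x_i$. Otherwise $x_i \in c(f)$ with left and right slopes $\alpha, \beta$; I would pick a small parameter $\eta < \delta_i$ and a slope $\sigma$, and replace $f|_{U_i}$ by the three-piece PL graph consisting of a middle linear segment of slope $\sigma$ on $[x_i-\eta, x_i+\eta]$ through $(x_i, f(x_i))$, joined to $f$ at the boundary of $U_i$ by linear interpolations on $[x_i - \delta_i, x_i - \eta]$ and $[x_i + \eta, x_i + \delta_i]$. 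A direct calculation gives the outer slopes
\[
s_- \;=\; \frac{\alpha\delta_i - \sigma\eta}{\delta_i - \eta}, \qquad s_+ \;=\; \frac{\beta\delta_i - \sigma\eta}{\delta_i - \eta}.
\]
By construction $g$ is continuous on $U_i$ with $g(x_i) = f(x_i)$, the only new corners of $g$ sit at $x_i \pm \eta$ and hence miss $X$ once $\eta$ is small enough, and $g$ agrees with $f$ on $\partial U_i$; this secures (ii), (iii), and (iv).

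Condition (i) is the subtle one. Taking $\sigma = \alpha$ yields $s_- = \alpha$ exactly and $s_+ = \beta + (\beta - \alpha)\eta/(\delta_i - \eta)$, which tends to $\beta$ as $\eta \to 0$, so all new local slopes on $U_i$ have deviation from $1$ at most $\max(|\alpha - 1|, |\beta - 1|) + o(1) \leq \mu(f) + o(1)$. Since we may further shrink each $\delta_i$ so that some linear piece of $f$ realizing the supremum $\mu(f)$ survives in the complement of $U$, the identity $\mu(g) = \mu(f)$ then follows whenever $\max(|\alpha-1|, |\beta-1|) < \mu(f)$ strictly. The principal obstacle I anticipate is the extremal case $\max(|\alpha-1|,|\beta-1|) = \mu(f)$: an averaging argument on the two halves of $U_i$ shows that no local PL replacement through $(x_i, f(x_i))$ without a corner at $x_i$ can keep every slope inside $[1-\mu(f), 1+\mu(f)]$, so strict equality in (i) must be arranged either by a more delicate slope assignment that borrows the extremal slope already present in $f$ on either side of $U_i$, or understood up to an arbitrarily small tolerance controlled by $\eta$, which is what the application in Proposition~\ref{p.reduPL} actually requires.
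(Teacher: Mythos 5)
Your construction (splitting the corner at $x_i$ into two nearby corners via a short middle segment) is a variant of what the paper does (simply sliding the corner to one side), and both approaches hit the same wall, which you are right to flag. However, you slightly overstate the obstruction. With your choice $\sigma = \alpha$, the construction achieves $\mu(g) = \mu(f)$ \emph{exactly} for small $\eta$ whenever $|\beta - 1| < \mu(f)$ strictly: $s_- = \alpha$ contributes nothing new, $s_+ \to \beta$ lies strictly inside $[1-\mu(f), 1+\mu(f)]$, and some segment of $f$ off $U$ still realizes the maximum. (If instead only $\beta$ is extremal, take $\sigma = \beta$.) The genuine obstruction occurs only when $\{\alpha, \beta\} = \{1-\mu(f), 1+\mu(f)\}$, both extremal in opposite senses. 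In that case the averaging argument you outline shows something stronger than you state: any PL $g$ agreeing with $f$ at $\partial U_i$ and at $x_i$ with $\mu(g) \leq \mu(f)$ must equal $f$ on \emph{both} halves of $U_i$ (the weighted mean of the slopes on each half is pinned at $1 \pm \mu(f)$, so every slope there is forced), hence the corner at $x_i$ cannot be removed at all. For a concrete counterexample, take $f(x) = (1+\mu)x$ on $[0, 1/2]$, $f(x) = (1-\mu)x + \mu$ on $[1/2, 1]$, $X = \{1/2\}$, and $U$ any small neighborhood of $1/2$.

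So the lemma as stated is false, and the paper's proof — which declares that "modifying the position of corner keeping $\mu(f)$ \ldots\ is easily" — overlooks exactly this case. The correct formulation, which your construction does prove, replaces (i) by $\mu(g) \leq \mu(f) + \varepsilon$ for any prescribed $\varepsilon > 0$; and, as you observe, this weaker version is all that the application in Proposition \ref{p.reduPL} requires, since one only needs the modified sequence to still satisfy $\mu(f_n), \mu(g_n) \to 0$.
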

\begin{proof}
If $c(f) \cap X = \emptyset$, then we just take $g = f$.
If not, for each $x_i \in c(f)$, we modify $f$ so 
that modifying the position 
of corner keeping $\mu(f)$. 
Giving such a modification is easily.
\end{proof}

The following lemma says that we can remove 
the corner keeping the most of characteristics of dynamics.

\begin{lem}\label{l.smoothing}
Let $f$ be a PL-map on $I$ which is homeomorphism on its image. 
Then given neighborhood $U$ of $c(f)$,
there exists a $C^1$-map $g$ on $I$ satisfying the following:
(i) $f = g$ outside $U$. (ii) $d'_{C^1}(g) = \mu (f)$.
\end{lem}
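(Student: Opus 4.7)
The plan is to smooth each corner of $f$ independently inside $U$, with the new derivative being a convex combination of the two slopes of $f$ meeting at that corner. Since $c(f)$ is finite, I may shrink $U$ so that $U = \bigsqcup_i (a_i, b_i)$ is a disjoint union of small neighborhoods, each $(a_i, b_i)$ containing exactly one corner $x_i$ (with left slope $m_i^-$ and right slope $m_i^+$) and contained in the union of the two linear pieces of $f$ meeting at $x_i$.

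On each $[a_i, b_i]$ I would define
\[
g'(x) := (1-\phi_i(x))\, m_i^- + \phi_i(x)\, m_i^+,
\]
where $\phi_i \colon [a_i, b_i] \to [0,1]$ is smooth, vanishes near $a_i$, equals $1$ near $b_i$, and satisfies the single scalar constraint $\int_{a_i}^{b_i} \phi_i(x)\,dx = b_i - x_i$. Such $\phi_i$ is easy to produce; for example, $\phi_i(x) = \Phi((x-x_i)/\epsilon)$ for small $\epsilon$ and a fixed smooth step $\Phi \colon \mathbb{R} \to [0,1]$ antisymmetric about $(0, 1/2)$, so that $\int_{-1}^{1}\Phi(s)\,ds = 1$. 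Put $g(x) := f(a_i) + \int_{a_i}^{x} g'(t)\,dt$ on $[a_i, b_i]$ and $g = f$ on $I \setminus U$.

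The normalization of $\phi_i$ is exactly what forces $\int_{a_i}^{b_i} g'(t)\,dt = m_i^-(x_i - a_i) + m_i^+(b_i - x_i) = f(b_i)-f(a_i)$, so $g$ agrees with $f$ at each $b_i$; since $g'$ likewise matches $f'$ across $\partial U_i$ (both sides equal the relevant $m_i^\pm$), the glued map $g$ is $C^1$ on $I$. Because $g'(x)$ lies on the segment between $m_i^-$ and $m_i^+$ on each $U_i$, we have $|g'(x) - 1| \le \max(|m_i^- - 1|,\, |m_i^+ - 1|) \le \mu(f)$ there, while $|g'(x)-1|=|f'(x)-1|\le \mu(f)$ off $U$; after, if necessary, shrinking $U$ further so that some linear piece of $f$ realizing $\mu(f)$ is not entirely contained in $U$, this gives the equality $d'_{C^1}(g) = \mu(f)$.

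The single delicate point is arranging the boundary-match integral condition and the pointwise bound $|g'-1| \le \mu(f)$ simultaneously. The convex-combination ansatz automatically enforces the bound, while the one scalar degree of freedom in the shape of $\phi_i$ absorbs the integral condition; this is the whole content of the argument.
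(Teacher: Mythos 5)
The paper's own proof is only a brief sketch (``interpolate the graph of $f'$ to be continuous; with some careful choice the indefinite integral gives the desired $C^1$-map''), and your argument is precisely a careful implementation of that idea: the convex-combination ansatz $g' = (1-\phi_i)m_i^- + \phi_i m_i^+$ does the interpolation and automatically yields $|g'-1|\le\mu(f)$, while the normalization $\int_{a_i}^{b_i}\phi_i = b_i - x_i$ is exactly the ``careful choice'' needed so that the integral of $g'$ rejoins $f$ at $b_i$. Your proposal is correct and follows essentially the same approach as the paper, just spelled out in full.
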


\begin{proof}
We do not give an explicit construction of $g$, 
but just describe an idea of the proof.
Look at the graph of $f'$. It is discontinuous. 
Then interpolate the graph to be continuous. 
Now, with some careful choice the
indefinite integral of interpolated function
gives the desired $C^1$-map.
\end{proof}

Now we give the proof of Proposition \ref{p.reduPL}.

\begin{proof}
Let $(f_n, g_n)\subset \mathcal{P}$ 
be the solution of Question \ref{q.PL} 
with hiding region $K_n$. 
First, we see that we can assume 
$c(f_n), c(g_n) \cap \partial (K_n) = \emptyset$,
since if not by applying Lemma \ref{l.plgene} 
we can change the position of $c(f)$ keeping 
the hiding region without changing $\mu(f)$ and $\mu(g)$.
Then, under the 
assumption $c(f_n), c(g_n) \cap \partial (K_n) = \emptyset$,
we apply Lemma \ref{l.smoothing} to each $ (f_n, g_n)$,
letting $U_n$ so small that $U_n$ 
does not touch the boundary of $K_n$.
Now the resulted sequence of $C^1$-maps 
has $K_n$ being hiding region, 
which converges to the identity in the $C^1$-topology 
by Proposition \ref{p.boot}.
\end{proof}

\section{Strategy for construction: Localization}

\subsection{Strategy}
In this subsection, we explain the rough idea of the proof of
the Theorem. 

The main difficulty of the construction comes from the non-abelian behavior 
of maps. In general, $f$ and $g$ are highly non-commutative. 
Thus a perturbation to one map may gives rise to some unexpected 
results. To overcome this problem, we first start from a dynamics 
which have a lot of abelian behaviors.
More precisely, we start $f$ and $g$ being translations: 
Imagine the situation where $f(x) = x+a$ and $g(x) = x-b$. 
This dynamics is easy to understand. 
An important quantity which governs this dynamics 
is the ratio of the translation $a/b$. If this value is irrational, 
every point has dense orbit and if it is rational, the dynamics has 
a good fundamental domain. 
The convenient situation for us is the rational case. 
Our goal is to construct hiding regions. 
For that, we just need to take a interval from the fundamental domain 
and take its images. Then it turns to be an invariant region. 

However, this is not enough for our purpose. 
We required that $f$ has contracting fixed point at $0$.
Thus we need to make the graph of $f$ come near the diagonal,
and this gives rise to another problem. To approach the diagonal, 
$f$ must have some hyperbolic (contracting), chaotic region, 
which destroys the abelian property and brings some problem. 
Our strategy to circumvent this problem is the ``divide and conquer."
We first start from the translation  $f(x) = x+a$ and $g(x) = x-a$
(where the ratio is equal to one). At this moment, we can calculate 
how large contracting behavior is required to reach the diagonal. 
Then we prove a proposition which says that
``if the hyperbolic behavior 
is divided into sufficiently small pieces, then we can construct 
a local hiding region by adding some buffer regions 
around the hyperbolic regions." 
See the argument in Section \ref{ss.conc}.

Unfortunately, at this moment we encounter another type of difficulty. 
The ``smallness" mentioned above involves the size of fundamental domain and 
it is determined by the denominator of the ratio of translation
(see the statement of Proposition \ref{p.exquant}). 
If the size of fundamental domain is too small, then we must make larger 
perturbation and that can make the resulted dynamics away from the identity map.
Here we have two contradicted demands: We want to divide the hyperbolic 
regions sufficiently small, keeping the denominators of ratio
which appear in the decomposition relatively small. 
Our first step to find such a convenient sequence using elementary number theory
(see Proposition \ref{p.goodrat}). 

Then the proof is reduced to the local problems 
(Proposition \ref{p.exrunway}, \ref{p.exconn} and \ref{p.exquant} ), 
which are the main theme after section 4. 

\subsection{Three local models}

In this section, we introduce three local IFSs called runway, connector and 
quantum leap. 

\begin{defin}
A \emph{runway} is a semigroup acting on $\mathbb{R}_{\geq 0}$
generated by $(f, g)$, where $f, g : \mathbb{R}_{\geq 0} \to 
\mathbb{R}_{\geq 0}$ satisfies the following:
\begin{enumerate}
\item $f(x) =(1-1/n)x$ in some neighborhood of $0$, where $n$ is 
a positive integer.
\item There exists $C >0$ such that $f(x) = x -1$ on $[C, +\infty)$. 
We call this interval \emph{right translation interval}.
\item $g(x) = x + d$, where $d$ is some positive integer.
\end{enumerate}
\end{defin}

\begin{defin}
Let $r, s \in \mathbb{Q}_{\geq 1}$.
An $(r, s)$-\emph{quantum leap} is a semigroup
acting on $\mathbb{R}$ and generated by
$(f, g)$ such that there exists $C >0$ 
satisfying the following:
\begin{enumerate}
\item $f(x) =x + r$ on $(C, +\infty)$.
We call this interval \emph{right translation interval}.
\item $f(x) = x + s$ on $(-\infty,  C]$.
We call this interval \emph{left translation interval}.
\item Outside $[-C, C]$, $g(x) = x-1$.
\end{enumerate}
In the case where $r=s$, we call $(f, g)$ \emph{$r$-connector}.
\end{defin}

Let $S^1 := \mathbb{R} / \mathbb{Z}$.
We define a map $\psi_{m, n}$ from  $S^1$ to
the target $\mathbb{R}_{\geq 0}$ or $\mathbb{R}$
by the formula $\psi_{m, n}(x) := (x+m)/n$, where $x \in [0, 1)$.
Let $L \subset S^1 \setminus \{ 0\}$ be a compact set. 
$K$ is said to be {\it periodic on the left translation area with
 shape $L$ and period $1/n$} 
if there exists $C$ such that for every 
$m+1 < C$, 
$K \cap [m/n, (m+1)/n) = \psi_{m, n}(L)$ holds.
We define the notion {\it periodic at the right translation area} similarly. 

Then, we prove the following three propositions.
\begin{prop} \label{p.exrunway}
For every natural number $\omega \geq 3$,
there exists a runway $(f, g)$ satisfying the following:
\begin{enumerate}
\item $\mu(f), \mu(g) \leq 1/\omega$.
\item $(f, g)$ has a hiding region $K \subset \mathbb{R}_{\geq 0}$ which is periodic 
on the right translation area with period $1$.
\end{enumerate}
\end{prop}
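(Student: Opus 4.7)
The plan is to give an explicit construction of $f$, $g$ and the hiding region $K$. Set $n := \omega$ and define $f$ to be the piecewise-linear map with $f(x) = (1 - 1/n)x$ on $[0, n]$ and $f(x) = x - 1$ on $[n, +\infty)$; this is a monotone homeomorphism of $\mathbb{R}_{\ge 0}$ satisfying the runway axioms (the neighborhood of $0$ on which the formula $(1-1/n)x$ holds is $[0, n]$, and the right translation interval is $[n, +\infty)$). Take $g(x) = x + 1$. Then $\mu(f) = 1/n = 1/\omega$ and $\mu(g) = 0$, so condition~(1) of the proposition holds.

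For the hiding region, unpacking the two hiding conditions with these explicit $f$ and $g$ gives the requirements $f^{-1}(K) \supset K$ (since $f$ is onto) and $K \cap [1, +\infty) \subset K + 1$. On the right translation area $[n-1, +\infty)$, both $f^{-1}$ and $g^{-1}$ act as the integer shifts $y \mapsto y + 1$ and $y \mapsto y - 1$, so any set of the form $(L + \mathbb{Z}) \cap [n-1, +\infty)$ with $L \subset [0, 1)$ closed is automatically invariant there, giving the required periodicity with period $1$. The only real difficulty is the action of $f^{-1}$ on the contraction area $[0, n-1]$, where it is the stretch $y \mapsto ny/(n-1)$. I take $L = [\alpha, \beta] \subset [0, 1)$ a short closed interval (with $\beta - \alpha$ to be fixed below) and define $K$ to be the smallest subset of $\mathbb{R}_{\ge 0}$ that contains $(L + \mathbb{Z}) \cap \mathbb{R}_{\ge 0}$ and is closed under $f^{-1}$ (wherever defined) and under $g^{-1}$ (wherever $y \ge 1$). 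By construction $K$ satisfies both hiding conditions and agrees with $(L + \mathbb{Z}) \cap [n-1, +\infty)$ on the right translation area.

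The main obstacle is showing that $K$ is a proper region: a locally finite disjoint union of closed intervals with nonempty interior, not equal to $\mathbb{R}_{\ge 0}$. By the $g^{-1}$-closure the whole structure is essentially captured by $K \cap [0, 1)$, so it suffices to bound its total length. Tracking the orbit of $L + m$ under $\langle f^{-1}, g^{-1} \rangle_{+}$, each application of $f^{-1}$ on the contraction area multiplies widths by $n/(n-1) > 1$, but any single interval can undergo only finitely many such stretches before being pushed into the translation area, where further applications of $f^{-1}$ and $g^{-1}$ become pure integer shifts that leave lengths unchanged. Projecting all orbit representatives back to $[0, 1)$ by suitable $g^{-1}$-shifts produces a family of intervals whose total length is bounded, via a geometric-series estimate, by $C(n) \cdot (\beta - \alpha)$ for a constant $C(n)$ depending only on $n$. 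Choosing $\beta - \alpha < 1/C(n)$ leaves an open gap in $K \cap [0, 1)$, and this gap propagates through every unit interval by the $g^{-1}$-closure, giving $K \ne \mathbb{R}_{\ge 0}$. The same geometric decay gives local finiteness, so $K$ is a region.

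I expect the hardest step to be the quantitative bound $|K \cap [0, 1)| \le C(n)(\beta - \alpha)$: this requires a careful combinatorial enumeration of the orbit branches contributing to $[0, 1)$, organized by the sequence of stretches in the contraction area. Once this estimate is in hand, periodicity on the right translation area, the two hiding inclusions, and local finiteness all follow directly from the construction.
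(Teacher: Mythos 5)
Your choice of $f$ coincides with the paper's auxiliary map $f_1$, and $g(x)=x+1$ is a legitimate runway translation, so item (1) is fine. The gap is in the key length estimate $|K\cap[0,1)| \le C(n)(\beta-\alpha)$ for the minimal set $K$ closed under $f^{-1}$ and $g^{-1}$; this bound is false, and the reason is precisely the loop you dismiss. Your claim is that an interval ``can undergo only finitely many stretches before being pushed into the translation area, where further applications of $f^{-1}$ and $g^{-1}$ become pure integer shifts.'' But once an interval sits in the translation area $[n-1,\infty)$, you may apply $g^{-1}$ repeatedly to shift it back into the contraction zone $[0,n-1]$, apply $f^{-1}$ there to stretch it by another factor $n/(n-1)$, push it out again, shift back, and so on indefinitely. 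Projecting the orbit to $[0,1)$, the dynamics you must close up under is essentially $x\mapsto \tfrac{n}{n-1}x \bmod 1$, a piecewise expanding map; for a generic short interval $L$, the forward orbit of $L$ under this map has intervals of ever-growing length whose union is eventually all of $[0,1)$. Consequently the minimal set $K$ containing $(L+\mathbb Z)\cap\mathbb R_{\ge 0}$ and closed under $f^{-1}, g^{-1}$ is not a proper region (in fact it is all of $\mathbb R_{\ge 0}$, or at best dense with infinitely many components in a compact set). Shrinking $\beta-\alpha$ does not help: it only delays the step at which the stretched images exceed length $1$, it does not create the alignment needed for the orbit to stabilize on a finite family of intervals.

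This is where the paper's approach differs fundamentally and is not a matter of presentation. Instead of taking an arbitrary $L$ and closing up under the \emph{expanding} semigroup $\langle f^{-1},g^{-1}\rangle_+$, the paper builds a finite ``$\omega$-template'' on $[0,\omega]$ from the \emph{forward} orbit of the single boundary point $\omega$ under the \emph{contracting} maps $f$ and $g$ (this orbit exits $[1,\omega]$ after finitely many steps and therefore terminates), and proves that the gaps between consecutive orbit points satisfy the hiding property. The template is a very specific configuration in which every stretch $f^{-1}(T_i)$ lands inside some other template interval, which is exactly the alignment your construction lacks. Moreover the template shape on $[0,\omega]$ is not $1$-periodic, so to produce a $1$-periodic tail the paper does \emph{not} use the unmodified $f_1$: it perturbs $f_1$ to $F$ on finitely many intervals using the connecting maps of a $1/\omega$-equivalent sequence from $T_0$ to $T_m$, and correspondingly takes $g(x)=x+d$ with $d=\omega+i_0$ large enough to accommodate the transition. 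Your proposal omits both the template and this interpolating perturbation, and as a result the required hiding region simply does not exist for the unperturbed pair $(f_1,\,x\mapsto x+1)$ with a generic short $L$.
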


\begin{prop}\label{p.exconn}
Given $\omega \in \mathbb{N}_{>0}$, $r = p/q \in \mathbb{Q}_{\geq 1}$
and two regions $K_1, K_2 \subset S^1 \setminus \{ 0 \}$,
there exists an $r$-connector $(f, g)$
satisfying the following:
\begin{enumerate}
\item $\mu(f), \mu(g) < 1/\omega$.
\item $(f, g)$ has a hiding region $K$
which is periodic on the left translation with 
shape $K_1$, period $1/q$, and 
on the right translation area with 
shape $K_2$, period $1/q$.
\end{enumerate}
\end{prop}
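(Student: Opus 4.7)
The plan is to realize the $r$-connector by taking the middle region $[-C, C]$ long enough that the shape transition from $K_1$ on the left to $K_2$ on the right can be spread across many small local perturbations of the background translations, each of slope-deviation bounded by $1/\omega$. Writing $r = p/q$ in lowest terms, I would fix a large integer $N$ (to be chosen at the end), set $C = N/q$, and subdivide the middle into the $2N$ fundamental intervals $I_m := [m/q, (m+1)/q)$ for $m = -N, \dots, N-1$. On the outer translation areas, I then set $f(x) = x + r$ and $g(x) = x - 1$, as required by the definition of an $r$-connector.

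Next I would describe the candidate hiding region $K$ block by block: in each $I_m$, place a prescribed pattern $\psi_{m,q}(L_m)$ for some shape $L_m \subset [0,1)$, with $L_m = K_2$ for all $m \geq m_R$ (right area) and $L_m = K_1$ for all $m \leq m_L$ (left area), and with an interpolating sequence $L_{m_L+1}, \dots, L_{m_R-1}$ realizing the transition between $K_1$ and $K_2$ through a sequence of elementary moves (for instance, translating one boundary point of a component interval by a small amount, or inserting or deleting a small component). Most of $[-C, C]$ would be covered by $f$ acting as translation by $r$ and $g$ as translation by $-1$; but at certain $I_m$'s, $f$ is given a localized slope perturbation that carries $\psi_{m,q}(L_m)$ onto $\psi_{m+p,q}(L_{m+p})$, and at other $I_m$'s, $g$ has a similar perturbation carrying $\psi_{m,q}(L_m)$ onto $\psi_{m-q,q}(L_{m-q})$. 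The hiding condition $K \subset f(K)$ and $K \subset g(K)$ then holds by construction: on the translation areas it is automatic thanks to $(1/q)$-periodicity, and in the middle it holds block-by-block since the image of each prescribed pattern is by design the next prescribed pattern.

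The main obstacle I foresee is the combinatorial arrangement of the interpolation. One must decompose the symmetric difference between $K_1$ and $K_2$ into finitely many elementary moves and distribute these moves between the $f$-schedule (which shifts block indices by $+p$) and the $g$-schedule (which shifts indices by $-q$), ensuring that (i) each middle interval $I_m$ carries a nontrivial perturbation in at most one of $f$ or $g$, so the two maps do not interfere on the same block, and (ii) the resulting interpolated shapes $L_m$ fit together consistently with the prescribed boundary data $K_1$ and $K_2$. Since $\gcd(p, q) = 1$, the combination of shifts $+p$ and $-q$ generates all of $\mathbb{Z}$, so reachability of the intermediate labels is not an issue; the real work is to schedule the elementary moves explicitly. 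Once this scheduling is done, each elementary move corresponds to a slope deviation of order $1/N$, and choosing $N$ large enough (depending on $\omega$ and the geometric complexity of $K_1, K_2$) yields $\mu(f), \mu(g) < 1/\omega$, completing the construction.
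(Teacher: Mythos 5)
Your high-level plan is the same as the paper's: take the middle region $[-C,C]$ large, subdivide it into fundamental intervals of length $1/q$, keep $f$ and $g$ mostly pure translations there, and place small localized perturbations together with a block-by-block interpolation of shapes from $K_1$ to $K_2$. You also correctly identify the notion you need: what you call ``elementary moves'' is exactly the $\varepsilon$-equivalence machinery of Lemma \ref{l.connect}, which the paper invokes to produce a sequence of regions $M_0 = K_1, M_1, \dots, M_{i_0} = K_2$ together with connecting PL maps $\{U_k\},\{V_k\}$ of slope deviation $< 1/\omega$. (A small caveat on your phrasing: the slope bound does not arise from dividing the transition into $N$ pieces so each has deviation $O(1/N)$; rather Lemma \ref{l.connect} directly furnishes a sequence of connecting maps with deviation already $<1/\omega$, and the number of blocks needed is then dictated by the length $i_0$ of that sequence.)

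The genuine gap is that you stop exactly where the proof has to do work: you announce that ``the real work is to schedule the elementary moves explicitly,'' and then don't. The $\gcd(p,q)=1$ reachability observation is true but beside the point --- the difficulty is not reaching every index but making the shape assignment \emph{simultaneously} compatible with the $-q$ shift of one map and the $+p$ shift of the other. The paper resolves this with a specific block structure: group indices into blocks $H\{k\}$ of size $p+q$, split as $H_l\{k\}$ (first $p$ indices) and $H_r\{k\}$ (last $q$ indices). The shape $M_k$ is assigned uniformly over the \emph{shifted} block $H_r\{k\}\cup H_l\{k+1\}$, so that a $-q$ shift from $H_l\{k\}$ stays inside the shape's level set (landing in $H_l\{k\}\cup H_r\{k-1\}$), while a $-q$ shift from $H_r\{k\}$ jumps exactly one level down (into $H_l\{k\}$), and the analogous facts hold for the $+p$ shift. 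With this structure, each fundamental interval carries a nontrivial perturbation of exactly one map ($V_k$ for the $-1$ translation on $H_r\{k\}$, $U_k$ for the $+r$ translation on $H_l\{k\}$), and each shape transition is bridged by precisely one connecting map of the $\varepsilon$-equivalent sequence. Without spelling out this schedule (or an equivalent one), you have not established that a consistent assignment exists, so the argument as written is incomplete.
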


To state the third one, we need some preparation. 
Recall that the {\it Farey series} $\mathfrak{F}_n$  
 is a finite ascending sequence of 
rational numbers whose irreducible representation 
is given by integers do not exceed $n$. 
See for example \cite{HW} for detail.
Then, the third one is the following:
\begin{prop}\label{p.exquant}
For every $\omega \geq 3$,
there exists $N (\omega) = N \in \mathbb{N}_{>0}$ such that 
for every $n \geq N $ the following holds: 
If  $r = h/k > r' = h'/k'$ 
are adjacent numbers in $\mathfrak{F}_n$
(we assume $h/k$, $h'/k'$ are irreducible representations ), 
then for every non negative 
integer $m$ there exists an $(r+m, r'+m)$-quantum leap 
$(f, g)$ satisfying the following:
\begin{enumerate}
\item $\mu(f), \mu(g) \leq 1/\omega$.
\item $(f, g)$ has a hiding region $K \subset \mathbb{R}$ which is periodic 
on the left translation area
with period $1/k$  and 
on the right translation area with period $1/k'$.  
\end{enumerate}  
\end{prop}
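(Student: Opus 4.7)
I would prove the proposition by writing $f$ and $g$ down explicitly as piecewise-linear maps and then building the hiding region $K$ by hand, with the Farey identity $hk' - h'k = 1$ as the key combinatorial input. Set $g(x) := x - 1$ on all of $\mathbb{R}$, giving $\mu(g) = 0$ immediately. Choose $C := \omega(r - r')/2 = \omega/(2kk')$ and let $f$ be the continuous PL map equal to $x + r + m$ on $[C, \infty)$ and $x + r' + m$ on $(-\infty, -C]$, interpolated linearly on $[-C, C]$. Continuity of $f$ forces the middle slope to be $1 + 1/\omega$, so $\mu(f) = 1/\omega$; the map $f$ is strictly increasing hence a bijection of $\mathbb{R}$, and $f^{-1}$ acts as a contraction of ratio $\omega/(\omega + 1)$ on the image of the middle interval.

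\textbf{Reducing to periodic structure.} Since $f, g$ are bijections, the two hiding conditions are equivalent to $f^{-1}(K) \subset K$ and $K + 1 \subset K$. If I prescribe $K$ to be $1$-periodic on both translation areas, the $g$-condition is automatic there. On the right translation area, $f^{-1}$ acts as subtraction by $r + m$, so combined with $K + 1 = K$ and $\gcd(h, k) = 1$ I am forced to take $K$ to be $1/k$-periodic there; symmetrically, $K$ on the left must be $1/k'$-periodic. Since $hk' - h'k = 1$ yields $\gcd(k, k') = 1$, making $K$ in fact $1/(kk')$-periodic on both sides realizes both the stated periodicities ($1/k$ on the left, $1/k'$ on the right) simultaneously.

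\textbf{Gluing in the middle and main obstacle.} What remains is to define $K$ consistently across $[-C, C]$. The central region has length $2C = \omega/(kk')$, exactly $\omega$ steps of the refinement lattice $(1/(kk'))\mathbb{Z}$. I would choose a short seed $L \subset [0, 1/(kk'))$, take $K$ on the right to be its $1/(kk')$-periodic extension, define $K \cap [-C, C]$ as the preimage under $f|_{[-C, C]}$ of $K \cap f([-C, C])$, and finally define $K$ on the left as the $1/(kk')$-periodic extension of the shape that $K$ takes just to the left of the middle. The Farey relation --- equivalently, that $(1/(kk'))\mathbb{Z}$ is the common refinement of $(1/k)\mathbb{Z}$ and $(1/k')\mathbb{Z}$ --- is what ensures this gluing is consistent. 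The main obstacle is verifying this matching carefully: showing that the $f^{-1}$-contraction through the middle sends the right-hand $1/(kk')$-periodic pattern to a set which extends $1/(kk')$-periodically on the left, uniformly in $m \geq 0$ and in the adjacent pair chosen from $\mathfrak{F}_n$. The threshold $N(\omega)$ must be large enough that $L$ can be picked with positive length while $K$ stays a proper subset of $\mathbb{R}$.
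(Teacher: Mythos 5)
Your setup is on the right track: the piecewise-linear form of the maps, the slope $1+1/\omega$ on the transition region, the reduction to $f^{-1}(K)\subset K$ and $K+1\subset K$, and the use of $hk'-h'k=1$ to compare the lattices $(1/k)\mathbb{Z}$ and $(1/k')\mathbb{Z}$ via $(1/kk')\mathbb{Z}$ are all present in the paper's construction. But the ``plan'' skips over the step that actually carries the whole proof, and the way you propose to fill it does not work.

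The gap is in the gluing across $[-C,C]$. Your $f$ has constant slope $1+1/\omega$ there, so $f^{-1}$ is an affine contraction of ratio $\omega/(\omega+1)$ on $f([-C,C])$. If $K$ is $(1/kk')$-periodic on the right, then $K\cap[-C,C]:=f^{-1}(K\cap f([-C,C]))$ has period $\frac{\omega}{\omega+1}\cdot\frac{1}{kk'}$, which is not commensurable with $\frac{1}{kk'}$. Now consider $g^{-1}(K)=K+1\subset K$: the set $(K\cap[-C,C])+1$ lands entirely in the right translation area, and its period $\frac{\omega}{\omega+1}\cdot\frac{1}{kk'}$ cannot fit into the $(1/kk')$-periodic pattern there unless the seed $L$ is chosen with very special self-similar structure. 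The same obstruction appears for $f^{-1}$ at the left boundary, since $(K\cap[-C,C])-(r'+m)$ must land in the $(1/kk')$-periodic left pattern. Taking $N(\omega)$ large makes $C$ small and the contraction ratio $\omega/(\omega+1)$ \emph{closer} to $1$, but it never makes these two incommensurable periods equal, and the hiding conditions are exact set inclusions, not approximate ones. A short generic seed $L$ will simply fail, and your proposal offers no mechanism for choosing $L$ so that the contracted, shifted copies of the right pattern re-enter the periodic patterns on both sides.

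This is precisely where the paper introduces all of its machinery. The hiding region near the transition is built from an $\omega$-template (Section~\ref{s.temp}), a finite union of intervals in $[0,\omega]$ specifically designed so that each interval is covered both by $f(x)=(1-1/\omega)x$ (the contraction) and by $g(x)=x-1$ (the unit shift). This simultaneous covering is exactly the property that makes the inclusions survive the passage through the contraction. Because the template has a complicated fractal-like arrangement of intervals, it cannot immediately match a simply $(1/kk')$-periodic pattern on the translation areas; the paper therefore uses a ``geometry eater'' $h$ with $\mu(h)<\Omega$ of some finite length $\xi$ to gradually collapse the template geometry to a single periodic shape, and realizes this deformation as a $1/\omega$-equivalent sequence (Propositions~\ref{p.initiator}, \ref{p.elysian}) spread out over many fundamental domains, encoded as perturbations $U_k,V_k$ of $f_1$ and $g_1$ over the ranges $H\{k\}$. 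In particular the paper \emph{does} perturb the map $g$ (your $g=x-1$ is left unperturbed), and the hiding region is $1/k$-periodic on one side and only becomes $1/(kk')$-periodic on the other after the deformation has finished: it is not uniformly $(1/kk')$-periodic. Finally, the role of $N(\omega)$ in the paper is to guarantee, via Lemma~\ref{l.farey} and Proposition~\ref{p.goodrat}, that $\tau=\frac{1}{\min\{k,k'\}(r-r')}=\max\{k,k'\}\geq\omega+\xi+2$, i.e.\ that there is enough room in a fundamental domain of length $1/\min\{k,k'\}$ to fit both the template and the $\xi$ steps of the geometry eater; ``$L$ has positive length'' is far from the actual constraint. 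So the proposal has the right ambient objects but is missing the combinatorial core --- the template, its invariance under both generators, and the spread-out deformation to a periodic shape --- and the single-step gluing you propose cannot be made to close.
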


Compared to the previous two propositions, 
the intuitive meaning of Proposition \ref{p.exquant} is less clear. 
We will discuss it in the end of Section \ref{ss.farey}.
We do not give the proof of these three propositions here.
Proposition \ref{p.exrunway} will be proved in Section \ref{s.crun},
Proposition \ref{p.exconn} will be in Section \ref{s.con}, 
and Proposition \ref{p.exquant} in Section \ref{s.qua}.
In the rest of this section, we see how to prove Theorem assuming these propositions.

\subsection{Some properties of Farey sequences} \label{ss.farey}

In this subsection, we give a brief review of Farey sequence 
and explain why Farey sequence is useful for us. 
First, we cite some well-known properties of Farey sequences.
For the detail, see for example \cite{HW}.

\begin{lem} \label{l.farey}
Let $h/k > h'/k'$ be irreducible representations of 
two consecutive terms in $\mathfrak{F}_n$. Then,
\begin{enumerate}
\item $h/k -h'/k' = 1/(kk')$.
\item $ k+ k' \geq n+1$.
\end{enumerate}
\end{lem}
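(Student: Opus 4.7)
The plan is to prove (1) by induction on $n$ and then derive (2) as a short corollary via a mediant argument. Throughout I will use the Stern--Brocot description of how $\mathfrak{F}_{n+1}$ is obtained from $\mathfrak{F}_n$: one inserts, between each pair of consecutive terms $h'/k' < h/k$ with $k+k'=n+1$, the \emph{mediant} $(h+h')/(k+k')$, and otherwise leaves the sequence unchanged.

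For (1), I would prove the equivalent determinant identity $hk' - h'k = 1$ for every pair of consecutive terms $h'/k' < h/k$ in $\mathfrak{F}_n$; dividing by $kk'$ then gives the stated formula. The base case $n=1$ is immediate since $\mathfrak{F}_1=\{0/1,1/1\}$ and $1\cdot 1-0\cdot 1=1$. For the inductive step, I assume the identity at level $n$ and check it for the two new consecutive pairs $h'/k'<(h+h')/(k+k')$ and $(h+h')/(k+k')<h/k$ produced by inserting the mediant. The core computation is
\[
h(k+k') - (h+h')k \;=\; hk'-h'k \;=\; 1, \qquad (h+h')k' - h'(k+k') \;=\; hk'-h'k \;=\; 1,
\]
so both new pairs satisfy the determinant identity. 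The same equations also force $\gcd(h+h',k+k')=1$, i.e.\ the mediant is already in reduced form, so no renormalization is needed.

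For (2), I argue by contradiction. Suppose $k+k'\leq n$ and set $\mu := (h+h')/(k+k')$. Using (1) one computes
\[
\frac{h}{k} - \mu \;=\; \frac{hk'-h'k}{k(k+k')} \;=\; \frac{1}{k(k+k')}, \qquad \mu - \frac{h'}{k'} \;=\; \frac{hk'-h'k}{k'(k+k')} \;=\; \frac{1}{k'(k+k')},
\]
so $h'/k' < \mu < h/k$. Since $\mu$ is a rational number with denominator (in any representation, a fortiori its reduced one) at most $n$, it lies in $\mathfrak{F}_n$, contradicting the assumption that $h/k$ and $h'/k'$ are consecutive there. Hence $k+k'\geq n+1$.

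The only real obstacle is the bookkeeping in the inductive step of (1), namely simultaneously tracking that the inserted mediant is automatically reduced and that the determinant identity transfers to both newly created adjacent pairs; once (1) is in hand, (2) is essentially one line.
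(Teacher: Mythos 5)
The paper does not prove this lemma at all: it states these as ``well-known properties of Farey sequences'' and refers the reader to Hardy--Wright \cite{HW}, where they appear as Theorems 28 and 30. So there is no proof in the paper to compare against; any proof you give is, in that sense, ``a different route.'' Your argument for item (2) is clean and essentially standard: the mediant $(h+h')/(k+k')$ lies strictly between $h'/k'$ and $h/k$ (this needs only $hk'-h'k>0$, not even item (1)), and has a representative with denominator $k+k'\leq n$, so it would have to appear between them in $\mathfrak{F}_n$, contradicting adjacency. That part is self-contained and correct.

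The problem is the proof of item (1). You take as given the Stern--Brocot insertion description of $\mathfrak{F}_{n+1}$ (new terms are exactly the mediants of consecutive pairs in $\mathfrak{F}_n$ with $k+k'=n+1$, inserted between them) and then run an induction. But this insertion description is not more elementary than the determinant identity $hk'-h'k=1$; in Hardy--Wright and most standard treatments the mediant/insertion property is \emph{deduced} from the determinant identity (e.g.\ given three consecutive terms, the two determinant equations $kh''-hk''=1$ and $k''h'-h''k'=1$ combine to give $h''(k+k')=k''(h+h')$, i.e.\ the middle one is the mediant). Using the insertion property to prove (1) therefore risks circularity, and at the very least hides the real content inside the cited black box. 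To make this rigorous you would either (a) first build the Stern--Brocot tree independently, prove the determinant identity there by the mediant induction, and then identify the truncated tree with $\mathfrak{F}_n$ (this identification is the nontrivial step), or (b) follow the Hardy--Wright route: given $h/k\in\mathfrak{F}_n$, solve $kx-hy=1$ with $n-k<y\leq n$ and show directly that $x/y$ is the Farey successor of $h/k$. As written, your sketch for (1) proves the identity modulo a fact of comparable depth.
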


As a direct consequence of these properties, 
we have the following:
\begin{prop}\label{p.goodrat}
For every positive real number $\lambda$,
there exists $N(\lambda) = N \in \mathbb{N}_{>0}$
such that for every $n \geq N$ we have the following:
Let $r = h/k > r' = h'/k'$ 
be adjacent numbers in $\mathfrak{F}_n$
(we assume they are irreducible representations). 
Then we have the following inequality:
\[
 r -r' <  \frac{\lambda }{\min \{k, k'\}}.
\]
Furthermore, the following number 
\[
\frac{1}{ \min \{k, k'\}} \cdot  \frac{1}{(r -r')} 
\]
is an integer.
\end{prop}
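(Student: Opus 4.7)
The plan is to deduce both claims directly from the two properties of Farey neighbours listed in Lemma \ref{l.farey}, treating the ``integer'' claim (the easier one) first and then the ``smallness'' claim.

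For the integer claim, observe that by Lemma \ref{l.farey}(1) we have $r - r' = 1/(kk')$, so
\[
\frac{1}{\min\{k,k'\}} \cdot \frac{1}{r-r'} \;=\; \frac{kk'}{\min\{k,k'\}} \;=\; \max\{k,k'\},
\]
which is an integer regardless of $n$. So this part of the statement requires no choice of $N$ at all; it is purely an algebraic identity.

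For the inequality $r - r' < \lambda/\min\{k,k'\}$, rewrite it using $r - r' = 1/(kk')$ as $1/\max\{k,k'\} < \lambda$, i.e.\ $\max\{k,k'\} > 1/\lambda$. Now use Lemma \ref{l.farey}(2): $k + k' \geq n+1$, which forces $\max\{k,k'\} \geq (n+1)/2$. Therefore, choosing $N = N(\lambda)$ to be any integer with $N + 1 > 2/\lambda$ (for example $N := \lceil 2/\lambda \rceil$) guarantees, for every $n \geq N$ and every Farey neighbour pair in $\mathfrak{F}_n$, that $\max\{k,k'\} \geq (n+1)/2 > 1/\lambda$, which is the required inequality.

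There is no real obstacle here; the proposition is essentially a repackaging of Lemma \ref{l.farey}. The only thing to be slightly careful about is that the inequality is strict, so $N$ must be picked so that $(n+1)/2$ strictly exceeds $1/\lambda$ rather than just meets it; this is handled by the ceiling in the definition of $N$.
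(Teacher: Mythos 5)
Your proof is correct and follows essentially the same route as the paper's: both deduce $\max\{k,k'\} \geq (n+1)/2$ from Lemma \ref{l.farey}(2), choose $N$ so that this exceeds $1/\lambda$, and then use $r-r' = 1/(kk')$ from Lemma \ref{l.farey}(1) for both the inequality and the observation that $\min\{k,k'\}\cdot(r-r') = 1/\max\{k,k'\}$. The only cosmetic difference is that you handle the integrality claim first and spell out the choice $N = \lceil 2/\lambda \rceil$ rather than the paper's $N > 2\lambda^{-1}-1$.
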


\begin{proof}
Let $\lambda >0$ be given.
Then take $N$ satisfying $N > 2\lambda^{-1} -1$. 
We show that for all $n \geq N$, the sequence
$\mathfrak{F}_n$ satisfies the desired properties. 
Indeed, given two consecutive terms satisfying $h/k > h'/k'$,
by item 2 of Lemma \ref{l.farey} and choice of $n$, 
we have $\max \{ k, k' \} \geq (n+1)/2 > \lambda^{-1}$. Thus 
by item 1 of Lemma \ref{l.farey}, we have $h/k -h'/k' = 
1/(kk') < \lambda/ \min \{k, k'\}$.
Furthermore, by a direct calculation,
one can see $\min \{k, k'\}\cdot (r -r') = \max \{k, k'\}^{-1}$.
Thus we have that the reciprocal of that is an integer.
\end{proof}

Now, let us discuss the intuitive meaning of 
Proposition \ref{p.exquant}.
A very rough idea of Proposition \ref{p.exquant} is that 
``if two rational numbers $s > t$ are very close, then 
there exists $(s, t)$-quantum leap with non-trivial hiding regions.''
Indeed, this statement sounds quite plausible, since 
in the case $s = t$, it is very easy to find non-trivial attracting 
regions (take any non-trivial sub interval contained in the 
fundamental domain). If $s$ and $t$ are very near, then 
we may well expect that such a ``perturbed'' system may 
exhibit similar property. 

However, the situation is not such simple. The meaning 
of smallness depends on not only the 
absolute difference between $s$ and $t$, 
but also the denominator 
of $s$ and $t$ in irreducible representations
(remember that it governs the size of fundamental domains). 

In general, we can prove the following;
\begin{prop}\label{p.omitted}
For every $\omega \geq 3$, 
there exists $\lambda (\omega) = \lambda \in \mathbb{R}_{>0}$ such that 
the following holds:
For $r_1, r_2 \in \mathbb{Q}_{\geq 1}$, take their 
irreducible representations  $r_1 := p_1/ q_1$, $r_2 := p_1 /q_1$.
If they satisfy $ 0 < r_1 -r_2 < \lambda /\min \{q_1, q_2\}$, 
then there exists an $(r_1, r_2)$-quantum leap
with the following properties:
\begin{enumerate}
\item $\mu(f), \mu(g) \leq 1/\omega$.
\item $(f, g)$ has a hiding region $K \subset \mathbb{R}$ which is periodic 
on the left translation area
with period $1/q_1$  and 
on the right translation area with period $1/q_2$.  
\end{enumerate}  
\end{prop}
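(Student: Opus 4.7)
The plan is to interpolate between $r_1$ and $r_2$ using a Farey sequence and to chain together Farey-adjacent quantum leaps supplied by Proposition \ref{p.exquant}, using $s$-connectors from Proposition \ref{p.exconn} to bridge the interfaces.

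I would first set $\lambda := 1/N(\omega)$, where $N(\omega)$ is the threshold produced by Proposition \ref{p.exquant}. For any two distinct rationals $r_1 > r_2$ with irreducible denominators $q_1, q_2$ one has the elementary lower bound $r_1 - r_2 \geq 1/(q_1 q_2)$; combining this with the hypothesis $r_1 - r_2 < \lambda/\min\{q_1, q_2\}$ forces $\max\{q_1, q_2\} > 1/\lambda = N(\omega)$. Setting $n := \max\{q_1, q_2\}$, both $r_1$ and $r_2$ lie in $\mathfrak{F}_n$ with $n \geq N(\omega)$, so Proposition \ref{p.exquant} becomes applicable to every pair of Farey-adjacent terms of $\mathfrak{F}_n$.

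Next I would enumerate the consecutive Farey terms of $\mathfrak{F}_n$ lying in $[r_2, r_1]$ as $r_1 = s_0 > s_1 > \cdots > s_K = r_2$ with $s_i = h_i/k_i$ in irreducible form. Proposition \ref{p.exquant} then supplies, for each $i = 1, \dots, K$, an $(s_{i-1}, s_i)$-quantum leap $(f^{(i)}, g^{(i)})$ (after an appropriate integer shift) satisfying $\mu(f^{(i)}), \mu(g^{(i)}) \leq 1/\omega$ together with the periodic hiding region prescribed there. I would place these pieces on disjoint intervals of $\mathbb{R}$ arranged from right to left, so that the rightmost piece has its right translation interval extending to $+\infty$ with rate $r_1$ and the leftmost one extends to $-\infty$ with rate $r_2$. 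Between consecutive pieces the translation rate on both sides coincides (it equals $s_i$), and I would splice in an $s_i$-connector from Proposition \ref{p.exconn}, choosing its boundary shapes $K_1, K_2 \subset S^1 \setminus \{0\}$ so that the connector's hiding region reconciles the left-edge periodic pattern of leap $i$ with the right-edge periodic pattern of leap $i+1$.

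The main obstacle is the combinatorial matching of hiding regions at the interior interfaces: the period $1/k_{i-1}$ of the hiding region on the left side of leap $i$ does not in general equal the period $1/k_{i+1}$ on the right side of leap $i+1$, while the available $s_i$-connector carries a fixed period $1/k_i$, so the connectors must be used to absorb this discrepancy. One expects to resolve this by opening up the construction in Proposition \ref{p.exquant} and choosing, on each interface region, a common refinement of the two adjacent periodic patterns that the connector can transport between them. Once the matching is arranged, the bounds $\mu(f), \mu(g) \leq 1/\omega$ follow automatically because the components act on disjoint intervals with pure translations elsewhere, and the asserted far-field periods $1/q_1$ on the left and $1/q_2$ on the right are inherited from the outermost leaps $(f^{(K)}, g^{(K)})$ and $(f^{(1)}, g^{(1)})$ respectively.
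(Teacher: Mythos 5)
The paper explicitly declines to prove Proposition \ref{p.omitted} (see the paragraph immediately after its statement: the proof ``requires more elaboration'' and is deferred to another article), so there is no proof in the paper against which to check your argument; I can only assess it on its own terms.

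Your strategy --- fix $\lambda := 1/N(\omega)$, use the elementary inequality $r_1 - r_2 \geq 1/(q_1 q_2)$ together with the hypothesis to force $\max\{q_1, q_2\} > N(\omega)$, set $n := \max\{q_1, q_2\}$, chain through the Farey fractions of $\mathfrak{F}_n$ (suitably translated) between $r_2$ and $r_1$ using the $(s_{i-1}, s_i)$-quantum leaps of Proposition \ref{p.exquant} with $s_i$-connectors from Proposition \ref{p.exconn} in between --- is a clean reduction of Proposition \ref{p.omitted} to the proved special case, and it is precisely the assembly used in the paper's proof of the Theorem (Section 3.4). The definition of an $(r_1, r_2)$-quantum leap only constrains the two asymptotic translation rates, so the chained object does qualify, and $\mu \leq 1/\omega$ persists because each block satisfies it individually. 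Minor bookkeeping (the Farey sequence is a subset of $[0,1]$ while $r_1, r_2 \geq 1$, so one must translate by integers and, if $r_1, r_2$ straddle an integer, concatenate two translated copies that overlap at that integer) is easy to supply. One small remark: the paper itself hints at a \emph{different}, more ``intuitive, geometric'' proof of this proposition, so your reduction is a genuinely distinct route --- it trades geometric insight for combinatorial assembly and the already-established Proposition \ref{p.exquant}.

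The one concrete issue is your treatment of the interfaces. The worry that the left period $1/k_{i-1}$ of leap $i$, the right period $1/k_{i+1}$ of leap $i+1$, and the connector period $1/k_i$ are three unrelated numbers is, I believe, spurious, and the vague fix (``opening up the construction\ldots and choosing\ldots a common refinement'') would not work anyway, because Proposition \ref{p.exconn} gives connectors of a single fixed period $1/q$, not a transporter between two different periods. What actually happens is that in Proposition \ref{p.exquant} the hiding-region period on each end tracks the denominator of the translation rate on \emph{that same end}: the end carrying rate $h/k$ has period $1/k$ and the end carrying rate $h'/k'$ has period $1/k'$. (The apparent ``crossing'' arises from a left/right inconsistency in the paper between the abstract definition of quantum leap --- which places the larger rate $r$ on the right --- and the actual construction in Section 7, where $g_1(x)=x+r_1$ with the larger rate is on the left; the paper's own assembly in Section 3.4 only makes sense under the latter convention.) With that accounting the $s_i$-facing end of leap $i$ and the $s_i$-facing end of leap $i+1$ both carry period $1/k_i$, exactly matching the period of the $s_i$-connector, and the shapes $K_1, K_2 \subset S^1\setminus\{0\}$ to feed into Proposition \ref{p.exconn} are just the boundary shapes handed to you by Proposition \ref{p.exquant} on the two adjacent leaps. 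If you replace the hand-waving about common refinements with this direct matching, the argument closes.
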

This proposition is more general since there is less 
restrictions on the choice of $r_i$. 
However, the proof of this proposition requires more
elaboration. Thus in this article we only furnish the proof of 
Proposition \ref{p.exquant}. 
On the other hand, the proof of Proposition \ref{p.omitted}
seems to have broader possibility of further application 
for the statement itself and the proof has more intuitive, 
geometric flavors. So the author is thinking of presenting 
it in another place.

Finally, we would like to point out the following.
By virtue of Farey series, we could prove 
Proposition \ref{p.goodrat} in a simple way. 
Meanwhile, it is worth mentioning 
that the proof has some relationship 
with Diophantine approximation theory.
For example, if we change the denominator in the right hand side of 
the inequality in \ref{p.goodrat}
to $(\min \{q_n, q_{n+1}\})^{2}$, then the corresponding statement is false
for sufficiently small $\lambda$.

\subsection{Conclusion}\label{ss.conc}
Using Proposition \ref{p.exrunway},  
\ref{p.exconn} and \ref{p.exquant},
let us prove the Theorem. 

\begin{proof}[Proof of Theorem]
To prove Theorem,  
we only need to construct the non-minimal PL semigroup actions 
with arbitrarily small $\mu$, see Proposition \ref{p.reduPL}. 
We fix $\omega \geq 3$. 
Our goal is to construct $(f, g) \in \mathcal{P}$
with hiding region such that $\mu(f), \mu(g) \leq 1/\omega$.

First, by Proposition \ref{p.exrunway} 
we pick up a runway $(f_r, g_r)$ with hiding region 
and $\mu(f_r), \mu(g_r) \leq 1/\omega$. 
We denote the shape of the hiding region
in the left translation region by $R_{-1}$ 
and the size of translation of $g_r$ in the left translation 
by $d_\omega$.
We also fix $ N = N (\omega)$ in Proposition \ref{p.exquant}
and fix $N_0 \geq N$. by using Proposition \ref{p.goodrat}, 
We take a sequence of decreasing rational numbers
$\{r_{n}\}_{n = 0, \ldots, m_0-1}$ from $d_\omega$ to $1$
as follows: First take $\mathfrak{F}_{N_0} \subset [0, 1]$.
Then consider the translation 
\[
\mathfrak{F}_{N_0} +m  := \{ r +m \mid r \in  \mathfrak{F}_{N_0}\}
\subset [m, m+1].
\]
Then, re-order $\{\mathfrak{F}_{N_0} +m\}_{m=1,\ldots, d_{\omega} -1}$ to the descending order and call it 
$\{r_n\}_{n = 0, \ldots, m_0 -1}$. Thus we have $r_0 = d_{\omega}$
and $r_{m_0 -1} = 1$.

Now, by applying Proposition \ref{p.exquant}, 
for each $n=0, \ldots, m_0-1$ 
we take $(r_{n}, r_{n+1})$-quantum leap
with hiding regions satisfying $\mu(f_n), \mu(g_n) \leq 1/\omega$.
For each $(f_n, g_n)$,  we have the shape 
of the hiding region in the left and right translation. 
We denote them by $L_n$ and $R_n$ respectively.
Then we apply Proposition \ref{p.exconn} to
pick up connectors $(\tilde{f}_n, \tilde{g}_n)$ 
between $R_{n-1}$ and $L_n$ for $n = 0, \ldots, m_0 -1$, 
satisfying $\mu(\tilde{f}_n), \mu(\tilde{g}_n) < 1/\omega$.
Finally, we take a connector 
$(\tilde{f}_{m_0}, \tilde{g}_{m_0})$ which connects 
$R_{m_0-1}$ with $S$ where $S$ is 
some non-empty region in $[0, 1]$ 
which is symmetric with respect to the point $\{1/2\}$.

We assemble them. More precisely, 
we construct new IFSs on $\mathbb{R}_{\geq 0}$ as follows.
We start the assembly of the runway $(f_r, g_r)$ and 
the connector $(\tilde{f}_0, \tilde{g}_0)$ between $R_{-1}$ and $L_0$.
From the right translation area of the runway $(f_r, g_r)$, we pick up 
an interval $P := [p, p+d_{\omega}]$  $(p \in \mathbb{Z})$ such that 
$f_r(P)$, $g_r(P)$ are contained in the right translation area. 
Similarly, we pick up an interval
$P' := [p', p'+d_{\omega}]$ $(p' \in \mathbb{Z})$ in the 
left translation area of the connector $(\tilde{f}_0, \tilde{g}_0)$ satisfying 
that the image of it $\tilde{f}_0, \tilde{g}_0$ are contained in 
the translation area. 
Then, we restrict $(f_r, g_r)$  to $[0, p+d_\omega]$, 
$(\tilde{f}_0, \tilde{g}_0)$ to $[p', +\infty]$ and
take a quotient space by identifying $P$ and $P'$ in a natural fashion.
On this quotient space (which is naturally homeomorphic to $\mathbb{R}_{\geq 0}$), 
we can naturally define a new dynamical 
systems which was the glued map of runway and the connector. 
Finally, we take the projection of the hiding regions to this quotient space 
from the old ones: This turns to be a region in the new space and 
we can check that this is a hiding region for new dynamics
(this is why we needed the ``margin"). Note that this construction does not 
change the value of $\mu$.

We continue this construction: Paste the one above with 
quantum leap $(f_0, g_0)$, then paste it with $(\tilde{f}_1, \tilde{g}_1)$,
and continue this process to obtain a dynamics $(F, G)$ with
hiding region $H$ which is $1$-periodic on the right translation area with 
its symmetric with respect to $\{ 1/2 \}$.
Remember that the size of the translation of $F$ is $1$ and 
of $G$ is $-1$. Furthermore, 
Now we pick up the ``mirror image" of $(F, G)$. 
Namely, we take a two PL-map on the left half line 
$\tilde{F}(x) := -F(-x) $ and
$\tilde{G}(x) := -G(-x) $. Then the mirror image $\tilde{H} := -H$
is the hiding region for $(\tilde{F}, \tilde{G})$.
Finally, by assembling $(F, G)$ and $(\tilde{F}, \tilde{G})$
we obtain an IFS on the interval with hiding region.
A priori, the length of the interval of this resulted dynamics is 
very large. So we rescale it to make it $1$. 
Note the rescaling does not affect 
the value of $\mu$. Thus we completed the proof. 
\end{proof}

\section{Auxiliary constructions}
In this section, we give some Auxiliary constructions
to prove Proposition \ref{p.exrunway},  
\ref{p.exconn} and \ref{p.exquant}

\subsection{Template}\label{s.temp}

In this subsection, we formulate 
a special kind of regions called templates. 
They have combinatorial properties 
convenient to establish the hiding properties under the action of
a contracting map and translation map. 
Such a simultaneous hiding property 
is very useful for the construction of runways and quantum leaps. 
We also furnish the proof of the existence of such regions. 

\begin{defin}
We put $f(x) := (1-1/n)x$ and $g(x) := x -1$.
Let $n$ be an integer equal to or greater than $3$. 
An $n$-\emph{template} is a disjoint union 
of finite number of
closed intervals $\mathcal{T} := \coprod T_i$ in $(0, n)$ 
($T_i$ are ordered in the increasing order)
 satisfying the following:
\begin{enumerate}
\item $T_m$ is the only interval contained in $(n-1, n)$.
\item $T_0$ is the only interval  contained in $(0, 1)$.
\item  For every $T_i$ ($i = 0,\ldots, m-1$), 
there exists $i'$ such that $T_i \subset f(T_{i'})$ holds.
\item For every $T_i$ ($i = 0,\ldots, m-1$), 
there exists $i'$ such that $T_i \subset g(T_{i'})$ holds.
\end{enumerate}
Furthermore, for each $i$  $(0 \leq i \leq n -1)$,
we put $\mathcal{T}_i := \mathcal{T} \cap [i, i+1]$.
\end{defin}

\begin{rem}
\begin{enumerate}
\item Suppose $\mathcal{T} := \coprod T_i $ is an $n$-template. 
Then for each $k$ ($k =0, \ldots, n-1$), 
there exists $j_k$ such that $T_0  \subset f^k(T_{j_k})$.
\item By definition, we have $\mathcal{T}_0 = T_0$
and $\mathcal{T}_m = T_m$.
\end{enumerate}
\end{rem}

\begin{prop}
For every $n \geq 3$, there exists an $n$-template.
\end{prop}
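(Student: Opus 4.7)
The plan is to construct the template as the forward orbit of a small interval $T_0 \subset (0, 1)$ under the semigroup $\Gamma := \langle \sigma, \tau \rangle$, where $\sigma := f^{-1}$ satisfies $\sigma(x) = \alpha x$ (with $\alpha := n/(n-1) > 1$) and $\tau := g^{-1}$ satisfies $\tau(x) = x + 1$. Concretely I would take $T_0 := [a_0, a_0 + \delta]$ with $a_0 \in ((n-1)/n, 1)$ chosen generically and $\delta > 0$ small. The template will then consist of all orbit intervals $w(T_0) \subset (0, n-1)$ as the middle intervals (including $w = \mathrm{id}$, which gives $T_0$), together with one extra interval $T_m$ defined as the closed convex hull of the orbit intervals lying in $(n-1, n)$.

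Each $w \in \Gamma$ acts as an affine map $w(x) = \alpha^{K(w)} x + c(w)$, where $K(w)$ counts the $\sigma$'s in $w$ and $c(w) = w(0)$ is a non-negative sum of powers of $\alpha$ with exponents in $\{0, 1, \ldots, K(w)\}$, one for each $\tau$ in $w$. The condition $w(T_0) \subset (0, n)$ then forces $\alpha^{K(w)} a_0 < n$, bounding $K(w)$, and $c(w) < n$, bounding the number of $\tau$'s (each contributes at least $1$ to $c(w)$). Hence only finitely many distinct orbit intervals arise.

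The choices of $a_0$ and $\delta$ are made as follows. Genericity of $a_0$ (excluding a countable set of values) ensures $w(a_0) \neq n - 1$ for every relevant $w$, and also that distinct orbit points are distinct. Taking $\delta$ smaller than both $\min_w |w(a_0) - (n-1)|$ and the minimum gap between distinct orbit points (divided by $\alpha^{K_{\max}}$ as a safety factor) then guarantees that the orbit intervals are pairwise disjoint, none straddles $n - 1$, and all remain inside $(0, n)$. I also require $\delta < 1 - a_0$ so that $T_0 \subset (0, 1)$.

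Verification of the template axioms is then routine. For $w \neq \mathrm{id}$ one has $w(a_0) > 1$: if $K(w) \geq 1$ then $w(a_0) \geq \alpha a_0 > \alpha(n-1)/n = 1$, and if $K(w) = 0$ then $w(a_0) = a_0 + p > 1$ for a positive integer $p$. This gives the uniqueness of $T_0$ in $(0, 1)$. The uniqueness of $T_m$ in $(n-1, n)$ is built into the construction. Finally, conditions (3) and (4) hold because $\sigma(w(T_0)) = (\sigma w)(T_0)$ and $\tau(w(T_0)) = (\tau w)(T_0)$ are themselves orbit intervals, and are therefore contained in some template interval: a middle interval if the image lies in $(0, n-1)$, or $T_m$ if it lies in $(n-1, n)$. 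The main technical issue is the simultaneous calibration of $a_0$ (genericity) and $\delta$ (smallness), but each reduces to a standard exclusion argument involving a countable set and a finite positive minimum, respectively.
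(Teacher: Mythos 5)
Your construction is correct, and it takes a genuinely different route from the paper's. The paper starts from the single point $n$, pushes it forward under $f$ and $g$ (keeping only images landing in $[f^l(n), n]$), collects the resulting finite set $P$ of break points, and takes the open gaps between consecutive points of $P$ as a first approximation to the template; the hiding axioms (3)--(4) then rest on a lemma that every point of $P\setminus\{n\}$ is the $f$- or $g$-image of another point of $P$, so that preimages of gaps nest inside gaps. You instead seed a small interval $T_0$ near $1$ and take the template to be the forward orbit of $T_0$ under $\sigma = f^{-1}$ and $\tau = g^{-1}$, truncated to $(0,n-1)$, with a single terminal interval $T_m$ given by the convex hull of the orbit intervals that fall in $(n-1,n)$. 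With this design (3)--(4) come essentially for free: for a middle $T_i = w(T_0)\subset(0,n-1)$, one has $\sigma(T_i)=(\sigma w)(T_0)\subset(0,\alpha(n-1))=(0,n)$ and $\tau(T_i)=(\tau w)(T_0)\subset(1,n)$, both orbit intervals, hence either middle template intervals or contained in $T_m$. In short, the paper builds the template as the complement of a finite orbit of a point, and you build it as a finite orbit of a fattened point; your version makes the forward-invariance structural rather than something to verify by a chasing lemma, at the modest cost of calibrating $a_0$ and $\delta$. The calibration is sound, but one point deserves to be spelled out in a polished write-up: the set of relevant words $\{w : w(T_0)\subset(0,n)\}$ depends on $\delta$, so ``the minimum gap between distinct orbit points'' should be taken over the larger, $\delta$-independent finite set $\{w : w(a_0)\le n\}$ (which contains all the $\delta$-dependent sets) to avoid any appearance of circularity.
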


\begin{proof}
First, let $l$ be the smallest integer that satisfies $f^l(n) < 1$.
For a non-empty finite set of points  $X \subset [1, n]$,
We define 
\begin{align*}
C(X) &:= \cup_{x \in X} \{ f^i(x) \mid i >0, f^i(x) \in [f^l(n), n]  \}, \\
T(X) &:=  \cup_{x \in X} \{ g^i(x) \mid i >0, g^i(x) \in [f^l(n), n]  \}.
\end{align*}
If $X$ is an empty set, we put $C(X) = T(X) :=\emptyset$.
Note that these two sets are finite set contained in $[1, n]$.

For $C(X)$ and $T(X)$, we have the following estimate:
that $\max C(X) \leq \max X  -1/n$
and $\max T(X) =  \max X  - 1$ if $C(X)$, $T(X)$ are 
not empty sets. 
The first one comes from the 
fact that for every $x \in [1, n]$ we have $x -f(x)  = x/n \geq 1/n$,
and the second one is easy to see.

Now, we construct $E_k$ inductively as follows:
$E_0 :=\{ n\}$, $E'_m := E_m \cap [1, n]$ and 
$E_{m+1} := C(E'_m) \cup T(E'_m)$.
Since the maximum of $E_m$ decreases uniformly,
there exists $M$ such that 
$E'_M$ is non-empty and $E'_{M+1}$ is empty. 
Then take $\tilde{P} := \cup_{0 \leq m \leq M} E_{m}$,
$p_0 := \max (\tilde{P} \cap (0, 1))$ and 
$P := \{ p_0 \} \cup (\tilde{P} \cap [1, n])$. Note that $P$ is a finite set.
We claim that $P$ satisfies the following conditions:
For every $x \in P \setminus \{n\} $, there exists $q \in P$ such that
$f(q) =x$ or $g(q) = x$. It is clear for $x \in P \cap [1, n]$, since
$x$ belongs to $E_i$ ($i \geq1$) for some $i$ and the points which hit $x$
is not removed when we construct $P$.
If $x \in P \cap (0, 1)$, we can also check it since 
in this case we have $x = p_0$.

Align the elements of $P$ in the increasing order 
and denote the sequence by $\{p_i\}_{i = 0,\ldots, m}$.
Now let us take the sequence of open 
sets $ \{ P_{k} := (p_k, p_{k+1}) \mid  k=0, \ldots, m-1\}$. 
Note that, by construction, $P_m$ is the only interval in $(n-1, n)$
and $P_0$ is the only interval in $(0, 1)$.

We can check that these open sets satisfies the 
following condition: For  every $P_k$ ($k=0, \ldots, m-2$)
there exists $k' \in [0, m-1]$ such that $P_k \subset f(P_{k'})$.
Indeed, given $P_k = (p_k, p_{k+1})$, take  the largest $p_l$
in $P$ such that $f(p_l) \leq p_{k} $ holds 
(there exists at least one such $p_l$).
Then we show that $ f(p_{l+1}) \geq p_{k+1}$. 
Indeed, if not, we have $f(p_{l+1}) <  p_{k+1}$, in particular 
we have $f(p_{l+1}) \leq p_k$. 
But this contradicts to the choice of $p_l$.
Thus we have $P_k \subset f(P_{l})$.
By a similar argument, we can check that 
for every $P_k$ ($k=0, \ldots, m-2$)
there exists $k'$ such that $P_k \subset f(P_{k'})$.
Similarly, we can check that for  every $P_k$ ($k=0, \ldots, m-2$)
there exists $k'$ such that $P_k \subset g(P_{k'})$.

Thus these $\{P_k \}$ satisfies the hiding property both for $f$ and $g$.
To make them compact, we only need to shrink each $P_i$ a little bit. 
more precisely, first we shrink a little bit $P_0$ to obtain $T_0$.
Suppose we have constructed $T_k$ ($k = 0, \ldots, l$). Then
we construct $T_{k+1}$ as follows: $f(P_{k+1})$ and $g(P_{k+1})$
covers some of $T_{i}$. Then if we pick up $T_{k+1}$ sufficiently
close to $P_{k+1}$ so that $f(T_{k+1})$ and $g(T_{k+1})$ still cover them.
By continuing this process, we obtain $n$-template.
\end{proof}

\begin{rem}
The proof above is constructive, For example, Figure \ref{fig.aliint}
shows how $\{p_i\}$ in the proof align in $[0, 4]$ for $\omega =4$.
In general, for the right areas near $1$, there are many intervals.

\begin{figure}
\centering
\includegraphics{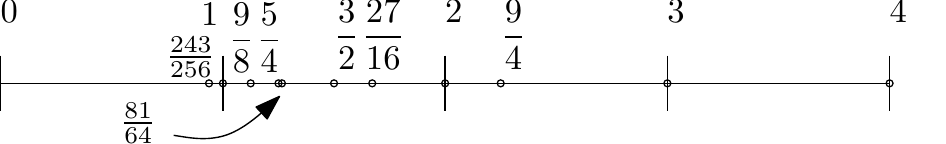}
 \caption{Alignment of $M_I$ for $\omega = 4$.}
\label{fig.aliint}
\end{figure}
\end{rem}

\subsection{Geometry deformation}

To construct the PL-maps with hiding region  whose 
slope is close to one, the basic idea 
is to modify the intervals gradually.
We give a precise definition of this notion. 

\begin{defin}
Suppose $K, L  \subset I$ are regions 
satisfying $K \subset L \subset \mathrm{int}(I)$ and $\varepsilon$
be a positive real number.
Then $K$ and $L$ are said to be {\it $\varepsilon$-equivalent}
if there exist two orientation preserving 
PL-homeomorphisms
$\varphi, \psi: I \to I$ such that the following holds:
\begin{itemize}
\item $\mu(\varphi)$, $\mu(\psi) < \varepsilon$.
\item $\varphi(K) \subset L$, $\psi(L) \subset K$.
\end{itemize}
\end{defin}

\begin{rem}
With the notion of $\varepsilon$-equivalence,
we can ``measure'' the closeness between 
two regions in $I$. Namely, we regard two regions are close if 
they are $\varepsilon$-equivalent for small $\varepsilon$. 
Note that this notion is essentially different from 
the Hausdorff distance. 
For example, take a connected interval $J \subset I$.
From $J$, remove a tiny interval centered at 
the middle point of $J$ and denote it by $J'$
(in other words, chop $J$ into two pieces). 
Then $J$ and $J'$ have very small Hausdorff distance
(as long as the removed intervals are small) 
but cannot be $\varepsilon$-equivalent for small $\varepsilon$.
\end{rem}

\begin{lem}\label{l.connect}
For every pair of regions $K, L \subset \mathrm{int}(I)$ 
and $\varepsilon >0$, there exists 
a sequence of regions $\{ \mathcal{M}_i\}$ ($i=0, \ldots, i_0$) satisfying the following:
$\mathcal{M}_0 =K$ and $\mathcal{M}_{i_0} = L$ and 
$\mathcal{M}_i$ and $\mathcal{M}_{i+1}$ are 
$\varepsilon$-equivalent for $i = 0, \ldots, i_0 -1$.
\end{lem}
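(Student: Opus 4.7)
The plan is to identify a short list of local modifications (``elementary moves'') on a region $M \subset \mathrm{int}(I)$ which each yield an $\varepsilon$-equivalent region, and then to build the sequence $\{\mathcal{M}_i\}$ as a finite concatenation of such moves, starting at $K$ and ending at $L$.

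I would use three types of elementary moves:
(i) a small \emph{inflation} or \emph{deflation} of one endpoint of a single component, preserving the combinatorial type (this realises both slow slides and small resizings);
(ii) an \emph{extraction} replacing an interval $[a,b]$ by $[a,b-\delta_1]\cup[b-\delta_2,b]$ with $0<\delta_2<\delta_1$;
(iii) its inverse, \emph{absorption}, which merges a small component with a neighboring component.
In each case one of the two regions is contained in the other, and the PL-homeomorphisms $\varphi,\psi$ realising the $\varepsilon$-equivalence can be written down explicitly: take $\varphi$ to be the identity (handling the containment direction trivially) and $\psi$ to be a piecewise linear map that is the identity outside a neighborhood of the modified component, has slope slightly below $1$ over the component in order to squeeze it into the smaller target region, and compensates with slope slightly above $1$ on the complementary intervals so as to satisfy $\psi(0)=0$, $\psi(1)=1$. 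Elementary estimates on these slopes show that the construction succeeds provided the change parameter is small compared with $\varepsilon$ times both the length of the active component and its distance to $\partial I$.

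Given these moves, the sequence between $K$ and $L$ is built in stages. First, perturbations pull all components of $K$ slightly into the interior so as to secure a uniform positive margin from $\partial I$. Second, a sequence of extractions spawns, next to components of $K$, enough small auxiliary components to match the total component count of $L$. Third, a long sequence of perturbations slides and resizes each component toward its designated target position and length in $L$, exploiting the fact that many small allowed moves compose to any bounded translation. Finally, absorptions eliminate any surplus components. Since $K$ and $L$ each have only finitely many components, the entire procedure terminates in finitely many steps.

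The main obstacle is the verification of $\varepsilon$-equivalence for the extraction move, where the geometric constraint that $\psi$ maps the one-component source into the larger of the two pieces of the target forces $\delta_1$ to be small compared with $\varepsilon$ times both the component length and its distance to $\partial I$, so only very small new components can be extracted per step. A secondary difficulty is bookkeeping: one must ensure that throughout the process the intermediate configurations remain valid regions, with no collisions among components and no contact with $\partial I$. This is handled by performing the operations in a left-to-right order and by keeping, at every stage, uniform positive gaps between components and uniform positive distances from $\partial I$.
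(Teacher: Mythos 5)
Your argument is essentially the paper's: build a finite chain of small local moves (resize/slide a component, and add or remove a small component to change the count), each realised as an $\varepsilon$-equivalence by a PL homeomorphism that is the identity off a neighbourhood of the active component. The only mechanistic difference is that the paper creates a new component by expanding a component $K_1$ with a near-identity map $\varphi$ and placing tiny new intervals in $\varphi(K_1)\setminus K_1$ (so the new region is a superset of the old), whereas your extraction carves a small gap out of $K_1$ (so the new region is a subset); both are valid, and the remaining sliding/resizing and bookkeeping steps coincide.
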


\begin{proof}
We only give the rough idea of the proof:
If the number of the connected components of $K$ and $L$ are 
equal, then it is easy: We just need to expand or shrink $K$ to $L$.
It is not difficult to change the number of connected components: 
Given an interval, we can increase the number by adding a tiny interval
(remember that regions are always assumed to be non-empty).

More precisely, we can show the following;
suppose $K$ is a region with $m$-connected 
component. Then, for every $\varepsilon > 0$ and $n >0$,
it is $\varepsilon$-equivalent to $K'$ with  
$(m+n)$-connected components.
Indeed, take a connected component $K_1$ of $K$. 
then, we pick up a map $\varphi$ which expands $K_1$, 
$\mu(\varphi) < \varepsilon$ such that outside some neighbor 
hood of $K_1$ disjoint from other 
connected components,  $\varphi$ is an identity map. 
Then, take (possibly small) $n$ intervals 
$\{ L_i\}_{i =0,\ldots, n-1}$ from 
$\varphi(K_1) \setminus K_1$ and put 
$K' := K \coprod \left( \coprod L_i \right)$. Then 
then the identity map and $\varphi$ give the $\varepsilon$-equivalence.
\end{proof}

\begin{defin}\label{d.eseq}
We call the sequence $\{\mathcal{M}_i\}$ in Lemma \ref{l.connect}
\emph{an $\varepsilon$-equivalent sequence}
between $K$ and $L$ and $i_0$ is called \emph{length} of this sequence. 
Furthermore, by definition 
there exist sequences of maps $\{\varphi_i\}$,  $\{\psi_i\}$
$(i = 0,\ldots, i_0 -1)$ such that 
$\mathcal{M}_{i+1} \subset \varphi_i(\mathcal{M}_i) $ and
$\mathcal{M}_{i} \subset \psi_i(\mathcal{M}_{i+1})$
for every $i$. 
We call these maps \emph{connecting maps}.
\end{defin}

\begin{rem}\label{r.connect}
In Lemma \ref{l.connect}, 
suppose $K$ and $L$ are regions that consist of 
single connected components and they satisfy $K \subset L$. 
Then we can take above $\mathcal{M}_i$ 
so that $\mathcal{M}_i \subset \mathcal{M}_{i+1}$ holds for 
every $i = 0, \ldots, i_0 -1$.
\end{rem}

\subsection{Geometry eater}\label{s.geoe}
Template is a convenient geometric configuration of intervals 
to obtain the hiding property.
However, in general they have very complicated geometries 
that it is far away from periodic rotation, which is our final destination. 
In this part we prepare a perturbation which enables 
us to annihilate such complexity. 

\begin{defin}
Let $\mathcal{T} := \coprod T_i \subset [0, \omega]$ be 
an $\omega$-template. 
Remember that we put $\mathcal{T}_i := \mathcal{T} \cap [i, i+1]$.
To denote $\pi(\mathcal{T}_i)$ and $\pi(T_i)$, where
$\pi : [0, \omega] \to I = [0, 1)$ is the natural projection 
under the identification 
$x \sim y$ if $x-y \in \mathbb{Z}$,
we also use the same symbol $\mathcal{T}_i$ or $T_i$.
Then, an orientation preserving homeomorphism $h : I \to I$ 
is called an $\varepsilon$-{\it geometry eater} if the following holds:
\begin{itemize}
\item $\mu(h) < \varepsilon$.
\item There exists $n_0 >0$ such that for each $\mathcal{T}_i$
($i = 0, \ldots, \omega -1$), 
$h^{n_0}(\mathcal{T}_i)$ is $\varepsilon$-equivalent to 
$T_m$.
\end{itemize}
We call $n_0$ \emph{length} of the geometry eater.

\end{defin}

We have the following:
\begin{prop}
For each $\omega$-template $\mathcal{T} := \coprod T_i \subset [0, \omega]$ 
and $\varepsilon >0$ there exists an $\varepsilon$-geometry eater $h$.
\end{prop}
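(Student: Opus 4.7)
The natural reduction is to Lemma \ref{l.connect}, which provides the basic building blocks of $\varepsilon$-equivalent sequences. My plan is to apply Lemma \ref{l.connect} once for each $i = 0, \ldots, \omega - 1$, producing an $\varepsilon$-equivalent sequence of regions in $I$ starting at $\pi(\mathcal{T}_i)$ and ending at (a region $\varepsilon$-equivalent to) $T_m$. After inserting trivial stages into the shorter sequences, I may assume all $\omega$ sequences share a common length $n_0$. This yields, for each $i$, a trajectory $\pi(\mathcal{T}_i) = \mathcal{M}_0^{(i)} \leadsto \mathcal{M}_1^{(i)} \leadsto \cdots \leadsto \mathcal{M}_{n_0}^{(i)}$, with $\mathcal{M}_{n_0}^{(i)}$ being $\varepsilon$-equivalent to $T_m$ and consecutive stages related by PL-homeomorphisms of $I$ of slope bounded by $1+\varepsilon$ in absolute deviation.

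The core task is then to realize all these trajectories as iterates of a single PL-homeomorphism $h : I \to I$ with $\mu(h) < \varepsilon$. The flexibility I would exploit is the non-uniqueness allowed by Lemma \ref{l.connect} and Remark \ref{r.connect}: by choosing the intermediate stages carefully, one can arrange that at each step $j$ the regions $\mathcal{M}_j^{(0)}, \ldots, \mathcal{M}_j^{(\omega-1)}$ occupy disjoint sub-intervals (\emph{tracks}) of $I$. Once the $\omega$ tracks are separated, a single PL-homeomorphism $h$ can be defined to act piecewise, using the corresponding connecting map on each track and the identity between tracks; the slope bounds then assemble to give $\mu(h) < \varepsilon$, and continuity at track boundaries follows from the identity action there. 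Iterating, $h^{n_0}(\pi(\mathcal{T}_i)) = \mathcal{M}_{n_0}^{(i)}$, which is $\varepsilon$-equivalent to $T_m$ by construction.

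The principal obstacle is ensuring that the $\omega$ tracks can actually be kept disjoint throughout all $n_0$ iterations, since the initial pieces $\pi(\mathcal{T}_i)$ may overlap in $I$ and the near-identity constraint $\mu(h) < \varepsilon$ limits how far points can drift in a single step. My proposed remedy is to absorb the separation process into the first portion of the trajectories themselves: allocate an initial block of stages in each $\mathcal{M}_j^{(i)}$ whose sole purpose is to translate and deform $\pi(\mathcal{T}_i)$ into its own reserved sub-interval of $I$, using only near-identity moves furnished by Lemma \ref{l.connect}. This may enlarge $n_0$ but is harmless since $n_0$ is unconstrained. Once the tracks are separated, the remainder of the construction proceeds as outlined, and the final iterate delivers the required $\varepsilon$-equivalences to $T_m$ uniformly in $i$.
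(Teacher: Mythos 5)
Your ``disjoint tracks'' scheme cannot work, and the obstruction is topological, not merely quantitative. The projections $\pi(\mathcal{T}_i)$ are not in general position---they are \emph{nested}. Indeed, from the hiding property of the template under $g(x)=x-1$, each $T_i\subset\mathcal{T}_k$ (with $k\leq\omega-2$) satisfies $T_i\subset T_{i'}-1$ for some $T_{i'}\subset\mathcal{T}_{k+1}$, which after projection gives $\pi(T_i)\subset\pi(T_{i'})$, hence
\[
\pi(T_0)=\pi(\mathcal{T}_0)\subset\pi(\mathcal{T}_1)\subset\cdots\subset\pi(\mathcal{T}_{\omega-1})=\pi(T_m).
\]
(This is exactly the inclusion implicitly used in the remark following the proposition, $h(\mathcal{T}_i)\subset h(\mathcal{T}_{i+1})$.) In particular every $\pi(\mathcal{T}_i)$ contains $\pi(T_0)$, so any two of them intersect. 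Since $h$ is a single homeomorphism, $h^j$ preserves intersections: if $A\cap B\neq\emptyset$ then $h^j(A)\cap h^j(B)\neq\emptyset$ for all $j$. Therefore, for \emph{any} choice of $h$ whatsoever, the sets $h^j(\pi(\mathcal{T}_0)),\ldots,h^j(\pi(\mathcal{T}_{\omega-1}))$ remain nested and can never be moved into disjoint tracks. Your ``remedy'' of spending initial stages to ``translate and deform $\pi(\mathcal{T}_i)$ into its own reserved sub-interval'' is precisely the step that is impossible: it asks one function to send a common point of $\pi(\mathcal{T}_0)$ and $\pi(\mathcal{T}_1)$ to two different places. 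Lemma~\ref{l.connect} provides $\omega$ \emph{independent} deformation sequences, but realizing them by iterates of a single map requires compatibility that the nested starting configuration flatly denies.

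The correct mechanism has to exploit the nesting rather than fight it, and this is what the paper does. Since $\pi(T_0)$ sits inside every $\pi(\mathcal{T}_i)$ and all of them sit inside $\pi(T_m)$, it suffices to take a single PL map $h$ with $\mu(h)<\varepsilon$ having an expanding fixed point $p\in T_0$ and contracting fixed points $q_1,q_2$ just outside the endpoints of $T_m$, with no other fixed points between them. Under iteration, the component of each $\pi(\mathcal{T}_i)$ containing $p$ expands toward $(q_1,q_2)\supset T_m$, while all other components (which lie between $p$ and the attractors) shrink and accumulate near $q_1,q_2$; for $n_0$ large enough, every $h^{n_0}(\pi(\mathcal{T}_i))$ is therefore one large interval close to $T_m$ plus a few tiny intervals near its endpoints, which is $\varepsilon$-equivalent to $T_m$. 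That single north--south map handles all $\omega$ trajectories simultaneously because they are nested; no separation into tracks is needed or possible. If you want to salvage your reduction to Lemma~\ref{l.connect}, you would at minimum have to build the $\varepsilon$-equivalent sequences $\mathcal{M}_j^{(i)}$ so that they remain nested in $i$ at every step $j$ and so that one connecting map works for all $i$ at once---at which point you have essentially rediscovered the paper's construction.
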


\begin{proof}
Again we avoid the formal proof, but just suggest rough 
idea of the proof. Let $h$ be a piecewise linear map which has 
\begin{itemize}
\item expanding fixed point $p$ in $T_0$, 
\item two contracting fixed point $q_1$, $q_2$ outside $T_m$,
but close to the endpoints of $T_m$, and
\item no fixed points between $p$ and $q_1$, $p$ and $q_2$. 
\end{itemize}
Then, for each $\mathcal{T}_i$, $h^n(\mathcal{T}_i)$ has the following feature:
there is one big interval, and rest of them are very small gathered close to $q_1$.
So, choosing $h$ appropriately, for some $n_0$
then we have
$h^{n_0}(\mathcal{T}_i)$ ($i =1, \ldots, \omega -2$)
 are all $\varepsilon$-equivalent to $T_m$. 
\end{proof}

\begin{rem}
Since $h$ is a homeomorphism, we have
$h(\mathcal{T}_i) \subset h(\mathcal{T}_{i+1})$ for every $i = 0,\ldots, \omega -2$.
\end{rem}

\subsection{Notations for defining maps}

In the rest of this article, we define many maps between one dimensional spaces.
For the effective description of them, we use 
following notations. 

Let $X, Y$ be one-dimensional connected manifolds
and $J \subset X$ be an interval.
Consider the set of maps $\mathcal{J} :=\{f : J \to \mathbb{R}  \}$.
We can introduce an additive group structure on $\mathcal{J}$
by pointwise operation. Then the set of constant maps 
$\mathcal{C}$ forms a subgroup in $\mathcal{J}$.  
We take the quotient group $\mathcal{J}^{\dagger} := \mathcal{J} / \mathcal{C}$.
Every map $f:J \to Y$  gives rise to an element 
$f^{\dagger} \in \mathcal{J}^{\dagger}$
(in the case $Y := S^1$ take the universal cover of 
$Y$ and take the lift of $f$). 
Note that, given an element in 
$g^{\dagger} \in \mathcal{J}^{\dagger}$ and the pair of points $(x, y) \in X \times Y$,
there is a unique map $G : J \to Y$,
which satisfies $G^{\dagger} = g^{\dagger}$ ($G^{\dagger}$ is the element in $\mathcal{J}^{\dagger}$
which $G$ naturally defines) and $G(x) = y$. 

Based on this fact, we use the following 
notation. Let $g: J \to Y$ be a map, $h^{\dagger} \in  \mathcal{J}^{\dagger}$
and $x \in J$. By {\it modify $g$ to $G$ over $J$ 
into $h$ keeping $G(x) = g(x)$}
we mean defining $G$ in a unique way such that 
$G^{\dagger} = h^{\dagger}$ and $g(x) =y$ on $J$
and $G = g$ outside $J$ .
Furthermore, if 
$h^{\dagger} \in \mathcal{J}^{\dagger}$ satisfies
$\mathrm{length}(g(J)) = \mathrm{length}(h^{\dagger}(J))$,
then the sentence \emph{modify $g$ to $G$ over $J$ into $h^{\dagger}$}
means to modify 
$g$ to $G$  in a unique way so that $(G|_{J})^{\dagger}= h^{\dagger}$ and
$G(J) = g(J)$ holds. We denote this modification just by $G[J] = h^{\dagger}$ when what $g$ means is clear from the context.

In the next section, we keep using the symbol $\dagger$ to distinguish 
if an element is in $\mathcal{J}$ or $\mathcal{J}^{\dagger}$, but 
from Section \ref{s.con} we omit the use of 
it since it will not bring much trouble.

\section{Construction of runway}\label{s.crun}

In this section, we give the proof of Proposition \ref{p.exrunway}.

\begin{proof}[Proof of Proposition \ref{p.exrunway}]
First, we fix $\omega \geq 3$. Then we pick up an $\omega$-template 
$\mathcal{T} = \coprod T_i \subset [0, \omega]$. We denote by $\pi$
the natural projection 
$\pi : \mathbb{R}_{\geq 0} \to I$ which identifies 
$x \sim y$ if $x-y \in \mathbb{Z}$.
Take $T_0,T_m \subset I$.
By applying Lemma \ref{l.connect}, we take a $1/\omega$-sequence $ \{M_i\}$ 
between $T_0$ and $T_m$ with length $d$. By Remark \ref{r.connect},
we can assume $T_0 \subset M_{i} \subset M_{i+1} \subset T_m$
for every $i= 0,\ldots, i_0-1$. Let $(\phi_{i})$ denote the maps which 
satisfies $M_{i+1} \subset \phi_i(M_i)$ and $\mu(\phi_i) < 1/\omega$.

Now we construct $F$ and $G$ as follows. First, 
we construct an auxiliary map $f_1$ as follows.
\[
f_1(x) :=
\begin{cases}
(1-1/\omega)x & (0 < x < \omega) \\
x -1 & (x \geq \omega)
\end{cases}
\]
We define $G(x) := x + d$, where $d := \omega + i_0$ 
and $I(i) := [i, i+1] \subset \mathbb{R}_{\geq 0}$. 
Note that $f_1(I(i)) = I(i-1)$ for $i \geq \omega$
and $G(I(i)) = I(i+d)$.

Then, we perturb $f_1$ to $F$ as follows:
\begin{itemize}
\item $F[I( \omega -1 + ( i_0 -i))] = (\phi_{i})^{\dagger}$ for $0 \leq i \leq i_0 -1$, 
\item Outside above $I(i)$, put $F := f_1$.
\end{itemize}

We can easily check that $\mu(F), \mu(G) \leq 1/\omega$.

Let us define the hiding regions $K$. 
We take $K \subset \mathbb{R}_{\geq 0}$ 
so that the following holds: 
\begin{itemize}
\item $\pi(I(i) \cap K) = \pi(\mathcal{T}_i)$ for $i \geq d$
\, (for the definition of $\mathcal{T}_i$, see Section \ref{s.temp}).

\item $\pi(I(\omega -1 + (i_0 -i)) \cap K) = M_{ i}$ for $i=0, \ldots, i_0 -1$.
\item $\pi( I(i) \cap K) = M_0 = T_0$ for $i \geq \omega +i_0$.
\end{itemize}

We show that $K$ is a hiding region for $(F, G)$.
We first check the hiding property under $F$. 
Since $F(\mathbb{R}_{\geq 0}) = \mathbb{R}_{\geq0}$, we need to 
check that every connected component of $K$ is covered by 
the image of $K$. To check it, 
we use the following convenient criterion: Let $\{ I_i\}$ be a family of 
intervals such that $\cup I_i \subset \mathbb{R}_{\geq 0}$. 
Then, if for each $I_i$ we can check $F(I_i) \cap K \subset F(I_i \cap K)$,
then we have $K \subset F(K)$. 

Let us start checking that condition.
First, we can check the it on $[0, \omega]$
by the definition of template. 
Let us consider the case $I( \omega -1 + ( i_0 -i)))$ where 
$ i= 0, \ldots, i_0 -1 $.
In this case, $F[I(\omega -1 + ( i_0 -i))] = (\phi_{i})^{\dagger}$,  
$\pi(K\cap I(\omega -1 + ( i_0 -i))) = M_i$, and
$\pi(K\cap F(I(\omega -1 + ( i_0 -i)))) = M_{i+1}$.
So we have the hiding property. 
The case $ i \geq i_0 + \omega $ is clear.

The check for $G$ can be done similarly. 
First, we have $G(\mathbb{R}_{\geq 0}) = [d, +\infty)$.
For each $I(i)$ where $i \geq d$, $\pi(K\cap I(i)) = T_0$.
Since $\pi(K\cap I(i))$ are  (i) $\mathcal{T}_k$ for some $k$ or 
(ii) $M_k$ for some $k$ or (iii) $T_0$, for each cases, we can check the 
hiding property.
\end{proof}

\section{Construction of connector}\label{s.con}
The aim of this section is to prove Proposition \ref{p.exconn}. 
In short, our strategy is to give a perturbation on each $I(i)$
paying attention to the combinatorial information of 
translations. 

\begin{proof}[Proof of Proposition \ref{p.exconn}]

First, we prepare some notations.
Let $r =p/q \in \mathbb{Q}_{\geq 1}$ 
which we assume irreducible.
Note that since  $p/q \geq 1$, we have $p \geq q$.
We put $I(i) :=[i/q, (i+1)/q]$, and introduce following grouping:
\begin{itemize}
\item $H\{k\}    := \{ i \in \mathbb{Z} \mid k(p+q) \leq i < (k+1)(p+q)  \}$.
\item $H_l\{k\}  := \{ i \in \mathbb{Z} \mid k(p+q)  \leq i < k(p+q) + p \}$.
\item $H_r\{k\} := \{ i \in \mathbb{Z} \mid k(p+q) + p \leq i < (k+1)(p+q) \}$.
\end{itemize}

We put $f_1 (x) := x -1$ and $g_1(x)  :=x +p/q$. 
Then we have $f_1(I(i)) = I(i - q)$ and $g_1(I(i)) = I(i +p)$.
Note that the following:
\begin{itemize}
\item If $i \in H_l\{k\}$ then
$i-q \in H_l\{k\} \cup H_r\{k -1\} $.
\item If $i \in H_l\{k\}$ then $i +p \in H_r\{k\} \cup H_l\{k+1\}$.
\item If $i \in H_r\{k\}$ then 
$i-q \in H_l\{k\} $.
\item If $i \in H_r\{k\}$ then 
$i +p \in  H_r\{k \} \cup H_l\{k+1\}$.
\end{itemize}

Let $K_1$, $K_2 \subset S^1 \setminus \{0 \}$ be regions.
Fix $\omega >0$.
Take a natural map $\pi : S^1 \setminus \{0 \} \to [0, 1]$ and
consider $\pi(K_1)$, $\pi(K_2)$. Then by applying Lemma \ref{l.connect},
we can take the $\varepsilon$-equivalent sequence between 
$\pi(K_1)$ and $\pi(K_2)$. Take the natural lift of them to $S^1$.
We denote the sequence of maps that gives the equivalence
by $\{U_i\}$ and $\{V_i\}$ ($i = 0, \ldots, i_0 -1$).

Then, we renormalize them toto the circle 
$S^1 := \mathbb{R}/(1/q)\mathbb{Z}$ (see Remark \ref{r.renom}).
We denote the renormalized maps and regions by the same symbols. 
Let us start the perturbation.
We perturb $f_1$ intoto $F$ as follows:
\begin{itemize}
\item  On $I(i)$ where $i < 0$ or $i \geq i_0(p+q)$,  $F=f_1$. 
\item  On $I(i)$ where $i \in H_l\{k\}$ ($k = 0, \ldots, i_0-1$), 
$F[I(i)] = \mathrm{id}$.
\item  On $I(i)$ where $i \in H_r\{k\}$ ($k = 0, \ldots, i_0-1$),  
$F[I(i)] = V_k$.
\end{itemize}
The perturbation of $g_1$ into $G$ is given as follows
\begin{itemize}
\item  On $I(i)$ where $i < 0$ or $i \geq i_0(p+q)$,  $G=g_1$. 
\item  On $I(i)$ where $i \in H_l\{k\}$ ($k = 0, \ldots, i_0-1$), $G[I(i)] =  U_k$.
\item  On $I(i)$ where $i \in H_r\{k\}$ ($k = 0, \ldots, i_0-1$), 
$G[I(i)] = \mathrm{id}$.
\end{itemize}
By construction, we can easily see that $\mu(F), \mu(G) < 1/\omega$.

By $\pi$ we denote the natural projection 
$\pi : \mathbb{R} \to S^1 = \mathbb{R}/(1/q)\mathbb{Z}$.
This notation conflicts to that in Section \ref{s.crun},
but it would not bring much confusion.
Now we define the region $K \subset \mathbb{R}$ as follows: 
\begin{itemize}
\item On $I(i)$ where $i <  p$,  $\pi (K \cap I(i)) =M_0 = K_0$.
\item On $I(i)$ where $i \in H_r\{k\} \cup H_l\{k+1\}$ \,  $(k =0,\ldots, i_0-1)$,
we put $\pi(K \cap I(i)) = M_k$.
\item On $I(i)$ where $i \geq (i_0-1)(p+q) + p$, 
$\pi(K \cap I(i)) =  M_{i_0} =K_1$.
\end{itemize}

Let us see that this $K$ is a hiding region for $(F, G)$. 
In the following, we check it for $F$, that is, 
we check $(F(I(i)) \cap M) \subset  F(I(i) \cap M)$ holds for all $i$.
There are four cases to check:
\begin{itemize}
\item For $i < p$. In this case, 
we have $\pi (F(I(i)) \cap K) = M_0$. 
since $i -q< p$,
$\pi (F(I(i)) \cap K) = M_0$. 
Furthermore, we have $F[I(i)] = \mathrm{id}$. Thus we have the hiding property.

\item For $i  \in H_l\{ k\}$ ($k =  1, \ldots, i_0 -1$). 
In this case, 
we have $\pi (F(I(i)) \cap K) = M_k$. 
Since $i -q \in H_l\{k\} \cup H_r\{k -1\}$, we have
$\pi (F(I(i)) \cap K) = M_{k}$. 
Furthermore, we have $F[I(i)] = \mathrm{id}$. Thus have the hiding property.

\item For $i \in H_r\{ k\}$ ($k =  0, \ldots, i_0 -1$).
In this case, 
we have $\pi (F(I(i)) \cap K) = M_{k+1}$. 
Since $i -q \in H_l\{k\}$, we have
$\pi (F(I(i)) \cap K) = M_{k}$. 
Furthermore, we have $F[I(i)] = V_k$. Thus we have the hiding property.

\item For $i \geq i_0 (p+q)$. 
In this case, 
we have $\pi (F(I(i)) \cap K) = M_{i_0}$. 
Since $i -q \geq (i_0-1)(p+q) + p$, 
we have $\pi (F(I(i)) \cap K) = M_{i_0}$. 
Furthermore, we have $F[I(i)] = \mathrm{id}$. Thus we have the hiding property.
\end{itemize}
The check for $G$ can be done similarly, so we omit the proof of it. 
\end{proof}

\begin{rem}\label{r.renom}
Let us clarify the meaning of the renormalization. 
\begin{enumerate}
\item Let $X_1$, $X_2$ be intervals. 
Let $f: X_1 \to X_1$ be a map and $Y$ be a region in $X$.
Then, the \emph{renormalization of $f$ to $X_2$} is the map
$\tilde{f} :X_2 \to X_2$ given 
as follows: Take the (unique) affine orientation preserving homeomorphism 
$\phi :X_1 \to X_2$. Then $\tilde{f} := \phi \circ f \circ \phi^{-1}$.
The renormalization of $Y$ to $X_2$ is $\phi (Y)$.

\item Let $X_1 := \mathbb{R}/ a_1\mathbb{Z}$, 
$X_2 := \mathbb{R}/ a_2\mathbb{Z}$ (where $a_1, a_2$ denote some 
positive real numbers) be circles. 
Let $f: X_1 \to X_1$ be a map and $Y$ be a region in $X$.
Then, the \emph{renormalization of $f$ to $X_2$} is the map
$\tilde{f} :X_2 \to X_2$ given 
as follows: Take the affine orientation preserving homeomorphism 
$\phi : X_1 \to X_2$ defined by $\phi (x) := a_2/a_1x$.
Then $\tilde{f} := \phi \circ f \circ \phi^{-1}$.
The renormalization of $Y$ to $X_2$ is $\phi (Y)$.
\end{enumerate}

\end{rem}

\section{Construction of quantum leap (I)}\label{s.qua}

In this section, we construct quantum leaps assuming ``local constructions."
Note that {\it in this section and the next section, 
we consider the case $q_1 \leq q_2$}.
The proof of the case $q_1 > q_2$ can be done similarly, but 
requires small modifications. So we discuss it later (see Section \ref{s.anoth}).

\subsection{How should $N$ be?}\label{howlambda}
Proposition \ref{p.exquant}
involves a constant $N$ which only 
depends on the choice of $\omega$,
independent of $p_i/q_i$. 
As we have seen in Section \ref{ss.conc},
this independence is the heart of the proof of the Theorem.
In this subsection, we explain for given $\omega$ how should 
$N$ be fixed. 

First, we prepare a lemma.
\begin{lem}\label{l.decsmall}
Given $\varepsilon >0$, there exists $\varepsilon_1 >0$ 
such that the following holds: If $f, g$ satisfies 
$\mu (f), \mu (g)  < \varepsilon_1$, 
then  $\mu ( f \circ g) < \varepsilon$.
\end{lem}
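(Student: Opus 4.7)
The plan is to use the chain rule. At any point $x \in I$ where both $g$ and $f \circ g$ are differentiable (that is, $x \notin c(g) \cup g^{-1}(c(f))$, which is still a finite set, so $f \circ g$ is a PL-map and $\mu(f \circ g)$ is well defined), one has $(f\circ g)'(x) = f'(g(x)) \cdot g'(x)$. Write $f'(g(x)) = 1 + \alpha$ and $g'(x) = 1 + \beta$, where $|\alpha|, |\beta| < \varepsilon_1$ by hypothesis.

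A direct expansion gives
\[
(f \circ g)'(x) - 1 = (1+\alpha)(1+\beta) - 1 = \alpha + \beta + \alpha\beta,
\]
so $|(f\circ g)'(x) -1| \leq |\alpha| + |\beta| + |\alpha||\beta| \leq 2\varepsilon_1 + \varepsilon_1^{2}$. Taking the supremum over $x$ outside the finite exceptional set yields $\mu(f \circ g) \leq 2\varepsilon_1 + \varepsilon_1^{2}$.

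It then suffices to choose $\varepsilon_1 > 0$ so that $2\varepsilon_1 + \varepsilon_1^{2} < \varepsilon$; for instance $\varepsilon_1 := \min\{1, \varepsilon/3\}$ forces $2\varepsilon_1 + \varepsilon_1^{2} \leq 3\varepsilon_1 \leq \varepsilon$. There is no real obstacle here; the only point requiring a moment's thought is verifying that $f \circ g$ still qualifies as a PL-map in the sense of the paper, which is immediate because the union of a finite set with a preimage of a finite set under the homeomorphism $g$ remains finite.
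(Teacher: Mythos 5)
Your proof is correct, and it is the standard argument that the paper clearly has in mind (the paper explicitly writes ``We omit the proof of Lemma~\ref{l.decsmall},'' so there is no written proof to compare against). The chain rule on the finite complement of $c(g)\cup g^{-1}(c(f))$, the expansion $(1+\alpha)(1+\beta)-1=\alpha+\beta+\alpha\beta$, and the resulting bound $\mu(f\circ g)\le \mu(f)+\mu(g)+\mu(f)\mu(g)<2\varepsilon_1+\varepsilon_1^2$ together with the choice $\varepsilon_1=\min\{1,\varepsilon/3\}$ give exactly what is needed; your remark that $f\circ g$ remains PL is also the right point to verify.
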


We omit the proof of Lemma \ref{l.decsmall}. 
By applying this lemma, we fix $\Omega >0$
such that if $f, g$ satisfy
$\mu(f ), \mu(g) < \Omega$  
then  $\mu (f \circ g) < 1/\omega$ holds.

First, we take $\omega$-template. 
Then take the geometry eater 
$h$ with $\mu(h) < \Omega$ with length $\xi$
(see Section \ref{s.geoe}). This $\xi$ is the second constant.
Now we take $\lambda >0$ such that the following holds:
\[
\lambda < 1/  (\omega + \xi +1).
\]
Then finally take $N$ by applying Proposition \ref{p.goodrat} for 
this $\lambda$.

The important point of this estimate is that under 
this choice of $N$, every consecutive 
$r = h/k > h'/k' =  r'$ in $\mathfrak{F}_{N}$ satisfies
the following; 
\[
\tau := \frac{1}{ \min \{k, k'\}} \cdot \frac{1}{ (r - r')} \geq  \omega + \xi +2,
\]
and the left hand side is an integer.
This estimate is the heart of the constructions of Section \ref{s.qua2}.

\subsection{Orbit circle perturbation: Definition}

For the construction of the connector and the quantum leap, 
it is convenient to reduce our problem to a circle. 
The {\it orbit circle} is the ideal space 
which enables us to describe the perturbation in simpler fashion.

\begin{defin}
Let $\alpha \in S^1$.
A pair $(u, M)$, where $u :S^1 \to S^1$ is an 
orientation preserving PL-homeomorphism 
and $M$ is a region in $S^1$, is called an 
\emph{$\alpha$-hiding pair} if the following holds:
\begin{itemize}
\item $u(0) =-\alpha$
(on $S^1$ we endow the natural abelian group structure).
\item $M \subset u(M)$.
\end{itemize} 
Furthermore, we say that
$(u, M)$ satisfies \emph{pasting property}
is the following holds:
\begin{itemize}
\item $u(\alpha) =0$.
\item $0, \alpha \not\in M $.
\end{itemize} 
\end{defin}

We extend the concept of ``modifying $\alpha$-pair gradually"
to two hiding pairs.
\begin{defin}
Let $(u_0, M_0)$ and $(u_1, M_1)$ be $\alpha$-hiding pairs. 
We say that they are \emph{$\varepsilon$-equivalent} 
if there exists orientation preserving homeomorphism $U$, $V$ such that 
the following holds:
\begin{itemize}
\item $\mu(U), \mu (V) < \varepsilon$. 
\item $U(0) = -\alpha$ and $M_1 \subset U(M_0)$.
\item $V(0) = 0$ and $M_0 \subset V(M_1)$.
\end{itemize}
We say that $U$ satisfies \emph{pasting property} if 
$U(\alpha) =0$.

We define the $\varepsilon$-equivalent sequence of $\alpha$-hiding pairs, 
its length in a similar fashion as is in Definition \ref{d.eseq}.
\end{defin}

\begin{rem}
To construct a quantum leap, 
one important step is construct a sequence 
of $\varepsilon$-equivalent $\alpha$-hiding pairs which connects two 
specific pairs. Section 9 is devoted to solve this problem.
One may expect a decomposition lemma similar to Lemma \ref{l.connect}
holds for given two $\alpha$-hiding pairs. 
However, proving such a lemma 
in a general setting seems to require deep arguments. 
At the moment of writing, the author does not know if it is possible or not.
Thus in this article we do not pursue this problem.
\end{rem}

\begin{rem}\label{r.pasting}
Pasting properties is used as follows: 
if $(u, M)$ and $U$ above satisfies the pasting properties, 
then it implies 
that $u|_{[ 0, \alpha]}$, $U|_{[0, \alpha]}$ are both homeomorphism 
 from $[0,\alpha]$ to $[-\alpha, 0]$. 
Thus the following map $(U)^{\ast}$ is a well-defined homeomorphism on $S^1$:
\[
(U)^{\ast}(x) =
\begin{cases}
u(x)          & x \in [0, \alpha] \subset S^1, \\
U(x)          &  \mbox{\normalfont otherwise.}
\end{cases}
\]
\end{rem}

\subsection{Initial pair: Axiomatic description}

In this section, we give a definition of some maps which is used 
for the proof of Proposition \ref{p.exquant}.
In the following, we only give the properties 
of these maps and postpone the proof of the existence of such maps. 
The existence of them will be provided in Section \ref{s.qua2}. 

\begin{prop}\label{p.initiator}
Let $S^1 = \mathbb{R} /(1/q_1)\mathbb{Z}$. 
There exists an $R$-hiding pair $(u_I, M_I)$ with pasting property 
satisfying the following:
\begin{itemize}
\item $\mu(u_{I}) <1/\omega$.
\item $u_{I}(M_I) \subset M_I$.
\item $u_I|_{[0, \omega R]}(x) = x -R$.
\item Define $\nu : [0, 1/q_1) \to S_1$ as follows:
$\nu|_{[0, \omega R]} (x) = (1-1/\omega)x$ and 
$\nu|_{[R, 1/q_1)} = u_I$ (note that this is a 
homeomorphism on its image).
Then we have $\big ( M_I \cap [0, 1/q_1 -R] \big)\subset 
\nu (M_I) \cap [0, 1/q_1 -R] $.
\end{itemize}
we call such $(u_I, M_I)$ an \emph{$\omega$-initial pair}.
\end{prop}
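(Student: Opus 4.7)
The strategy is to construct $(u_I, M_I)$ as a ``runway-in-miniature'' on the circle $S^1 = \mathbb{R}/(1/q_1)\mathbb{Z}$, obtained by rescaling an $\omega$-template. First I would fix an $\omega$-template $\mathcal{T} = \coprod_{i=0}^m T_i \subset [0, \omega]$ produced by Section \ref{s.temp} and rescale it to $R\mathcal{T} \subset [0, \omega R]$. The bound $\tau \geq \omega + \xi + 2$ established in Section \ref{howlambda} gives $(\omega + \xi + 2) R \leq 1/q_1$, so the rescaled template fits strictly inside $S^1$ with substantial slack. Crucially, the contraction $f(x) = (1 - 1/\omega)x$ is scale-invariant, and the shift $g(x) = x - 1$ becomes $x - R$ after rescaling, so both simultaneous template hiding properties survive the rescaling.

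Next I would define $u_I$ on $S^1$ piecewise. On the translation region $[0, \omega R]$ I would set $u_I(x) := x - R$; this yields the third bullet and simultaneously the identities $u_I(0) = -R$ (the $R$-hiding condition at $0$) and $u_I(R) = 0$ (the pasting condition). On the complementary arc $[\omega R, 1/q_1]$ I would construct a PL-homeomorphism sending $\omega R \mapsto (\omega - 1) R$ and $1/q_1 \mapsto 1/q_1 - R$; both intervals have identical length $1/q_1 - \omega R$, so $u_I$ can be realized here with all slopes in $(1 - 1/\omega, 1 + 1/\omega)$, yielding $\mu(u_I) < 1/\omega$. I would exploit the freedom in this construction to arrange that the backward $u_I$-orbit of the rightmost template piece $R T_m$ closes up after exactly $q_2 = \max\{k, k'\}$ iterates, which is consistent with the rotation number $-R q_1 = -1/q_2$ of $u_I$.

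The region $M_I$ is then $R\mathcal{T}$ on $[0, \omega R]$ together with the $q_2 - 1$ further backward iterates of $R T_m$ placed in $[\omega R, 1/q_1]$. The rescaled $g$-hiding property of the template gives $R T_i \subset u_I(R T_{i'})$ for every $i < m$, while $R T_m$ is covered by its first backward iterate (which lies in $[\omega R, 1/q_1]$), and each subsequent backward iterate covers the previous one; after $q_2$ steps the orbit closes up. This gives both $M_I \subset u_I(M_I)$ (the hiding condition) and $u_I(M_I) \subset M_I$ (the second bullet), so $u_I(M_I) = M_I$. The pasting condition $0, R \notin M_I$ holds because the template is built with $T_0 \subset (0, 1)$ and $T_m \subset (\omega - 1, \omega)$ having positive distance from the endpoints, so after rescaling $R T_0$ and $R T_m$ avoid $0$ and $R$.

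For the fourth bullet, $\nu$ acts as the contraction $(1 - 1/\omega) x$ on $[0, \omega R]$ and as $u_I$ on the complementary arc; the two pieces glue continuously at $\omega R$ since $(1 - 1/\omega)\omega R = (\omega - 1) R = u_I(\omega R)$, so $\nu$ is a PL-homeomorphism onto $[0, 1/q_1 - R)$. The scale-invariant $f$-hiding property of the template directly covers $M_I \cap [0, (\omega - 1) R]$ via the contraction piece, and the $u_I$-invariance of $M_I$ already established handles $M_I \cap [(\omega - 1) R, 1/q_1 - R]$ via the translation piece. The main obstacle throughout is arranging the PL map $u_I$ on the arc $[\omega R, 1/q_1]$ so that the backward orbit of $R T_m$ produces a closed region with only finitely many connected components and the orbit closes up in exactly $q_2$ steps while keeping all slopes in the prescribed window; this is precisely where the integrality of $\tau = 1/(q_1(r - r'))$ from Section \ref{howlambda} is essential, as it makes the relevant rotation number rational with denominator $q_2$ and thus the period-closing feasible.
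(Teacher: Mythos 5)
Your high-level blueprint is the same as the paper's: rescale an $\omega$-template onto $[0,\omega R]$, keep $u_I$ a pure translation there (so the template's $g$-hiding handles that region), and fill the complementary arc $[\omega R, 1/q_1]$ by the circular orbit of the single interval $T_0$ (equivalently, the backward orbit of $T_m$) so that everything closes up. But the entire difficulty of the proposition is concentrated in your phrase ``arrange that the backward $u_I$-orbit of $R T_m$ closes up after exactly $q_2$ iterates'' and in the claim that the PL map on the arc can simultaneously have all slopes within $1/\omega$ of $1$ and realize this closure. These two facts are not automatic and you do not supply a construction. The issue is that $T_0$ and $T_m$ are intervals of different lengths (the template construction gives $|T_0|<|T_m|$), so the composition of the $\tau-\omega$ intermediate branches of $u_I$ must carry a $T_m$-shaped interval exactly onto a $T_0$-shaped one while every branch is $\mu$-close to the identity. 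The paper's solution is precisely the geometry eater $h$ of Section~\ref{s.geoe}: $u_I$ is set equal to (a renormalized copy of) $h$ on $J(0),\dots,J(\xi-1)$ and to the correcting map $\alpha_0$ (from the $\Omega$-equivalence $h^{\xi}(T_0)\approx T_m$) on $J(\xi)$, and the remaining $I(i)$ carry a pure translation. This is the reason the length $\xi$ of the geometry eater appears in the bound $\tau\geq\omega+\xi+2$: $\xi$ slots are reserved for $h$ and one more for $\alpha_0$. You quote that bound but use it only as ``substantial slack,'' and the geometry eater never appears in your argument, so the shape-bridging step is left unsupported.

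A secondary point: ``closes up after $q_2$ iterates, consistent with the rotation number $-1/q_2$'' is not the condition you actually need. Rational rotation number only forces periodicity for the genuine rotation; for a PL perturbation the interval $R T_m$ need not be periodic, and in any case the hiding property demands the sharper matching $u_I^{\,\tau-\omega+1}(R T_0)=R T_m$ (so that the circular orbit lands inside $R\mathcal{T}_0,\ldots,R\mathcal{T}_{\omega-1}$ and agrees with the template when it re-enters $[0,\omega R]$), together with $0,R\notin M_I$. Both the periodicity and the exact landing on $R T_0$ are arranged in the paper by the explicit choice of $h$, $\alpha_0$, and the middle $T_m$-periodic band $M_s$; without that machinery the argument has a genuine gap.
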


This dynamics will be used to define IFSs 
which has the compatibility for both translation behavior and 
contracting behavior. 

\begin{defin}
An $\alpha$-hiding pair $(u, M)$ is called 
an \emph{$\alpha$-translation pair} if $u$ 
is a translation $u(x) = x -\alpha $.
\end{defin}

The construction of the quantum leap is crystallized in the 
following proposition.

\begin{prop}\label{p.elysian}
Let $(u_I, M)$ be an $\omega$-initial pair which is obtained by 
as a consequence of Proposition \ref{p.initiator}.
Then there exists a sequence of $1/\omega$-equivalent $R$-hiding 
pairs $\{ (u_i, M_i) \}_{i = 0,\ldots, i_0}$ satisfying 
$\mu (u_i) < 1/\omega$ for each $i$
which connects the $\omega$-initial pair $(u_I, M_I)$ 
and an $R$-translation pair. 
furthermore, we can take the connecting maps $(U_i)$
with pasting property.
\end{prop}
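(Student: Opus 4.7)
The plan is to construct the sequence $\{(u_i,M_i)\}_{i=0}^{i_0}$ by iteratively simplifying the region $M_i$ and the map $u_i$ until $u_{i_0}$ is the pure translation $x \mapsto x-R$. The driving mechanism is the geometry eater $h$ from Section \ref{s.geoe}, which is designed precisely to collapse a template-based region into a single interval under iteration. By Proposition \ref{p.initiator}, the initial pair $(u_I,M_I)$ already has $u_I$ equal to the pure translation on $[0,\omega R]$, so both the non-translation behavior of $u_I$ and the combinatorial complexity of $M_I$ are confined to $[\omega R, 1/q_1)$; this is the locus to be simplified.

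At each step $i \to i+1$ I would set $M_{i+1}$ to be the image $h(M_i)$ (or a small adjustment thereof ensuring $M_{i+1} \subset u_{i+1}(M_{i+1})$ still holds), and take $u_{i+1}$ to be a PL modification of $u_i$ that agrees with the target pure translation on a progressively larger subset of $S^1$. The connecting maps $U_i$ would then be essentially $h$, normalized so that the pasting property $U_i(0) = -R$ and $U_i(R) = 0$ holds. The choice of the constants $\Omega$ and $\xi$ in Section \ref{howlambda} is exactly what makes this work: since $\mu(h) < \Omega$ and at most $\xi$ iterations are required, Lemma \ref{l.decsmall} guarantees $\mu(U_i) < 1/\omega$ throughout, even after composing $h$ with any PL adjustment used to enforce pasting or to update $u_i$.

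After all template structure has been absorbed, the remaining region $M_{i_0-1}$ is simple enough that a final short tail of the equivalence sequence can replace $u_{i_0 - 1}$ by the exact translation $x \mapsto x - R$, with a region $M_{i_0}$ which is manifestly invariant under it, yielding the desired $R$-translation pair. Throughout, the fact that $M_I \subset [0, 1/q_1 - R]$ satisfies the contracting-hiding clause of Proposition \ref{p.initiator} is what allows the gradual transfer of complexity from the nonlinear part of $u_I$ into a translation-compatible form.

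The main obstacle will be enforcing the pasting property uniformly along the sequence. Both the geometry eater and the PL modifications to $u_i$ must effectively fix the points $0$ and $R$ on $S^1$ in order that $U_i(0) = -R$ and $U_i(R) = 0$, while simultaneously carrying $M_i$ onto a superset of $M_{i+1}$. This forces a careful placement of the fixed points of $h$ (the expanding one inside $T_0$ and the two contracting ones near the endpoints of $T_m$) so that they lie away from the two distinguished points $0, R$, together with a compatible choice of the template origin within $S^1 = \mathbb{R}/(1/q_1)\mathbb{Z}$. Once this geometric bookkeeping is arranged, the uniform $\mu$-bound and the hiding condition at each step follow from the design in Section \ref{howlambda} and the construction in Section \ref{s.geoe}.
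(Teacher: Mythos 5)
Your high-level plan—iteratively apply the geometry eater while progressively replacing $u_i$ by the translation, using the $\Omega$-budget of Section \ref{howlambda} and Lemma \ref{l.decsmall} to control compositions—is indeed the paper's strategy (Lemma \ref{l.georeal}). However, the critical step is under-specified in a way that conceals where the actual work is.

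First, ``set $M_{i+1}$ to be $h(M_i)$'' is not well-defined: the geometry eater $h$ acts on a single fundamental domain $[0,R)$ after renormalization, not on the orbit circle $\mathbb{R}/(1/q_1)\mathbb{Z}$. The paper defines $M_j$ \emph{interval-by-interval} via $\pi$: $\pi(M_j\cap I(i))=h^{j}(\mathcal{T}_i)$ on the template intervals, $\pi(M_j\cap J(i))=h^{j+i}(\mathcal{T}_0)$ on the $J$-intervals (note the power depends on $i$, not just $j$), and $\mathcal{T}_m$ on the remaining bulk. This shift in powers is forced by the hiding property $M_j\subset u_j(M_j)$ together with the fact that $u_j$ itself applies $h$ on each $J(i)$. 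As a consequence, the connecting map $U_j$—which must satisfy $U_j(0)=-R$ and $M_{j+1}\subset U_j(M_j)$, hence compares $M_j\cap J(i)=h^{j+i}(\mathcal{T}_0)$ with $M_{j+1}\cap J(i+1)=h^{j+i+2}(\mathcal{T}_0)$—is forced to act as $h^2$ on the $J$-intervals, and as $\alpha_0$ or $\alpha_0\circ h$ at the boundary where a $J$-block is absorbed into the $\mathcal{T}_m$-bulk. This, not some incidental ``PL adjustment,'' is the reason the budget is set up so that \emph{compositions} of $\Omega$-small maps stay $1/\omega$-small. Your proposal treats $U_i$ as ``essentially $h$,'' which would fail the covering condition $M_{i+1}\subset U_i(M_i)$ on the $J$-side.

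Second, the statement that the combinatorial complexity of $M_I$ is confined to $[\omega R, 1/q_1)$ is wrong: by Proposition \ref{p.initiator}, $M_I$ carries the full $\omega$-template on $[0,\omega R]$, which is precisely the most complex part and must also be ``eaten'' (that is the role of $\pi(M_j\cap I(i))=h^j(\mathcal{T}_i)$). Both the template block and the $J$-tail are simplified simultaneously, and the hiding and equivalence conditions must be verified across all four regimes (template intervals, $J$-intervals, the absorption boundary, the $\mathcal{T}_m$-bulk), which is what the paper's diagram-chases do. Without this interval-wise bookkeeping your sequence is not actually exhibited, and the key claim that each $(u_i,M_i)$ is an $R$-hiding pair and each consecutive pair is $1/\omega$-equivalent remains unverified.

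Minor point: since $h$ is constructed as an orientation-preserving homeomorphism of $I$, it already fixes the endpoints after renormalization, so your worry about placing the fixed points of $h$ to preserve pasting is largely automatic.
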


\subsection{Realization of orbit circle maps}\label{c.qua1}
Now assuming Proposition \ref{p.initiator} and \ref{p.elysian},
we prove the existence of the quantum leap.

We prepare some notations. 
Let $ r = r_1= p_1/q_1$, $r' = r_2 =p_2/q_2$ satisfying 
$0 < R :=r_1 - r_2 < \lambda/q_1$ be given
(remember that  we consider the case $q_1 \leq q_2$).

Then, we put as follows:
\begin{itemize}
\item $H\{i\} := \{ k \in \mathbb{Z} \mid (p_1 +q_1)i \leq k < (p_1 +q_1)(i+1) \}$.
\item $H^l\{i\} 
:= \{ k \in \mathbb{Z} \mid (p_1 +q_1)i \leq k < (p_1 +q_1)i +p_1 \}$.
\item $H^r\{i\} := \{ k \in \mathbb{Z} \mid (p_1 +q_1)i +p_1 \leq k < (p_1 +q_1)(i+1)\}$.
\end{itemize}

For each $i$, put $I(k, \ast) :=  [k(p_1+q_1)/q_1, k(p_1+q_1)/q_1 +R ] \subset 
I( k(p_1+q_1) )$ and call them \emph{sliding interval}.

Now, let us start the proof of Proposition \ref{p.exquant}.

\begin{proof}[Proof of Proposition \ref{p.exquant}]
First, we define $g_1$ as follows:
\[
g_1(x) =
\begin{cases}
x + r_1          & (x \leq 0) \\
(1-1/\omega)x +r_1  & (0 \leq x \leq  \omega R)   \\
x + r_2        & (x \geq \omega R)
\end{cases}.
\]
Note that $x+r_2 = x+ r_1 -R$.
We put $f_1(x) :=x -1$.
These are orientation preserving piecewise-linear homeomorphisms
on $\mathbb{R}$. 

Now by applying Proposition \ref{p.initiator},
we fix an $1/\omega$-initial pair $(u_I, M_I)$.
Furthermore, we apply 
Proposition \ref{p.elysian} to obtain a sequence
of $R$-hiding pair $(u_i, M_i)$ of length $i_0$ such that 
$(u_0, M_0) = (u_I, M_I)$ and  $(u_{i_0}, M_{i_0})$ is $R$-translation pair.
We denote the sequence of maps which gives the 
$1/\omega$-equivalence between $(u_i, M_i)$ and
$(u_{i+1}, M_{i+1})$ by $(U_i)$ and $(V_i)$  ($i = 0,\ldots, i_0 -1$).
Then, we define $U_i^{\ast}$ as follows (see Remark \ref{r.pasting}):
$(U_i)^{\ast}(x) = u_i(x)$  if $x \in [0, R] \subset S^1$ and
$(U_i)^{\ast}(x) = U_i(x)$ otherwise.

Then let us describe the modifications. 
In the following, $\pi$ denotes the natural projection from $\mathbb{R}$ to 
$S^1 := \mathbb{R} / (1/q_1)\mathbb{Z}$.

From $f_1$ to $F$:
\begin{itemize}
\item  On $I(i)$ where $i < 0$ or $i \geq (i_0 +1)(p_1+q_1)$,  $F=f_1$. 
\item  On $I(i)$ where $i \in H^l\{k+1\}$ ($k = 0, \ldots, i_0-1$), 
$F[I(i)] = \mathrm{id}$.
\item  On $I(i)$ where $i \in H^r\{k+1\}$ ($k = 0, \ldots, i_0-1$),  
$F[I(i)] = V_k$.
\end{itemize}

From $g_1$ to $G$:
\begin{itemize}
\item On $I(i)$ where $i < 0$ or $i \geq (i_0+1)(p_1+q_1)$, 
$G = g_1$.
\item On $I(0)$, $G[I(0)] = \nu$ (see the definition of initial pair).
\item On $I(i)$ where $i \in H\{0\} \setminus \{0 \}$,
$G[I(i)] = u_0$.
\item  On $I(i)$ where $i \in H^l\{k+1 \} \setminus \{ (k+1) (p_1 +q_1) \}$ 
$(0 \leq k < i_0 )$, $G[I(i)] = U_k$.
\item On $I((k+1)(p_1+q_1) )$ \, $(0 \leq k < i_0 )$, 
$G[I((k+1)(p_1+q_1))] = (U_k)^{\ast}$.
\item  On $I(i)$ where $i \in H^r\{k+1 \}$ ($0 \leq k < i_0 $), 
$G[I(i)] = u_{k+1}$.
\end{itemize}
By construction, we can check that $\mu(F), \mu(G) \leq 1/\omega$ easily.
We take the region $M$ as follows:
\begin{itemize}
\item On $I(i)$ where $i < p_1+q_1$, we define 
$M$ so that $\pi (M \cap I(i)) = M_0$ holds.
\item On $I(i)$ where $i \geq (i_0 +1)(p_1+q_1)$, we define $M$ so that 
$\pi(M \cap I(i)) = M_{i_0}$ holds.
\item On $I(i)$ where $i \in H^l\{ k+1\}$ ($ 0 \leq  k < i_0$), we define $M$ so that 
$\pi (M \cap I(i)) = M_k$ holds.
\item On $I(i)$ where $i \in H^r\{ k+1\}$ ($ 0 \leq  k < i_0$), we define $M$ so that 
$\pi (M \cap I(i)) = M_{k+1}$ holds.
\end{itemize}

Let us check the hiding property of $(F, G)$ for $M$. 
We need to check $M \subset F(M)$ and $M \subset G(M)$.
Checking the condition for $F$ is almost same as the proof of 
Proposition \ref{p.exconn}. So we omit that case and 
concentrate on the hiding property of $G$. 

As was in the proof of 
Proposition \ref{p.exconn}, we only need to check the hiding property 
on each interval $I(i)$. Let us check it. 

\begin{itemize}
\item If $i <0$, $G(I(i)) = I(i + p_1)$, $G[I(i)] = \mathrm{id}$ and 
$\pi (I(i) \cap M) =  \pi (I(i+p_1) \cap M) = M_0$. So we have the 
hiding property.
\item On $I(0)$. 
\begin{itemize}
\item On $[0, \omega R]$, 
$G([0, \omega R]) = [p_1/q_1, p_1/q_1 + (\omega-1)R ]$.
We also have $G\big[ [0, \omega R] \big] = (1-1/\omega)x$ and
$\pi ([0, \omega R]  \cap M) =  \pi ( G([0, \omega] \cap M) )
= M_I$. Thus by the definition of the initial pair, we have the hiding property.
\item On $I_0 :=I(0) \setminus [0, \omega R]$.
We know $G(I_0) =  [p_1/q_1 + \omega R,  (p_1 +1)/q_1 ] -R \subset I(p_1)$,
$G\big[ I_0 \big] = u_0|_{I_0}$ and 
$\pi ( I_0 \cap M) =  \pi ( G(I_0) \cap M)  =M_0$.
Thus we have the hiding property.
\end{itemize}
\item On $I(i)$ where $i \in H \{0\} \setminus \{0\}$. 
$G(I(i)) = I(i+p_1) - R$, $G[ I(i) ] = u_0$ and 
$\pi ( I(i) \cap M) = \pi ( G(I(i))  \cap M) = M_0$.
Thus we have the hiding property.
\item On $I((k+1)(p_1+q_1))$ where $0 \leq k \leq i_0 $.
Note that this region contains the sliding interval $I(k+1, \ast)$.
\begin{itemize}
\item On $I(k+1, \ast)$. We know $G(I(k+1, \ast)) \subset I(j)$
where $j \in H^l\{ k+1\}$.
$G[I(k+1, \ast)] = u_k|_{[0, R]}$,
$\pi (I(k+1, \ast) \cap M) = M_k \cap [0, R]$ and
$\pi (G(I(k+1, \ast)) \cap M) = M_k \cap [-R, 0]$.
Thus we have the hiding property.
\item On $I_{k+1} := I((k+1)(p_1+q_1)) \setminus I(k+1, \ast)$.
We have $G(I_{k+1}) \subset I(j)$ where $j \in H^r\{ k+1\}$,
$G[I_{k+1}] = U_k|_{[R, 1/q_1]}$,
$\pi (I_{k+1} \cap M) = M_k \cap [R, 1/q_1]$ and
$\pi (G(I_{k+1}) \cap M) = M_{k+1} \cap [0, 1/q_1-R]$.
Thus we have the hiding property.
\end{itemize}
\item On $I(i)$ for $i \in H^l\{k+1\} \setminus \{(k+1)(p_1+q_1) \}$ where 
$0 \leq k < i_0 $.
In this case, $G(I(i)) = I(i+p_1)-R$, $G[I(i)] = U_k$, 
$\pi (I(i) \cap M) = M_k $ and
$\pi (G(I(i)) \cap M) = M_{k+1}$. Thus we have the hiding property.

\item On $I(i)$ for $i \in H^r\{k+1\} $ where $0 \leq i < i_0 $.
In this case, $G(I(i)) = I(i+p_1)-R$, $G[I(i)] = u_{k+1}$, 
$\pi (I(i) \cap M) = \pi (G(I(i)) \cap M) = M_{k+1}$. 
Thus we have the hiding property.

\item Finally, If $i \geq (i_0+1)(p_1+q_1)$, then 
$G(I(i)) = I(i + p_1)-R$, $G[I(i)] = \mathrm{id}$ and 
$\pi (I(i) \cap M) =  \pi (I(i+p_1) \cap M) =M_{i_0}$.
Since $(u_{i_0}, M_{i_0})$ is $R$-periodic, 
we have the hiding property. 
\end{itemize}
Thus the construction is completed.
\end{proof}

\section{Construction of quantum leap (II)} \label{s.qua2}
Now we start the construction on the orbit circle. 
We need to do two things: Proof of the existence of 
initial pair (Proposition \ref{p.initiator}) and 
construction of the the $1/\omega$-equivalent sequence 
from the $\omega$-initial pair to the 
$R$-translation pair (Proposition \ref{p.elysian}).

\subsection{Notations}\label{s.notrot}
We prepare some notations on the intervals in 
$S^1 := \mathbb{R} / (1/q_1) \mathbb{Z}$. 

We put $I(i) : = [iR, (i+1)R]$
and $J(i) : = [ 1/q_1 -(i+1)R, 1/q_1 - iR]$. 
$\{I(i) \}$ divides $S^1$ into $\tau = 1/(q_1R)$ intervals 
and $I(i) = J(\tau -i-1)$ for $0 \leq i < \tau$.
We also have $I(0) -R = J(0)$.
In this section \ref{s.qua2},
we denote by $\pi$ the projection $\pi : [0, \tau R] \to [0, R)$
under the identification $x \sim y$ if $x-y \in R\mathbb{Z}$.

Finally, we fix some notations on the geometry eater. 
Remember that in the previous section we fixed a 
geometry eater $f$ for the $\omega$-template.
By definition, for every $i =0, \ldots, \omega -2$, 
$h^{\xi}(\mathcal{T}_i)$ are $\Omega$-equivalent 
to $\mathcal{T}_{\omega -1}$. We denote 
the maps which give this equivalence by $\alpha_i$
and $\beta_i$, that is, they satisfy $\mu(\alpha_i)$
$\mu(\beta_i) < \Omega$,
$\mathcal{T}_{\omega -1} \subset \alpha_i(h^{\xi}(\mathcal{T}_i))$
and 
$h^{\xi}(\mathcal{T}_i) \subset \beta_i(\mathcal{T}_{\omega -1})$.

\subsection{Initial pair: Constructive argument}

Let us construct the initial pair.
In other words, let us prove Proposition \ref{p.initiator}.

\begin{proof}[Proof of Proposition \ref{p.initiator}]
First, we define $u_{\mathrm{rot}} : S^1 \to S^1$ by
 $u_{\mathrm{rot}} (x) := x - R$. 
We define $u_I$ by performing perturbation to 
$u_{\mathrm{rot}}$ as follows:
\begin{enumerate}
\item For each $J(i)$ ($i = 0, \ldots, \xi-1$), 
we perturb $u_I[J(i)] = h$ ($h$ is the geometry eater renormalized to
$[0, R]$ see section \ref{s.geoe}). 
\item $u_I[J(\xi)] = \alpha_0$.
\item Except above, we do not do any perturbation.
\end{enumerate}

Note that since $\tau \geq  \xi + \omega +2$,
$J(\xi) = I (\tau -1 -\xi)$ is ``on the right of'' $I(\omega -1)$. 
Now we construct $M_I$.

\begin{itemize}
\item First, we define a region $M_t \subset [0, \omega R]$.
to be the $\omega$-template
$\mathcal{T} := \coprod T_i \subset [0, \omega]$ 
renormalized to this interval. 
\item Next, we define 
a region $M' \subset [1/q_1 - R(\xi +1), 1/q_1] = 
\cup_{i=0}^{\xi} J(i)$ 
by $M' := \cup_{i=1}^{\xi+1} u_{I}(I(0) \cap M_t)^i$.
Note that $J(\xi) = I (\tau -\xi -1)$.
\item On $[R\omega, R(\tau - \xi -1)] 
= \cup_{i=\omega}^{\tau - \xi -2 } I(i)$, we define 
$M_s \subset [R\omega, R(\tau - \xi -1)]$
such that for every $i = \omega, \ldots, \tau -\xi -2$,
we have $\pi(I(i) \cap M_s) = \mathcal{T}_{\omega -1}$.
Remember that $\tau \geq \xi + \omega +2$.
\item Finally, put $M_I = M' \cup M_t \cup M_s$.
\end{itemize}

Let us see that $(u_I, M_I)$ satisfies the hiding property.
In fact, for $I(i)$ in $[0, \omega R]$ it comes from the definition of 
template. The hiding property on 
$[1/q_1 - R(\xi +1), 1/q_1]$ comes from the fact that 
$M'$ is constructed by taking the images of $M_t \cap I(0)$ 
under $u_I$.
The hiding property on $[R\omega, R(\tau - \xi -1)] $  is easy.
 
We need to check the hiding property from 
$J(\xi) = I (\tau - \xi - 1 )$ to $J(\xi +1) = I (\tau - \xi - 2 )$.
To see this, let us trace $u_I^{i}(M_I \cap I(0))$. Note that 
$\pi(u_I^{i}(M_I \cap I(0))) = h^{i-1}(\mathcal{T}_0)$ 
for $i = 1, \ldots, \xi +1$
(more precisely, $T_0$ renormalized to $[0, R]$) .
Thus, in particular for $i = \xi +1$, 
we have $\pi(u_I^{\xi}(M_I \cap I(0))) = h^{\xi}(T_0)$.
Since we have $(u_I)[J(\xi)] = \alpha_0$,
we can see that $u_I$ has the hiding property 
from $J(\xi)$ to $J(\xi +1)$.

We can check the conditions on $\nu$ from 
the definition of the template.
Finally, we can check $\mu(u_I) < 1/\omega $ and 
its pasting property easily.
\end{proof}

\begin{rem}
For the better understanding of the proof, 
a figure like Figure \ref{fig.aliint} would be helpful.
The long segment is the $S^1$. It is divided into 
$I(i)$. The arrows shows the behavior of $u_I$.
It maps each $I(i)$ to $I(i-1)$.  
\begin{figure}
\centering
\includegraphics{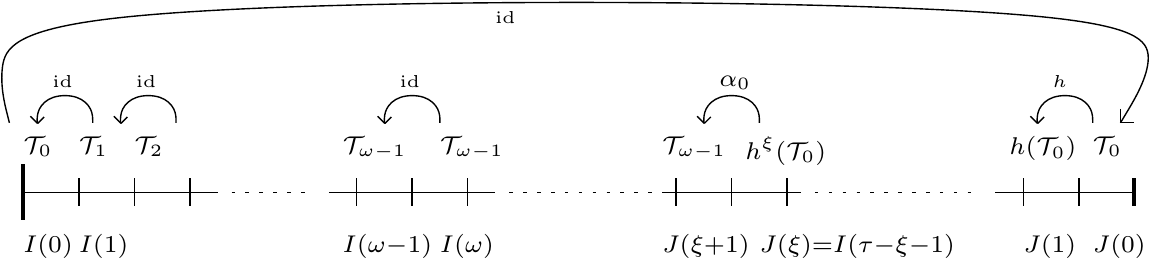}
 \caption{Alignment of $M_I$}.
\label{fig.aliint}
\end{figure}
\end{rem}

\subsection{Killing geometry}\label{s.kilgeo}

From $(u_I, M_I)$, we construct a $1/\omega$-equivalent 
sequence to decrease the geometric complexity 
in the template region.
First, we give the definition of our destination of this section.

\begin{defin}
Let $( u_p, M_p)$ be as follows;
\begin{itemize}
\item $u_p = x -R$.
\item $M_p$ is the (unique) region which satisfies the following:
for every $i$,  $\pi(M_p \cap I(i)) = T_{\omega -1}$.
\end{itemize}
\end{defin}

We prove the following.
\begin{lem}\label{l.georeal}
There exists a $1/\omega$-equivalent sequence of 
$R$-hiding pair $(u_j, M_j)$
$(j = 0,\ldots, \xi +1)$
which connects $( u_I, M_I)$ and $( u_p, M_p)$
satisfying the pasting property and $\mu (u_i) < 1/\omega$.
\end{lem}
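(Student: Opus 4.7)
The plan is to build the sequence by peeling off the $\xi + 1$ perturbations that distinguish $u_I$ from the pure rotation $u_p = u_{\mathrm{rot}}$, one per equivalence step. Recall from Section~\ref{s.notrot} that $u_I$ agrees with $u_{\mathrm{rot}}$ except on the intervals $J(0),\ldots,J(\xi-1)$, where the geometry eater $h$ is inserted, and on $J(\xi)$, where the map $\alpha_0$ is used. Since there are exactly $\xi + 1$ such perturbations, the length of the sequence matches the one required in the statement. In parallel, the regions $M_j$ must be simplified so that the complex template geometry of $M_I$ on $[0,\omega R]$ is gradually replaced by the uniform $\mathcal{T}_{\omega-1}$ pattern defining $M_p$.

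Concretely, at step $j$ I would remove the outermost remaining perturbation of $u_{j-1}$, starting with $\alpha_0$ on $J(\xi)$ and then stripping the $h$'s from $J(\xi-1)$ down to $J(0)$, and adjust $M_j$ so that on the affected interval and its forward $u_{\mathrm{rot}}$-image the region becomes the uniform $\mathcal{T}_{\omega-1}$. The connecting maps $U_{j-1}$ and $V_{j-1}$ between $(u_{j-1}, M_{j-1})$ and $(u_j, M_j)$ are then taken as compositions of a single application of the geometry eater $h$, which satisfies $\mu(h) < \Omega$, with an adjustment map provided by Lemma~\ref{l.connect} that absorbs the residual $\Omega$-equivalence between $h^{k}(\mathcal{T}_0)$ and $h^{k+1}(\mathcal{T}_0)$. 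Both factors have $\mu < \Omega$, so Lemma~\ref{l.decsmall} together with the choice of $\Omega$ in Section~\ref{howlambda} forces the composition to satisfy $\mu < 1/\omega$. The pasting property for each $u_j$ and for the connecting maps is straightforward to maintain, because every modification can be localized on the intervals $J(i)$, which are bounded away from a small neighborhood of the base points $0$ and $R$ on $S^1$ where pasting is tested.

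The main technical obstacle is verifying, at each intermediate stage, the inclusion $M_j \subset u_j(M_j)$ interval by interval. On intervals unaffected by the current step this is inherited by induction from $(u_{j-1}, M_{j-1})$; on the affected interval it reduces to combining the geometry eater's defining property that $h^\xi(\mathcal{T}_i)$ is $\Omega$-equivalent to $\mathcal{T}_{\omega-1}$ with the observation that the partial iterates $h^k$ transport the template hiding structure in a compatible way, via the auxiliary maps $\alpha_i$, $\beta_i$ introduced in Section~\ref{s.notrot}. The endpoint cases $j=0$ and $j = \xi + 1$ are covered directly, respectively by Proposition~\ref{p.initiator} and by the construction of $(u_p, M_p)$.
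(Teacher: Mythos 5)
Your high-level plan (peel off the $\xi+1$ perturbations that distinguish $u_I$ from $u_{\mathrm{rot}}$, using the geometry eater and Lemma~\ref{l.decsmall} to keep $\mu < 1/\omega$) is in the right spirit, and the count of steps is correct. However, the order in which you remove the perturbations is fatally wrong, and this is not a cosmetic issue.

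You propose to first delete $\alpha_0$ from $J(\xi)$ and only afterwards strip the $h$'s from $J(\xi-1)$ down to $J(0)$. But $\alpha_0$ is exactly what reconciles the chain $h^i(T_0)$ living on $J(0),\ldots,J(\xi)$ with the uniform tail $\mathcal{T}_{\omega-1}$ living beyond $J(\xi)$. If you replace $u[J(\xi)] = \alpha_0$ by the rotation and set $M\cap J(\xi)$ and $M\cap J(\xi+1)$ both to $\mathcal{T}_{\omega-1}$, the hiding condition $M\cap J(\xi) \subset u(M\cap J(\xi-1))$ demands $\mathcal{T}_{\omega-1} \subset h(h^{\xi-1}(T_0)) = h^\xi(T_0)$, which is false: $h^\xi(T_0)$ is only $\Omega$-\emph{equivalent} to $\mathcal{T}_{\omega-1}$, not a superset of it. So your very first step breaks the hiding property, and you have no mechanism to repair it without reintroducing $\alpha_0$ somewhere — which is precisely what the actual proof does. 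In the paper's construction, $\alpha_0$ is never removed during the $\xi$ intermediate steps; it is \emph{slid} from $J(\xi)$ to $J(0)$ (one interval per step), while \emph{simultaneously} the geometry eater is applied once to the whole template region $[0,\omega R]$, giving $\pi(M_j\cap I(i)) = h^j(\mathcal{T}_i)$. Only after the template has been fully eaten (at $j=\xi$, where $\alpha_0$ sits on $J(0)$) is there a final step using the full family $\alpha_i,\beta_i$ to reach $(u_p, M_p)$.

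A second, related gap: your prescription "adjust $M_j$ on the affected interval and its forward $u_{\mathrm{rot}}$-image" does not touch $[0,\omega R]$, yet the template geometry there is what actually has to be annihilated; leaving it alone would mean your terminal pair is not $(u_p, M_p)$. You acknowledge the template must be simplified "in parallel," but the mechanism you describe does not accomplish it. The paper's connecting maps $U_j$ are also more elaborate than a single $h$ composed with an adjustment: on the $J(i)$ intervals inside the $h$-zone they have shape $h^2$, and near the sliding $\alpha_0$ they have shapes $\alpha_0\circ h$ and $\alpha_0$; $\Omega$ is chosen precisely so that such two-fold compositions stay below $1/\omega$. Your invocation of Lemma~\ref{l.decsmall} is the right tool, but it must be applied to these specific compositions, not to a vaguely described "adjustment map" from Lemma~\ref{l.connect}.
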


\begin{proof}
First, we construct $(u_j, M_j)$ ($j = 0,\ldots, \xi $) as follows:
\begin{itemize}
\item The definition of $u_j$: $u_j$ is the perturbation 
of $u_{\mathrm{rot}}$ such that;
\begin{itemize}
\item On $J(i)$ for $i = 0,\ldots, \xi -1 - j$, 
we define $u_j[J(i)] = h$. 
\item On $J(\xi - j)$, 
we define $u_j[J(\xi - j)] = \alpha_0$. 
\item Otherwise we put $u_j = u_I = u_{\mathrm{rot}}$.
\end{itemize}
\item $M_j$ is the (unique) region which satisfies the following:
\begin{itemize}
\item $\pi(M_j \cap I(i)) = h^j(\mathcal{T}_i)$ for $i = 0, \ldots, \omega -2$.
\item On $J(i)$ for $i =0, \ldots, \xi -j$, we have 
 $\pi(M_j \cap J(i)) = h^{j+i}(\mathcal{T}_0)$.
\item For other $I(i)$, we have
 $\pi(M_j \cap I(i)) = \mathcal{T}_m$.
\end{itemize}
\end{itemize}
We can check that $(u_j, M_j)$ defines a hiding pair 
by the similar argument for the case of $M_r$. 
Note that $(u_0, M_0) = (u_I, M_I)$.

Now we construct the maps $\{U_j\}, \{V_j\}$ 
which connects $(u_j, M_j)$
and $(u_{j+1}, M_{j+1})$ as follows
($j=0,\ldots, \xi -1$):
\begin{itemize}
\item Definition of $U_j$. $U_j$ is the perturbation of $u_{\mathrm{rot}}$ satisfying the following:
\begin{itemize}
\item On $I(i)$ for $i = 0, \ldots, \omega -2$, 
we perturb $U_j[I(i)] = h$.
\item If $j \neq \xi - 1$,
then on $J(i)$ for $i = 0, \ldots , \xi -j -2$,  
we perturb $U_j[J(i)] = h^2$.
\item On $J(\xi -j -1)$,  
we perturb $U_j[J(\xi -j -1)] = \alpha_0 \circ h$.
\item On $J(\xi - j)$, 
we perturb $U_j[J(\xi - j)] = \alpha_0$.
\item Otherwise, no perturbation.
\end{itemize}
\item Definition of $V_j$. $V_j$ is the perturbation of 
$\mathrm{id}$ satisfying the following:

\begin{itemize}
\item On $I(i)$ for $i = 0, \ldots, \omega -2$, 
we perturb $V_j[I(i)] = h^{-1}$.
\item On $J(i)$ for $i = 0, \ldots, \xi -j -1$,  
we perturb $V_j[J(i)] = h^{-1}$.
\item On $J(\xi -j)$,   
we perturb $V_j[J(\xi -j)] = \beta_0$.
\item Otherwise, $V_j = \mathrm{id}$.
\end{itemize}
\end{itemize}

By definition, we can check that 
$U_i$, $V_i$ satisfy pasting property and 
$\mu(U_i), \mu(V_i) < 1/\omega$ using  
$\mu(h), \mu(\alpha_0) < \Omega$.
It is not difficult to check that these maps give the hiding property.
The best way is to draw the following type of diagrams.
\begin{center}
$ \hspace*{30pt}\xymatrix @R=.3pc{
\cdots  &
\ar[l]  h^j(\mathcal{T}_0) &
\ar[l]_{\mathrm{id}} \ar[lddd]_{h}h^j(\mathcal{T}_0) &
\ar[l]_{\mathrm{id}} \ar[lddd]_{h} h^j(\mathcal{T}_1) & 
\ar[l]|{\ldots\ldots}  \, h^j(\mathcal{T}_{\omega -2}) \, &
\ar[l]_{\mathrm{id}} \ar[lddd]_{h} \mathcal{T}_{\omega -1} &
\ar[l] \cdots &
\\
&&&&&&&\\
&&&&&&&\\
 \cdots  &
\ar[l] \ar[uuu]^{h^{-1}} h^{j+1}(\mathcal{T}_0) &
\ar[l]_{\mathrm{id}} \ar[uuu]^{h^{-1}} h^{j+1}(\mathcal{T}_0) &
\ar[l]_{\mathrm{id}} \ar[uuu]^{h^{-1}} h^{j+1}(\mathcal{T}_1) & 
\ar[l]|{\ldots\ldots } \ar[uuu]^{h^{-1}} h^{j+1}(\mathcal{T}_{\omega -2}) &
\ar[l]_{\quad \mathrm{id}} \ar[uuu]^{h^{-1}} \mathcal{T}_{\omega -1} &
\ar[l] \cdots &
\\
 & J(0) &I(0) & I(1) &   I(\omega -2) & I(\omega -1) &  & & 
}$
\end{center}

This diagram explains how $M_j$ and $M_{j+1}$ behaves 
under $u_j$, $u_{j+1}$, $U_j$ and $V_j$ near
$[0, R\omega] = \cup_{i=0}^{\omega -1} I(i)$.
In this diagram, the first row represents how $(M_j, u_j)$ looks like.
The arrows represent the behavior of $u_j$.
The second row represents how $(M_{j+1}, u_{j+1})$ looks like.
The third line explains the address of each region. 
By chasing each arrow, we can check the hiding property.

Next diagram explains the behavior near 
$J(0) \cup J(1) \cup J(2)$.
\begin{center}
$ \xymatrix @R=.3pc{
\cdots  &
\ar[l]  h^{j+2}(\mathcal{T}_0) &
\ar[l]_{h} \ar[lddd]_{h^2}h^{j+1}(\mathcal{T}_0) &
\ar[l]_{h} \ar[lddd]_{h^2} h^j(\mathcal{T}_0) & 
\ar[l] \cdots  \\
&&&& \\
&&&& \\
\cdots  &
\ar[l]  \ar[uuu]^{h^{-1}} h^{j+3}(\mathcal{T}_0) &
\ar[l]_{h} \ar[uuu]^{h^{-1}} h^{j+2}(\mathcal{T}_0) &
\ar[l]_{h} \ar[uuu]^{h^{-1}}  h^{j+1}(\mathcal{T}_0) & 
\ar[l]  \cdots  \\
  &  J(2) &J(1) & J(0) &  
}$
\end{center}
Again, the chase of arrows shows the hiding property.

The following one is around 
$J(\xi + j -2) \cup J(\xi + j -1) \cup J(\xi + j ) \cup J(\xi + j +1)$.
\begin{center}
$\xymatrix @R =0.3pc{
\cdots  &
\ar[l] \mathcal{T}_{\omega -1} &
\ar[l]_{\alpha_0} \ar[lddd]_{\alpha_0 } h^{\xi}(\mathcal{T}_0) &
\ar[l]_{h} \ar[lddd]_{(\alpha_0 \circ h)}h^{\xi -1}(\mathcal{T}_0) &
\ar[l]_{h} \ar[lddd]_{h^2} h^{\xi -2}(\mathcal{T}_0) & 
\ar[l]  \cdots  \\
&&&&& \\
&&&&& \\
\cdots  &
\ar[l] \ar[uuu]^{\mathrm{id}}  \mathcal{T}_{\omega -1} &
\ar[l]_{\mathrm{id}}  \ar[uuu]^{\beta_0} \mathcal{T}_{\omega -1} &
\ar[l]_{\alpha_0}  \ar[uuu]^{h^{-1}}  h^{\xi}(\mathcal{T}_0) &
\ar[l]_{h}  \ar[uuu]^{h^{-1}}  h^{\xi-1}(\mathcal{T}_0) &
\ar[l]  \cdots \\ 
 & J(\xi + j +1) &J(\xi + j ) & J(\xi + j -1) &  J(\xi + j -2) &
}$
\end{center}

The hiding property outside these regions are easy.
Finally, we see that $(u_{\xi}, M_{\xi})$ is $1/\omega$-equivalent
to $(u_{p}, M_p)$. Indeed, we just need to construct $U$ and $V$
which gives the $1/\omega$-equivalence.
It can be constructed as follows;

\begin{itemize}
\item Definition of $U$. $U$ is the perturbation of 
$u_{\mathrm{rot}}$ satisfying the following:
\begin{itemize}
\item On $I(0)$ and $J(0)$, for $i = 0, \ldots, \omega -2$, 
we perturb $U[I(0)] = U[J(0)] = \alpha_0$.
\item On $I(i)$ for $i = 1,\ldots \omega -2$, 
we perturb $U[I(i)] = \alpha_i$.
\item Otherwise, no perturbation.
\end{itemize}
\item Definition of $V$. $V_j$ is the perturbation of 
$\mathrm{id}$ satisfying the following:
\begin{itemize}
\item On $I(i)$ for $i = 0, \ldots, \omega -2$, 
we perturb $V_j[I(i)] = \beta_{i}$.
\item On $J(0)$,  
we perturb $V[J(0)] = \beta_0$.
\item Otherwise, $V_j = \mathrm{id}$.
\end{itemize}
\end{itemize}
It is easy to check that this gives the equivalence.

\end{proof}

\section{Quantum leap: The other case}\label{s.anoth}
In this section, we briefly discuss the construction of 
the quantum leap in the case $q_1 > q_2$. 
In fact, the construction itself is similar to the previous one, 
so we just see the difference between them.

Let $\omega$ be given. Then we fix $\lambda$ same value 
as was in the Section \ref{s.qua}. 
Then, take $p_1/q_1 >  p_2/q_2$ with $p_1/q_1 -  p_2/q_2 < \lambda /q_2$.
Take a piecewise linear map $g_1$ as follows.
\[
g_1(x) =
\begin{cases}
x + r_1          & (x \leq -\omega R ) \\
(1-1/\omega)x +r_1  & (-\omega R  \leq x \leq  0)   \\
x + r_2        & (x \geq 0)
\end{cases}
\]
We also take $f_1(x) := x -1$.

Let us see the action of $g_1$ and $f_1$. 
To see it, we choose the division $I(i)$ as was in the previous 
section modifying $I(i) := [i/q_2, (i+1)/q_2]$. 
We put $R = p_1/q_1 -  p_2/q_2$.
Then on  $\{ x \leq -\omega R\}$, we have 
$f_1(I(i)) = I(i - q_2)$ and $g_1(I(i)) = I(i+p_2) + R$.
Thus in this case the ``local action" of $g_1$ is translation of size $R$ 
to the right. In the previous case the direction was opposite.

Now, as was in the previous case we fix an initial pair $(u_0,  M_0)$.
In this case, since the direction is opposite, we need to 
choose everything to be the "mirror images" of the previous ones. 
Then, we take the $1/\omega$-equivalent sequences $(u_i, M_i)$ and 
$(U_i, V_i)$
so that the last one is the $1/q_1$ periodic.
The construction is completely same 
except the point that we take the mirror image of them. 

Then we modify the initial map gradually.
More precisely, we take the following grouping:
\begin{itemize}
\item $H\{ j \} := \{k \in \mathbb{Z} \mid j(p_2+q_2) \leq k < (j+1)(p_2 + q_2) \} $,
\item $H_l\{ j \} := \{k \in \mathbb{Z} \mid j(p_2+q_2) \leq k < j(p_2 + q_2) +p_2 \} $,
\item $H_r\{ j \} := \{k \in \mathbb{Z} \mid j(p_2+q_2) +p_2 \leq k < (j+1)(p_2 + q_2) \} $.
\end{itemize}
Then, give a perturbation respectively as was in Section \ref{c.qua1},
which finishes the proof.

\vskip 5mm
\begin{tabular}{ll}
Katsutoshi Shinohara & 
\\
\footnotesize{herrsinon@07.alumni.u-tokyo.ac.jp}
 &  \\
Institut de Math\'ematiques de Bourgogne,   CNRS - UMR 5584 
\\
Universit\'e de Bourgogne 9 av. A. Savary,& \\
21000  Dijon, France. &
\end{tabular}

\end{document}